\numberwithin{equation}{section}
\title{Proof of Swiss Cheese Version of Deligne's Conjecture}
\author{V.A. Dolgushev, D.E. Tamarkin, and B.L. Tsygan}
\address{}
\newcommand{\Ab}{{\bf Ab}}
\newcommand{\Top}{{\bf Top}}
\newcommand{\sSets}{{\bf sSets}}
\newcommand{\De}{{\bigtriangleup}}
\newcommand{\D}{{\Delta}}
\newcommand{\pa}{{\partial}}
\newcommand{\J}{{\mathcal J}}
\newcommand{\vs}{\sigma}
\newcommand{\al}{\alpha}
\newcommand{\ga}{\gamma}
\newcommand{\de}{\delta}
\newcommand{\La}{\Lambda}
\newcommand{\Te}{\Theta}
\DeclareMathOperator{\SC}{{\text{SC}}}
\newcommand{\I}{{\mathcal I}}
\newcommand\OO{{\text{\O}}}
\newcommand\cO{\mathcal{O}}
\newcommand\bbR{{\mathbb R}}
\newcommand\gf{{\mathbf{k}}}
\DeclareMathOperator\full{{\mathbf{full}}}
 \newcommand\bu{{\mathbf 1}}
 \newcommand\ts{\tilde{s}}
 \newcommand\wt{\,\widetilde{\mathbf t}\,}
 \newcommand\tI{\widetilde{I}}
 \newcommand\Ord{{\rm O r d}}
 \newcommand\triv{{\mathbf{ triv}}}
 \newcommand\GJ{{\mathbf{ GJ}}}
 \newcommand\FM{{\mathbf{FM}}}
 \newcommand\FN{{\mathbf{ FN}}}
\renewcommand\a{{\mathfrak a}}
\renewcommand\c{{\mathfrak c}}
\renewcommand\u{{\mathfrak u}}
\newcommand\C{{\mathcal C}}
\renewcommand\top{{\mathbf{ top}}}
\renewcommand\S{{\mathcal S}}
\newcommand\Id{\text{Id}}
\newcommand\chrom{{X\rho}}
\renewcommand\vec{\overrightarrow}
\newcommand\seq{{\mathbf{ seq}}}
\newcommand\se{{\mathbf{ s}}\text{\O}}
\newcommand\sO{\se}
\newcommand\complexes{{\mathbf{ complexes}}}
\renewcommand\t{{\mathbf t}}
\newcommand\bighat{\tilde}
\newcommand\nat{{\Bbb N}}
\newcommand\bbZ{{\Bbb Z}}
\renewcommand\I{{\mathcal I}}
\DeclareMathOperator\des{{\mathbf{ des}}}
\DeclareMathOperator \sym{{\mathbf{ sym}}}
\newcommand\op{\OO}
\newcommand\Sets{{\mathbf{Sets}}}
\DeclareMathOperator\colim{{\mathbf{
colim}}}
\DeclareMathOperator\hocolim{{\mathbf{
hocolim}}}
\newcommand\pt{{\mathbf{ pt}}}
\newcommand\sco{|\se|}
\newcommand\scoR{{\mathbf{ braces}}}
\newcommand\scseq{{\stackrel{{\mathbf{ SC}}}{\mathbf{
\seq}}}}
\newcommand\as{{\mathbf{ assoc}}}
\newcommand\ord{{\mathbf{ ord}}}
\newcommand\R{{\mathcal R}}
\renewcommand\k{{\mathbf k}}
\newcommand\opp{{\text{op}}}
\renewcommand\sc{{|\scseq|}}
\newcommand\scR{{\mathbf{ br}}}
\newcommand\ttrees{\text{\bf 2-trees}}
\newcommand\bul{{\bullet}}
\DeclareMathOperator\conf{{\mathbf{Conf}}}
\begin{document}
\newtheorem{theorem}{Theorem}[section]
\newtheorem{Axiom}[theorem]{Axiom}
\newtheorem{Claim}[theorem]{Claim}
\newtheorem{Conjecture}[theorem]{Conjecture}
\newtheorem{Lemma}[theorem]{Lemma}
\newtheorem{Corollary}[theorem]{Corollary}
\newtheorem{Proposition}[theorem]{Proposition}
\newtheorem{Theorem}[theorem]{Theorem}
\newtheorem{Example}[theorem]{Example}
\newtheorem{Definition}[theorem]{Definition}
\newtheorem{Definition-Proposition}[theorem]
{Definition-Proposition}
\newtheorem{Condition}[theorem]{Condition}

\maketitle

\begin{center}
{\it To the beautiful country of Confoederatio Helvetica.}
\end{center}

\begin{abstract}
For an associative algebra $A$ we consider the pair
``the Hochschild cochain complex $C^{\bul}(A,A)$ and
the algebra $A$''. There is a natural 2-colored operad
which acts on this pair. We show that this operad is
quasi-isomorphic to the singular chain operad
of Voronov's Swiss Cheese operad. This statement
is the Swiss Cheese version of the Deligne conjecture 
formulated by M. Kontsevich in \cite{Km}.
\end{abstract}

~\\
{\it 2000 MSC: 18D50, 16E40.}

\tableofcontents

\section{Introduction}
The interest to various versions \cite{BFN}, \cite{BF}, \cite{HKV},
\cite{RK},
\cite{KS}, \cite{K-Soi1}, \cite{Markl},
\cite{M-Smith}, \cite{Dima-d}, \cite{TZ}, \cite{Bruno}
of the Deligne conjecture on Hochschild complex
is motivated by
generalizations \cite{DTT}, \cite{Cepochki}, \cite{Dima-Proof}, \cite{TT}, 
\cite{Boris} of the famous Kontsevich's formality theorem \cite{K}.
Thus, in recent preprint \cite{K-Soi1} M. Kontsevich
and Y. Soibelman proposed a proof of
the chain version of Deligne's conjecture for
Hochschild complexes of
an $A_{\infty}$-algebra. This is an important
step in proving the formality for the homotopy
calculus algebra of Hochschild (co)chains
\cite{Cepochki}.

Let $A$ be an associative algebra and $C^{\bul}(A,A)$
be the Hochschild cochain complex of $A$\,.
The original version of Deligne's conjecture says that
the operad of natural operations on $C^{\bul}(A,A)$
is quasi-isomorphic to the singular chain operad
of the operad $E_2$ of little discs \cite{Board-V}, \cite{May}.
This statement is not very precise because there are different
choices of what one may call ``the operad of natural operations
on $C^{\bul}(A,A)$\,.'' One may use the so-called minimal
operad of M. Kontsevich and Y. Soibelman \cite{KS} or the
operad of braces \cite{Ezra}, \cite{Gruzia} as in \cite{M-Smith}
and \cite{Sasha1} or the ``big operad'' of M. Batanin and
M. Markl \cite{Misha-Martin}. Due to works of various people
\cite{Misha-Martin}, \cite{BF}, \cite{KS}, \cite{M-Smith},
\cite{dgcat}, and \cite{Sasha1} it is now known that all
these operads are quasi-isomorphic to the singular chain operad
of the operad $E_2$\,.

The topological operad $E_2$ of little discs
admits a natural extension to a 2-colored
topological operad which is called the
Swiss Cheese operad $\SC_2$\,. This operad was
proposed by A. Voronov in \cite{Vor}.

In \cite{Vor} A. Voronov also described
the homology operad $H_{-\bul}(\SC_2)$\,.
More precisely, he showed that
an algebra over the operad $H_{-\bul}(\SC_2)$
is a pair of graded vector spaces
$(V_1, V_2)$, where $V_1$ is a Gerstenhaber
algebra\footnote{In particular, it means that $V_1$ is a
commutative algebra.},
and $V_2$ is an associative algebra equipped
with a module structure over
the commutative algebra $V_1$
\begin{equation}
\label{mod-struc}
V_1 \otimes V_2 \to V_2\,,
\end{equation}
satisfying the following condition
\begin{equation}
\label{uslov}
(u_1 \cdot v_1) \dots (u_n \cdot v_n) =
(u_1 \dots u_n) \cdot (v_1  \dots v_n)\,,
\end{equation}
where $u_i\in V_1$, $v_i \in V_2$, and
for the multiplication of the corresponding
elements we use either the associative
algebra structure
in $V_2$ or the commutative algebra
structure in $V_1$\,.

It is not hard to prove the following
proposition:
\begin{Proposition}
\label{HH-swiss}
If $A$ is an associative algebra
and $HH^{\bul}(A,A)$ is its Hochschild
cohomology then the pair
$(HH^{\bul}(A,A), A)$ forms an algebra over
the operad $H_{-\bul}(\SC_2)$\,.
\end{Proposition}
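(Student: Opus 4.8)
The plan is to read off the proposition from A.\ Voronov's description of algebras over $H_{-\bul}(\SC_2)$ recalled above. According to that description one has to endow $V_1:=HH^{\bul}(A,A)$ with a Gerstenhaber algebra structure, endow $V_2:=A$ with an associative product together with a module structure \eqref{mod-struc} over the graded commutative algebra underlying $V_1$, and check the single compatibility relation \eqref{uslov}; no further axiom enters, so the whole statement reduces to supplying these pieces of structure and verifying \eqref{uslov}.

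For $V_1$ I would take the classical Gerstenhaber algebra structure on Hochschild cohomology due to Gerstenhaber: the cup product of cochains, which on cohomology is associative and graded commutative, together with the Gerstenhaber bracket, which on cohomology is a Lie bracket of degree $-1$ compatible with the cup product through the Poisson identity. For $V_2$ I take $A$, placed in degree $0$, with its given associative multiplication. To produce \eqref{mod-struc} one observes a grading constraint: the operad $H_{-\bul}(\SC_2)$ is graded and the binary operation realizing \eqref{mod-struc} has degree $0$ (it comes from $H_0$ of the relevant component of $\SC_2$), hence preserves degrees; since $V_2=A$ sits in degree $0$ while $HH^p(A,A)$ sits in degree $p$, the map \eqref{mod-struc} must vanish on $HH^p(A,A)\otimes A$ for $p>0$. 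On the remaining piece $HH^0(A,A)=Z(A)$, the center of $A$, the cup product restricts to the multiplication of $Z(A)$, and I would define \eqref{mod-struc} to be the restriction of the multiplication of $A$, i.e.\ $z\otimes a\mapsto za$. Since $Z(A)$ is central in $A$, this is a genuine module structure of the graded commutative algebra $(HH^{\bul}(A,A),\cup)$ on the associative algebra $A$, by the identity $z_1(z_2 a)=(z_1z_2)a$.

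It remains to verify \eqref{uslov}, and the same grading observation does the bookkeeping: both sides of \eqref{uslov} lie in degree $\sum_i\deg u_i$, hence are zero unless every $u_i$ lies in $HH^0(A,A)=Z(A)$; writing $u_i=z_i\in Z(A)$ and $v_i=a_i\in A$, the left-hand side of \eqref{uslov} becomes $(z_1a_1)\cdots(z_na_n)$ and the right-hand side becomes $(z_1\cdots z_n)(a_1\cdots a_n)$, and these agree because each $z_i$ commutes with everything in $A$ — it is enough to see this for $n=2$, where $(z_1a_1)(z_2a_2)=z_1z_2\,a_1a_2$. This checks all the axioms, so $(HH^{\bul}(A,A),A)$ is an algebra over $H_{-\bul}(\SC_2)$. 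Everything here is routine; the one point that needs attention — and the closest thing to an obstacle — is keeping the homological grading on $H_{-\bul}(\SC_2)$ aligned with the cohomological grading on $HH^{\bul}(A,A)$, since it is exactly this alignment that collapses both the module action \eqref{mod-struc} and the relation \eqref{uslov} onto the elementary fact that $Z(A)$ acts on $A$ by multiplication.
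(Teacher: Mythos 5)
Your proposal is correct and follows essentially the same route as the paper: equip $HH^{\bul}(A,A)$ with Gerstenhaber's structure, let $HH^{0}(A,A)=Z(A)$ act on $A$ by multiplication with the action vanishing in positive degrees, and observe that the compatibility relation (\ref{uslov}) reduces to centrality of $Z(A)$. The only (harmless) difference is that you justify the vanishing of (\ref{mod-struc}) in positive degrees by a degree count, whereas the paper simply decrees it as part of the definition of the module structure.
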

\begin{proof}
Indeed the associative algebra structure on
$A$ is already given. $HH^{\bul}(A,A)$ is
a Gerstenhaber algebra due to \cite{Ger}.
Finally, to define the module
structure on $A$ over the commutative algebra $HH^{\bul}(A,A)$ we
use the fact that the
zeroth Hochschild cohomology $HH^0(A,A)$
is the center $Z(A)$ of $A$. Namely,
we declare
$$
z \cdot a =
\begin{cases}
z \, a \,, {\rm if} ~ z \in HH^0(A,A) = Z(A)\,, \cr
0\,, ~ {\rm otherwise}\,.
\end{cases}
$$
Equation (\ref{uslov}) is nontrivial only
when $u_i\in HH^0(A,A)$. In this case
the required condition
is automatically satisfied since $u_i$'s are
elements of the center $Z(A)$ of $A$\,.
\end{proof}

In this paper we prove the Swiss Cheese version of
Deligne's conjecture which extends Proposition \ref{HH-swiss}
to the level of cochains.

To formulate this version of Deligne's conjecture we, first,
construct a 2-colored DG operad $\La$ of natural operations on
the pair $(C^{\bul}(A,A) ; A)$\,. Roughly speaking, this
operad is generated by the insertions
of a cochain into a cochain, the cup-product of cochains
and the insertions of elements of the algebra $A$ into
a cochain. The precise description of $\La$ is given in
Section \ref{operad-La}.

The main result of this paper is the following theorem
\begin{Theorem}
\label{main}
The 2-colored DG operad $\La$ of natural operations
on the pair $(C^{\bul}(A,A), A)$ is quasi-isomorphic to
the singular chain operad of Voronov's
Swiss Cheese operad $\SC_2$\,. The induced
action of the homology operad $H_{-\bul}(\SC_2)$
on the pair $(HH^{\bul}(A), A)$ recovers the one
from Proposition \ref{HH-swiss}\,.
\end{Theorem}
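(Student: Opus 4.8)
The plan is to follow the ``operadic chain'' strategy that proved the original Deligne conjecture, but carried out in the two-colored (Swiss Cheese) setting. First I would reduce the statement to a comparison of DG operads over $\gf$ rather than of topological operads: instead of the singular chain operad $C_{\bul}(\SC_2)$ directly, I work with the two-colored operad $\SCFM$ obtained from the Fulton--MacPherson--Axelrod--Singer compactification of the configuration spaces underlying $\SC_2$ (configurations of little discs in the upper half-plane together with marked points on the real line), and I pass to the chain operad $\mathbf{SCFM} = C_{\bul}(\SCFM,\gf)$ of its semialgebraic (or PA) chains. The space $\SCFM$ is a CW-like object with corners whose cells are indexed by ``two-colored trees'' (trees whose vertices are either ``disc'' vertices or ``half-plane'' vertices, with the combinatorics of $\SC_2$), and the chain operad $\SCFM$ is quasi-isomorphic to $C_{\bul}(\SC_2)$. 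This is the standard reduction and I would cite it rather than reprove it.

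Next I would build an explicit zig-zag of DG operad maps between $\La$ and $\SCFM$. The natural intermediary is a combinatorial two-colored operad $\KS$-type model — call it $\SCGJ$, the Swiss Cheese analogue of the Kontsevich--Soibelman / Gerstenhaber--Voronov ``brace'' operad — whose $\gf$-linear span is spanned by the two-colored trees that label the cells of $\SCFM$. On one side there is a map $\SCGJ \to \La$ sending each tree to the corresponding natural operation on $(C^{\bul}(A,A);A)$: disc vertices act by braces and cup product on Hochschild cochains exactly as in the classical Deligne conjecture, and half-plane vertices act by the insertion operations $C^{\bul}(A,A)^{\otimes k}\otimes A^{\otimes m}\to A$ that are built into the definition of $\La$ in Section \ref{operad-La}. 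On the other side there is a map $\SCGJ \to \SCFM$ given by the fundamental chains of the corresponding closures of strata, which is a quasi-isomorphism because the trees index a cell decomposition of $\SCFM$ and the boundary combinatorics match the operadic differential. So the theorem follows once I show that $\SCGJ \to \La$ is a quasi-isomorphism and that both maps are compatible on homology with Voronov's algebra structure from Proposition \ref{HH-swiss}.

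To prove $\SCGJ \to \La$ is a quasi-isomorphism I would argue color by color. In the ``open'' (half-plane) color one computes the homology of $\La$ acting on copies of $A$ only, which reduces to the cohomology of the operad governing associative algebras with a module over a commutative algebra; here I use the same spectral-sequence / filtration-by-number-of-brace-insertions device as in the classical proof, together with the explicit bar-type resolution computing natural operations on $C^{\bul}(A,A)$, to identify $H(\La)$ with $H_{-\bul}(\SC_2)$ as computed by Voronov. The key point is that filtering $\La$ by the ``arity in the closed color'' (the number of Hochschild-cochain inputs that get inserted into the algebra $A$), the associated graded is a direct sum of shifted copies of the classical Deligne operad tensored with the (acyclic, except in degree zero) complex computing the $Z(A)$-module structure, so the classical theorem of \cite{KS}, \cite{M-Smith}, \cite{Misha-Martin} bootstraps into the Swiss Cheese statement.

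I expect the main obstacle to be the half-plane boundary combinatorics of $\SCFM$: unlike the little-discs case, a stratum where several marked points on the real line collide with a cluster of discs in the half-plane degenerates to a lower stratum of the \emph{same} half-plane type (not to a product with a little-discs factor), so the chain-level operadic differential on $\SCGJ$ has ``mixed'' terms with no classical analogue, and one must check that the map to $\La$ intertwines these with the corresponding insertion-into-$A$ operations — equivalently, that Voronov's relation \eqref{uslov} holds on the nose at the cochain level up to the prescribed homotopies. Verifying this compatibility, and checking that the resulting zig-zag indeed induces Voronov's $H_{-\bul}(\SC_2)$-action of Proposition \ref{HH-swiss} on $(HH^{\bul}(A,A),A)$, is where the real work lies; the rest is an assembly of known two-colored operadic formalism and the classical Deligne conjecture.
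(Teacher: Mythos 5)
Your strategy is a genuinely different route from the paper's: you propose the Kontsevich--Soibelman/McClure--Smith style argument (a combinatorial brace-type intermediary $\SCGJ$ mapping on one side to $\La$ and on the other to chains of a Fulton--MacPherson compactification of $\SC_2$), whereas the paper never maps into chains of a compactified configuration-space operad at all. Instead it builds a contractible \emph{reduced} SC 2-operad $\scR$ inside the totalization $|\scseq|$, invokes Batanin's theorem that the symmetrization of a cofibrant resolution of the trivial SC 2-operad is weakly equivalent to $C_{-\bul}(\SC_2)$, and then proves (Theorem \ref{SymRbr-braces}) that this symmetrization maps quasi-isomorphically onto a reduced suboperad $\scoR$ of $|\se|\cong\La$. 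Configuration spaces and Fox--Neuwirth cells do appear in the paper, but only to verify contractibility of certain homotopy colimits over the posets $\J(\vs)$, not to produce a cellular chain model of a compactification.

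There is, however, a genuine gap in your proposal, and it sits exactly at the point this whole area is known to be treacherous. You assert that $\SCGJ\to\SCFM$ is a quasi-isomorphism ``because the trees index a cell decomposition of $\SCFM$ and the boundary combinatorics match the operadic differential.'' The tree-indexed strata of the Fulton--MacPherson compactification are not cells (already for little discs a codimension-zero stratum is a configuration space modulo translation and dilation, which is not contractible), so the tree complex is not the cellular chain complex of any CW structure on $\SCFM$. The finer decompositions one would reach for (Fox--Neuwirth, Getzler--Jones) are famously \emph{not} CW decompositions in the required sense --- the closure of a cell need not be a union of cells --- and repairing this defect is precisely what Batanin's symmetrization theorem, which the paper uses, was designed to do. So the step you lean on as ``standard'' is the step that fails, or at least requires an entirely different justification. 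Beyond that, your proposal explicitly defers the two remaining substantive points --- that $\SCGJ\to\La$ is a quasi-isomorphism (your filtration by arity in the closed color, with the claim that the associated graded reduces to the classical Deligne operad tensored with an acyclic complex, is plausible but unproved) and that the mixed half-plane boundary terms are intertwined with the insertion-into-$A$ operations --- and you yourself identify these as ``where the real work lies.'' As written, the proposal is a plan whose load-bearing claims are either unestablished or, in the case of the cell decomposition, incorrect as stated.
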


We prove this theorem using ideas from \cite{dgcat} and
Batanin's theorem \cite{Bat}
which identifies the homotopy type of Voronov's Swiss Cheese
operad with that of the symmetrization of a contractible cofibrant
Swiss Cheese type 2-operad. The required facts about 2-operads
are reviewed in Sections \ref{review},\ref{screview}

\subsection{Remarks on higher dimensional versions}
Voronov's Swiss Cheese operad
admits the obvious higher dimensional analogue
$\SC_d$ $(d \ge 2)$\,. This operad extends
the operad of $d$-cubes in the same way as the
operad $\SC_2$ extends the operad of
little disks. From this point of view,
Theorem \ref{main}
is a $2$-dimensional case of the following
conjecture formulated by M. Kontsevich in 
\cite{Km}: {\it the DG operad of natural operations
on the pair ``a $d$-algebra\footnote{Recall from \cite{GJ} that a $d$-algebra 
is an algebra over the
homology operad $H_{-\bul}(E_d)$ of the operad of
little $d$-cubes $E_d$\,.} 
and its Hochschild complex''
is quasi-isomorphic to the singular chain operad of
$\SC_{d+1}$\,. } 
In \cite[Section 2.5]{Km} M. Kontsevich also conjectures 
that the Hochschild cochain complex of a $d$-algebra 
is a final object 
in an appropriate category of ``Swiss Cheese algebras". 
In our paper, this question about universality is not addressed.

In \cite{jnkf} J.N.K. Francis showed that an appropriate
deformation complex for a $d$-algebra $A$ is an extension of
its Hochschild complex by $A$\,. In the spirit of this
result the above version of Deligne's conjecture
can be reformulated as follows: {\it the DG operad of natural
operations on the deformation complex of a $d$-algebra
is quasi-isomorphic to the singular chain operad of
$\SC_{d+1}$ }.

~\\
{\bf Notation and conventions.}
We denote by $\k$ the ground field and
by ``(co)chain complexes'' we mean (co)chain complexes of
vector spaces over $\k$\,. $A$ is a unital
associative algebra over $\k$ and $C^{\bul}(A,A)$ is
the normalized Hochschild cochain complex of $A$ with
coefficients in $A$
\begin{equation}
\label{norm-Hoch}
C^{\bul}(A,A) = \hom((A/\k)^{\otimes \, \bul}\,,\, A)\,.
\end{equation}
The abbreviation SMC stands for ``symmetric monoidal category''
and the notation $\bu$ is reserved for the unit of a symmetric monoidal
category.
We also use the abbreviation SC for ``Swiss Cheese type'' when we discuss
the Swiss Cheese type symmetric operads, 2-operads, sets, ordinals,
and 2-trees.

\subsection{Organization and layout of the paper} All arguments of the paper can be restricted onto the setting
when we only care about operations on $C^\bullet(A,A)$ (and not on the pair $(C^\bullet(A,A),A)$.
Throughout  the paper we use terms like 'non SC part' or 'cochain part' to indicate that we restrict
to $C^\bullet(A,A)$ only.  The exposition is organized so that most of the constructions are first
introduced in the non SC  setting and then extended to the whole SC picture.  As a rule, this
SC extension is rather straightforward.  In our exposition we tried  to isolate the spots dealing with
the SC setting; we hope that the reader interested in proving Deligne's conjecture only
will be able to easily recognize these spots and drop them without any harm to understanding.

Let us now go over the content of the paper.
We start (Sec \ref{operad-La}) with defining an operad  $\op$ of natural operations on the infinite  collection
of  objects
\begin{equation}\label{collect1}
C^n(A,A),n=0,1,2,\ldots;\; A. 
\end{equation}

Next, we  explain how, using the functors of polysimplicial/cosimplicial totalization
(which are called {\em condensation}  in \cite{lattice}), we can convert
the operad $\op$ into a dg operad $|\op|$ 
 which acts on the pair of complexes:
$C^\bullet(A,A)$ and $A$. The operad $|\op|$ is the same as the operad $\La$ in Theorem \ref{main}.
 
In Sec \ref{combop} we give a combinatorial description
of $\op$ in terms of trees and  then reformulate it in  terms of sequences.  The latter descirption
is used in the rest of the paper.

Next, we invoke Batanin's 2-operad theory: in Sec \ref{review} we review the basic notions
of the theory and in Sec \ref{screview} we discuss an SC version of these notions (also due
to Batanin). This section is not needed for the cochain (Deilgne's) part of  the  SC conjecture.
In Section \ref{seq} we define a 2-operadic version $\seq$ of the operad $\op$. 

In Section \ref{condens} we apply the totalization (=condensation) procedure to
the operads $\seq$ and $\se$.  As a result we get
an operad $|\se|$ acting on the complex $C^\bullet(A,A)$ as well as its 2-operadic version
$|\seq|$.   At this moment the advantage of the 2-operadic approach can be seen: the 2-operad
$|\seq|$ turns out to be contractible, contrary to $|\se|$.  

We conclude the section with extending the above mentioned constructions to the SC setting.
We obtain  a contractible SC 2-operad $|\scseq|$ which acts on
the pair $(C^\bullet(A,A),A)$ .  If this operad  satisfied
a technical condition of being {\em reduced},  Batanin's theory  would imply an action of Voronov's
SC operad on $(C^\bullet(A,A),A)$. But $|\scseq|$ happens to be  non-reduced which causes
us to find a reduced contractible sub-operad $\scR$ of $|\scseq|$, see Sec \ref{braces}.
Using a similar approach we also construct a suboperad  $\scoR$ of $|\se|$.  The action of the
operad $|\scoR|$ on $C^\bullet(A,A)$  seems to be equivalent to the celebrated {\em brace structure}
on $C^\bullet(A,A)$  (\cite{GerVor}, \cite{Ezra}, \cite{Gruzia}).
  Batanin's theory can now be applied to
$|\scR|$;  we get  an action on $(C^\bullet(A,A),A)$ of a certain operad $E$ which is homotopy equivalent
to Voronov's SC operad (the operad $E$ is the symmetrization of a cofibrant resolution of $\scR$,  i.e.
 $E:=\sym\ \R \scR$,  see (\ref{Glavnoe})). 

 It also follows that this action passes through the action
of $\scoR$ that is we have a map of operads $E\to \scoR$. We prove that this map is a weak equivalence,
see Theorem \ref{SymRbr-braces};  the  proof of this theorem occupies the whole
Sec \ref{proof1}.  We are now ready for proving the SC conjecture (Sec. \ref{proof2}).
There  is an Appendix  which  contains a certain  contractibility statement needed for proving
Lemma \ref{lmfil1}.

\noindent
{\bf Acknowledgment.}
We would like to thank M. Batanin and J. Bergner for useful
discussions. We also thank anonymous referees for carefully 
reading the paper and many useful remarks and suggestions. 
A big part of this work was done when V.D.
was a Boas Assistant Professor of Mathematics Department
at Northwestern University. During these two years V.D. benefited
from working at Northwestern so much that he
feels as if he finished one more graduate school.
V.D. cordially thanks Mathematics
Department at Northwestern University for this time.
The results of this work were presented at the famous
Sullivan's Einstein Chair Seminar. We would like to thank
the participants of this seminar for questions and
useful comments. We especially thank D. Sullivan
for his remarks which motivated us to
rewrite the formulation of our main result in this paper.
D.T. and B.T. are supported by
NSF grants. V.D. is supported by the 
NSF grant DMS 0856196, Regent's Faculty Fellowship, and the Grant for Support
of Scientific Schools NSh-8065.2006.2. A part of this work was done when V.D. 
lived in Irvine and participated in the vanpool program to commute 
to the UCR campus. V.D. would like to thank Transportation and Parking 
Services of the UC Riverside for their work.

\section{The  operad $\op$ of natural operations on
 the objects $C^{n}(A,A),n=0,1,2,\ldots; A)$}
 \label{operad-La}

Let
 $A$ be a unital monoid in some tensor
(not necessarily symmetric) category (for example, in the category of complexes over a field).
 Consider the full nonsymmetric endomorphism operad
of $A$
$$
C_A(n):= \hom(A^{\otimes n};A), \qquad
n\geq 0.
$$

It is clear that $A$ is naturally a
$C_A$-algebra. The associative  unital
structure on $A$ gives rise to a map of
nonsymmetric operads $\as\to C_A$, where $\as$ is the
nonsymmetric operad of sets controlling unital monoids;
each space $\as(n)$, $n\geq 0$, is a point.

We fix a set of colors
$\chrom:=\nat\sqcup \{\a\}$ and define a
 $\chrom$-colored symmetric operad $\op$ in
the category of sets as an operad
 whose algebra structure on an $\chrom$-family
 of objects $(C(n), n\in \nat; A)$ is:

 --- a nonsymmetric operad structure on the collection
 of objects $C(n)$;

 --- a map of nonsymmetric operads $\as\to C$;

 --- a $C$-algebra structure on $A$.

The operad $\op$ has the following sets of operations:

 $$\text{---}\quad \op(k)_{n_1,n_2,\ldots,n_k}^n:=
\op((n_1,n_2,\ldots,n_k)\mapsto n))
 $$
 where all  the entries are in $\nat$;

 $$\text{---}\quad\op(k,N)_{n_1,n_2,\ldots,n_k}:=
\op((n_1,n_2,\ldots,n_k,\underbrace{\a,\a,\ldots\a}\limits_N)\mapsto
\a)\,, \qquad N \ge 0\,.
 $$

The operadic sets for other colorings are
empty.

The sets  $ \op(k)_{n_1,n_2,\ldots,n_k}^n$  form a $\nat$-colored operad in the obvious way.  Call this operad
{\em the cochain part of $\op$}. An algebra over this operad is a non-symmetric 
operad $C$ equipped with a map (of non-symmetric operads)
$$
\as \to C\,.
$$

Later on  (see \ref{planar})  an explicit combinatorial description of the operad $\op$ will be given.

\subsubsection{}\label{pocos} The unary operations in
the colored operad $\op$ endow the set of colors with
the following category structure:

--- $\hom(n, \a) = \emptyset$ for all
$n > 0$, $\hom(0,\a)$  is a one-point set;

--- $\hom(\a,n) = \emptyset$ for all
$n\in \nat$;

--- $\hom(n,m)=\hom_\Delta([n],[m])$ for all
$n,m\in \nat$;

--- $\hom(\a,\a)=\{\Id\}$.

This implies that the operadic sets of our
colored operad $\op$ have a natural
polysimplicial/cosimplicial structure,
namely:

the collection of sets
$$
\op(k)_{n_1,n_2,\ldots,n_k}^n,
$$
as $n_i,n$ run through $\nat$, is a functor
$$
\op(k):(\Delta^\opp)^k\times\Delta\to \Sets,
$$
(the functor is simplicial in each of the
arguments $n_1,n_2,\ldots,n_k$ and
cosimplicial in $n$);

likewise, for each $N$, the collection of
sets
$$
\op(k,N)_{n_1,n_2,\ldots,n_k}
$$
forms a functor $\op(k,N):(\Delta^\opp)^k\to
\Sets$.

\subsubsection{}
\label{202}
Let $\S$ be a
cosimplicial complex given by
$$
\S([n])^\bullet:=\overline{C_{-\bullet}}(\Delta^n,\k),
$$
where the complex on the right hand side is the normalized
chain complex of the simplex $\Delta^n$ put
in the non-positive degrees.

 Using this
complex, we can convert
polysimplicial/cosimplicial sets into
complexes.

Namely, let $F:(\Delta^{\opp})^k\to\Sets$ be
a functor.  Set
$$
|F|:=\k[F]\otimes_{(\Delta^\opp)^k}
\S^{\boxtimes k},
$$
where $S^{\boxtimes k}:\Delta^k\to
\complexes:$
$$
\S^{\boxtimes k}([n_1],[n_2],\ldots,[n_k]):=
\bigotimes\limits_{i=1}^k \S([n_i]).
$$

Given a functor
$$
G:(\Delta^\opp)^k\times \Delta\to \Sets,
$$
denote by $G^n$ the
evaluation at $[n]\in \Delta$ so that
$$
G^n:(\Delta^\opp)^k \to \Sets
$$
and
$$
n\mapsto G^n
$$
is a functor from $\Delta$ to the category
of $k$-simplicial sets. Set
$$
|G|:=\hom_{\Delta}(\S^\bullet,|G^\bullet|).
$$
\subsubsection{} Set
$$
|\op|(k):=|\op(k)|;
$$
$$
|\op|(k,N):=|\op(k,N)|.
$$

We see that these spaces form a 2-colored
DG operad. Denote this two-colored operad by
$|\op|$.

Now let $A$ be a unital associative algebra
over the field $\k$\,. It is easy to see that the
normalized Hochschild cochain complex $C^{\bul}(A,A)$
(\ref{norm-Hoch}) can be written as
$$
C^\bullet(A,A):=\hom_\Delta(S^*,C_A(*))\,.
$$
Therefore the DG operad $|\op|$ acts on the
pair $(C^\bullet(A,A), A)$\,. This two-colored DG operad
$|\op|$ is the desired operad $\La$ of natural operations
on the pair $(C^\bullet(A,A), A)$ and our Theorem \ref{main}
can be reformulated as
\begin{Theorem}
\label{th}
The operad $|\op|$ is weakly
equivalent to the singular chain operad of Voronov's
Swiss Cheese operad $\SC_2$. The induced
action of the homology operad $H_{-\bul}(\SC_2)$
on the pair $(HH^{\bul}(A), A)$ recovers the one
from Proposition \ref{HH-swiss}\,.
\end{Theorem}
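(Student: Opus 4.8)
The plan is to reduce the theorem to a comparison of contractible 2-operads together with a base-point calculation, exactly mirroring the strategy of \cite{dgcat} for the one-colored Deligne conjecture but carried out in the Swiss Cheese setting. First I would replace $|\op|$ by the combinatorially more tractable suboperad generated by braces-type operations. Concretely, following the layout promised in the introduction, one introduces the 2-operadic refinement $\scseq$ of $\op$, applies the condensation (polysimplicial/cosimplicial totalization) functor $|-|$, and shows that the resulting SC 2-operad $|\scseq|$ is contractible --- each component is the totalization of a simplicial set whose realization is contractible, so one argues levelwise that the normalized chains of the relevant configuration-type polysimplicial sets are acyclic. The 2-operadic language is what makes contractibility available here: the ordinary symmetric operad $|\se|$ is \emph{not} contractible (it has the homotopy type of $E_2$), but its 2-operadic avatar $|\seq|$ is, because the 2-operad structure remembers the extra ordering data that rigidifies the configuration spaces.

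The obstruction to applying Batanin's recognition theorem directly is that $|\scseq|$ is not \emph{reduced} in Batanin's sense (the arity-zero and low-arity components are too big). So the next step is to carve out a reduced contractible sub-2-operad $\scR \subset |\scseq|$ --- the ``Swiss Cheese braces'' operad --- and simultaneously its non-SC analogue $\scoR \subset |\se|$, and to check that these inclusions are weak equivalences (again a levelwise contractibility/acyclicity verification, using the Appendix's contractibility statement where the naive filtration argument breaks down, i.e. for Lemma \ref{lmfil1}). Once $\scR$ is reduced and contractible, Batanin's theorem \cite{Bat} applies: the symmetrization $E := \sym\ \R\scR$ of a cofibrant resolution of $\scR$ is, by Batanin's identification of Voronov's operad with the symmetrization of a contractible cofibrant SC 2-operad, weakly equivalent to the singular chains on $\SC_2$, and it acts on $(C^\bullet(A,A),A)$ through $\scoR$, giving a zig-zag
$$
\mathrm{Chains}(\SC_2) \;\xleftarrow{\ \sim\ }\; E \;\longrightarrow\; |\scoR| \;\longrightarrow\; |\se| \;\longrightarrow\; |\op| \,.
$$

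To finish, two things remain. First, one must show every arrow in this zig-zag is a weak equivalence. The inclusions $|\scoR|\hookrightarrow|\se|\hookrightarrow|\op|$ and $\scR\hookrightarrow|\scseq|$ are handled by the contractibility computations above (both source and target of each non-$E$ arrow are shown to have the ``correct'' $E_2$ / $\SC_2$ homotopy type, or are shown directly acyclic where appropriate); the map $E\to|\scoR|$ being a weak equivalence is precisely Theorem \ref{SymRbr-braces}, whose proof is deferred to Section \ref{proof1} --- I expect this to be \textbf{the main obstacle}, since it requires a genuine comparison between the cofibrant-resolution side and the combinatorial braces side, presumably via an explicit filtration of $\R\scR$ by arity or by number of ``$\a$-colored'' inputs together with a spectral-sequence or acyclic-models argument identifying the associated graded with that of $\scoR$. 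Second, one computes the induced map on homology operads and matches it with Proposition \ref{HH-swiss}: on $H^0$ the $\a$-colored insertions recover the center $Z(A)=HH^0(A,A)$ acting on $A$, the cup product gives the commutative (Gerstenhaber) structure on $HH^\bullet(A,A)$, and the degree reasons force all higher $HH$-classes to act by zero on $A$, which is exactly the module structure in \eqref{mod-struc}; relation \eqref{uslov} then holds for the reason given in the proof of Proposition \ref{HH-swiss}. This identifies the action on $(HH^\bullet(A),A)$ and completes the proof. $\qed$
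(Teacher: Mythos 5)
Your proposal follows essentially the same route as the paper: contractibility of the reduced SC 2-operad $\scR$ (Proposition \ref{br-styag}) makes $\R\scR$ a cofibrant resolution of $\triv$, Batanin's Theorem \ref{bat3} identifies $\sym\R\scR$ with chains on $\SC_2$, Theorem \ref{SymRbr-braces} and Lemma \ref{lmfil-se} supply the zig-zag down to $|\se|\cong|\op|$, and the homology action is matched via the explicit cup product, Gerstenhaber bracket, and the contraction $i(P,a)=a\,P(1,\dots,1)$. The argument is correct as an outline of the paper's proof.
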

We prove this theorem in Section \ref{pfth}.

\noindent\\
{\bf Remark.} Our method also works in the
topological setting: one can apply the
topological realization functors to the
polysimplicial/cosimplicial sets from
\ref{pocos} so as to get a topological
colored operad $|\op|_{\top}$. This
operad can be proven to be weakly equivalent
to Voronov's Swiss Cheese operad.

\section{Two combinatorial descriptions of the operad $\op$}\label{combop}
Our first  description will be in terms of planar trees. Next, we will explain a
transition from the tree description to another one, in terms of sequences.

Each construction will be first introduced for the cochain part of $\op$
 and then 
extended to the whole operad.    These  extensions for both constructions are rather
straightforward.

We will start with fixing a more convenient language.
\subsection{Finite ordinals instead of natural numbers}   Recall that our set
of colors is $\nat\sqcup \{\a\}$, and that
an $\op$-algebra structure on the collection
of spaces $(C(n), n\in \nat; A)$ is the same
as a nonsymmetric operad structure on the
collection of spaces $C(n), n\in \nat$, a
map of operads $\as\to C$, and a $C$-algebra
structure on $A$. The definition of a
nonsymmetric operad implies that we have a
total order on the set of arguments so that
it is better to replace the natural numbers
with isomorphism classes of finite ordinals:
 the number $n$ gets replaced with the
ordinal $<n>=\{1<2<\cdots<n\}$.

Given finite sets $S$, $S_{\c}$, an $S$-family
$\{I_s\}_{s\in S}$ and an $S_{\c}$-family
$\{I_s\}_{s\in S_{\c}}$
of finite (possibly empty) ordinals,
an ordinal $J$\,, and a set $S_\a$, we
then have the following operadic sets:
\begin{equation}
\label{dano1}
\op(S)_{\{I_s\}_{s\in S}}^J;
\end{equation}
\begin{equation}
\label{dano2}
\op(S_\c,S_\a)_{\{I_s\}_{s\in S_\c}},
\end{equation}
where in (\ref{dano1}) the set of arguments
is $S$ and the coloring of $s\in S$ is
$I_s$, the result has the color $J$.
In (\ref{dano2}), the set of arguments is
$S_{\c} \sqcup S_\a$ the argument $s\in S_{\c}$ has
color $I_s$ and all arguments from $S_\a$
have color $\a$. The result also has color
$\a$.

\subsection{Planar trees}\label{planar}
\subsubsection{ The cochain part of $\op$ via planar trees}

For a finite  set $S$ and ordinals
$I_s,s\in S$; $J$, we describe
$$
\op(S)_{\{I_s\}_{s\in S}}^J
$$
as the set of equivalence classes
of planar trees $T$ with the following
structure:

--- a subset of the set of vertices of a tree $T$
is identified with $S \sqcup J$ in such a way that
with elements of $J$ we may only identify the
terminal vertices of $T$. We call the
vertices identified with elements of $S \sqcup J$
marked.

--- the ordered set of edges originating at the
vertex marked by $s\in S$ is identified with
$I_s$\,.

Notice that, the subset of
vertices identified with $J$
acquires from $J$ a natural linear order.
We require that this linear order coincides with
the order which is obtained by going around the
tree in the clockwise direction starting from the
root vertex.

The equivalence relation is
the finest one in which two such trees are
equivalent if one of them can be obtained
from the other by either:

 the contraction of
an edge  with unmarked ends

or: removing an unmarked vertex with only
one edge originating from it and joining the
two edges adjacent to this vertex into one
edge.

~\\
{\bf Example.} The planar tree
$T$ in figure \ref{f} represents an element
in $\op(S)_{\{I_s\}_{s\in S}}^J$ with
$S=\{s_1, s_2\}$\,, $J=\{j_1, j_2, j_3\}$\,,
$I_{s_1}= \emptyset$\,, and $I_{s_2} = <3>$\,.
\begin{figure}[htb]
\begin{center}
\includegraphics[width=5cm,height=5cm]{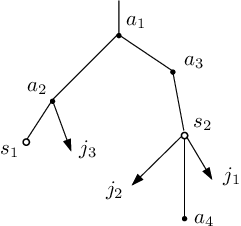}
\caption{Tree $T$} \label{f}
\end{center}
\end{figure}
In all the figures we
use circles to denote the vertices marked
by elements of $S$ and arrows to denote vertices
marked by elements of $J$\,. Thus, in figure \ref{f}
the vertices $a_1$, $a_2$, $a_3$, and $a_4$ are
unmarked. The vertices $a_1$ and $a_2$ correspond
to the product in $\as(2)$, $a_3$ corresponds to
the identity operation in $\as(1)$, and $a_4$
corresponds to the unit in $\as(0)$\,.

In figures \ref{f1} and \ref{f2} we depict
the trees $T_1$ and $T_2$ which are equivalent
to the original tree $T$.
\begin{figure}[htb]
\includegraphics[width=5cm,height=5cm]{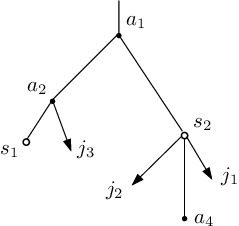}
\caption{Tree $T_1$} \label{f1}
\end{figure}
\begin{figure}[htb]
\includegraphics[width=5.25cm,height=5.25cm]{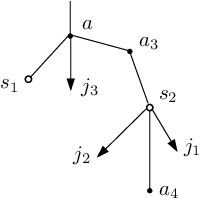}
\caption{Tree $T_2$} \label{f2}
\end{figure}
The tree $T_1$ is obtained from $T$ by
removing the unmarked vertex $a_3$ and joining the
two edges adjacent to this vertex into one
edge. The tree $T_2$ is obtained from $T$
by contracting the edge with the
unmarked ends $a_1$ and $a_2$\,.
The unmarked vertex $a$ of the tree $T_2$
(figure \ref{f2}) corresponds to the unique
element of $\as(3)$\,.

Applying both of the equivalence operations
to the tree $T$ in figure \ref{f} we obtain the
tree $T_3$ depicted in figure \ref{f3}.
\begin{figure}[htb]
\includegraphics[width=6cm,height=5cm]{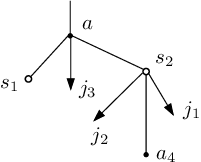}
\caption{Tree $T_3$} \label{f3}
\end{figure}
Although the tree $T_3$ has unmarked vertices $a$
and $a_4$, it is no longer possible to apply any
equivalence operation to $T_3$\,.
We call such trees {\it minimal}.
It is obvious that every equivalence class
of $\op(S)_{\{I_s\}_{s\in S}}^J$ contains
at least one minimal tree.

The equivalence class containing all these planar trees
$T$, $T_1$, $T_2$, and $T_3$ corresponds to the operation
which sends a Hochschild cochain $P_1\in C_A(0)$ and a Hochschild
cochain $P_2\in C_A(3)$ to the Hochschild cochain
$Q\in C_A(3)$ defined by the formula
$$
Q (b_1, b_2, b_3) = P_2(b_1, 1, b_2)\, b_3 \, P_1\,,
$$
$$
b_1, b_2, b_3 \in A\,.
$$

\subsubsection{The whole operad $\op$ in terms of planar trees} Let us now describe the set
$$
\op(S_\c,S_\a)_{\{I_s\}_{s\in S_\c}},
$$
where we use the same notation as above.

Each element of this set can be represented
by a planar  tree $T$
 with the following additional structure:

--- a subset of the set of vertices of $T$
is identified with $S_{\c}\sqcup S_{\a}$ in such
a way that
with elements of $S_{\a}$ we may only identify the
terminal vertices of $T$\,. We call the vertices
identified with elements of $S_{\c}\sqcup S_{\a}$
marked;

--- the  ordered set of edges originating at the
vertex marked by $s\in S_{\c}$ is identified with
$I_s$\,.

The equivalence relation on the set of
isomorphism classes  of such trees is
defined in the same way as in the previous
section.

This description implies the following
identification:
$$
\op(S_\c,S_\a)_{\{I_s\}_{s\in
S_\c}}=\bigsqcup_{>\in \ord(S_\a)}
\op(S_\c)^{S_\a,>}_{\{I_s\}_{s\in S_\c}},
$$
where $\ord(S_{\a})$ is the set of all total
orders on $S_{\a}$.

Let us also describe the degenerate cases.
In the case $S$ is the empty set $\emptyset$ we have
$$
\op (\emptyset)^J = \as(J)\,.
$$
If $S_{\a} = \emptyset$ then
$$
\op(S_\c, \emptyset)_{\{I_s\}_{s\in
S_\c}}= \op(S_\c)^{\emptyset}_{\{I_s\}_{s\in S_\c}}\,.
$$
Finally, if $S_{\c}$ is empty then
$$
\op(\emptyset , S_\a) = \bigsqcup_{>\in \ord(S_\a)}
\as(S_\a,>)\,.
$$

\subsection{Replacing trees with sequences}
\label{posle} 
We will put into a correspondence to any planar  tree 
from the previous subsection a certain sequence which will lead to another
description of $\op$.  We start with the cochain part of $\op$.

\subsubsection{Cochain part of $\op$ in terms of sequences, I}

We need the following notation. Given a
vertex $v$ of a planar tree marked by
an element $s\in S$, let us draw a
little circle centered at this vertex. This
circle gets split into sectors, the set of
these sectors is totally ordered in the
clockwise order. Denote this ordered set by
$I'_s$. The set of edges originating at $v$
is naturally identified with $\vec{I'_s}$,
where $\vec{I'_s}$ is the set of pairs
$\vec{i_1i_2}$, where $i_2$ is an immediate
successor of $i_1$ and $i_1,i_2\in I'_s$. We
see that $I'_s$ is the next ordinal after
$I_s$. Below, given an ordinal $K$, we
denote by $K'$ its next ordinal.

 Given a planar tree $T$
which defines an element  $\overline{T}\in
\op(S)_{\{I_s\}_{s\in S}}^J$, let us
consider its small tubular neighborhood and
let us walk along its boundary starting from
the root vertex of our tree in the
clockwise direction. On our way, we will
meet the vertices marked by elements of $S$
and vertices marked by elements of $J$.
(The latter ones are terminal according to
our requirement.) Every time we approach a
vertex $v$ marked by $s\in S$,
we are at a certain sector from
$I'_s$.  Thus, given a planar tree $T$
representing an element $\overline{T}\in
\op(S)_{\{I_s\}_{s\in S}}^J$, we obtain a
total order $>_T$ on the set
$$
\bigsqcup_{s\in S} I'_s\sqcup J\,.
$$
{\bf Example.} Let us show how we obtain
the order for the tree $T_3$ given in
figure \ref{f3}. This tree represents
an element in
$\op(\{s_1,s_2\})^{\{ j_1, j_2, j_3 \} }_{I_{s_1}, I_{s_2}}$
where $I_{s_1}$ is empty and $I_{s_2}=<3>$\,.
This means that the vertex labeled by $s_1$
(see figure \ref{poryadok})
is surrounded by a single sector $s_1^1$, while
the vertex labeled by $s_2$ is surrounded by
four sectors $s_2^1$, $s_2^2$, $s_2^3$,
$s_2^4$ which we number in the clockwise
direction.
\begin{figure}[htb]
\begin{center}
\includegraphics[width=8cm,height=6cm]{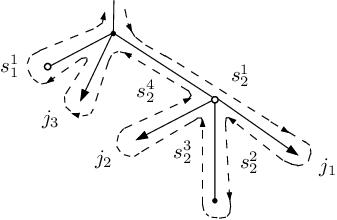}
\caption{Tree $T_3$} \label{poryadok}
\end{center}
\end{figure}
Walking along the boundary of
a small tubular neighborhood of $T_3$,
as it is shown on figure \ref{poryadok},
we get the following order on the set
$\{s^1_1,  s_2^1, s_2^2, s_2^3,
s_2^4, j_1, j_2, j_3 \}$ :
$$
s_2^1 < j_1 < s_2^2 < s_2^3 < j_2 < s_2^4 < j_3 < s_1^1\,.
$$

For every planar tree $T$ representing an
element $\overline{T}\in
\op(S)_{\{I_s\}_{s\in S}}^J$ the
corresponding  total order $>_T$ satisfies:

1) let $T_1, T_2$ be planar trees
representing the same element
$$
\overline{T_1}=\overline{T_2} \in
\op(S)_{\{I_s\}_{s\in S}}^J
$$
Then $>_{T_1}=>_{T_2}$. Hence, for
each $\overline{T}\in\op(S)_{\{I_s\}_{s\in
S}}^J$ we have a well defined order, to be
denoted by $>_{\overline T}$;

2) The total order $>_T=>_{\overline{T}}$
agrees with the existing orders on $I'_s,J$;

3) given distinct $s_1,s_2\in S$ it is
impossible to find $i_1,j_1\in I'_{s_1}$;
$i_2,j_2\in I'_{s_2}$ such that
$$
i_1 <_T i_2 <_T j_1 <_T j_2\,.
$$

Let us denote the set of all total orders satisfying
conditions 2) and 3) by $\Ord(S)^J_{\{I_s\}_{s\in S}}$\,.

\begin{Proposition}
The set $\Ord(S)^J_{\{I_s\}_{s\in S}}$
is in 1-to-1 correspondence with
$\op(S)_{\{I_s\}_{s\in S}}^J$.
\end{Proposition}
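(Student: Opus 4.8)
The plan is to exhibit mutually inverse maps between $\Ord(S)^J_{\{I_s\}_{s\in S}}$ and $\op(S)_{\{I_s\}_{s\in S}}^J$. In one direction, we already have the construction $T \mapsto {>_T}$; by property 1) it descends to a well-defined map $\overline{T} \mapsto {>_{\overline T}}$, and by properties 2) and 3) its image lies in $\Ord(S)^J_{\{I_s\}_{s\in S}}$. So the real content is to build the inverse: from a total order ${>}$ on $\bigsqcup_{s\in S} I'_s \sqcup J$ satisfying 2) and 3), reconstruct a planar tree (canonically, a minimal one) representing an element of $\op(S)_{\{I_s\}_{s\in S}}^J$, and to check that the two assignments are inverse to each other.

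First I would describe the reconstruction. Think of the order ${>}$ as prescribing the sequence of ``events'' encountered while walking clockwise around the boundary of a tubular neighborhood of the tree: reading off the elements of $\bigsqcup_s I'_s$ in $>$-order tells us, for each marked vertex $v$ labelled by $s$, at which moments we brush past $v$ and in which of its sectors from $I'_s$ we are, while the elements of $J$ mark the terminal ($J$-)vertices in the order they are visited. Condition 3) is precisely the planarity/non-crossing constraint: it says the ``visits'' to distinct vertices $s_1, s_2$ are not interleaved in the forbidden $i_1 < i_2 < j_1 < j_2$ pattern, so the intervals of the boundary walk between consecutive visits to a fixed vertex nest properly and can be realized by an honest planar embedding. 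Concretely, I would induct on $|S|$: pick the vertex $s_0$ that is ``outermost'' (the one all of whose $I'_{s_0}$-elements, except possibly trailing/leading ones, enclose everything visited in between), split the linear order into the blocks sitting between consecutive sectors of $s_0$, recursively build a planar tree on each block (with the appropriate sub-family of $\{I_s\}$ and the $J$-elements falling in that block), and graft these subtrees onto the edges of $s_0$ indexed by $\vec{I'_{s_0}}$; the remaining structure (unmarked vertices and the map $\as \to C$) is filled in canonically by taking the minimal representative. Between consecutive marked vertices one inserts the unique strings of $\as$-operations forced by the combinatorics, so the minimal tree is unique and the map ${>} \mapsto \overline{T_{>}}$ is well-defined.

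The two remaining checks are: (a) ${>_{\overline{T_{>}}}} = {>}$, which is essentially immediate from the construction since the reconstructed tree was built so that the clockwise boundary walk reproduces ${>}$; and (b) $\overline{T_{{>_{\overline T}}}} = \overline{T}$, i.e.\ two planar trees with the same associated order are equivalent — this is where I expect the main work. The cleanest route is to reduce both trees to their minimal representatives (which exist by the remark after figure \ref{f3}) and argue that a minimal tree is determined by ${>_T}$: the marked vertices and their sector data are read directly off the order, and minimality pins down the unmarked part (the $\as$-decorations) uniquely because any freedom there could be contracted away. The subtle point — and the main obstacle — is handling the degenerate/boundary features cleanly: empty ordinals $I_s = \emptyset$ (a marked vertex surrounded by a single sector, so a univalent marked vertex), the unit $\as(0)$ vertices $a_4$-style that survive in minimal trees, the root behavior, and the interaction of $J$-vertices (which must be terminal) with all of this. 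Once the dictionary ``minimal tree $\leftrightarrow$ order satisfying 2),3)'' is established carefully in these corner cases, bijectivity follows; I would organize the argument as an induction on $|S| + |J|$ so that the degenerate base cases ($S = \emptyset$, giving $\as(J)$, and $I_s = \emptyset$) are isolated and dispatched first.
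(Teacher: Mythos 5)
Your overall strategy coincides with the paper's: the map from (equivalence classes of) trees to orders is the one already constructed, and the real content is an explicit inverse built by induction on $|S|$, followed by checking that both composites are identities, the tree--order--tree composite being verified on minimal representatives. The one substantive deviation is the inductive step for the inverse, and as stated it does not go through. The paper extracts from condition 3) a nesting partial order on $S$ (namely $s<\ts$ when the convex hull of $I'_{\ts}$ strictly contains $I'_s$), picks a \emph{minimal} (innermost) element $s_{min}$, collapses the interval $\tI_{s_{min}}$ spanned by $I'_{s_{min}}$ to a single new $J$-type element, recurses, and then re-expands that element into the vertex $s_{min}$ together with its edges and the absorbed $J$-leaves. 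You instead peel off a \emph{maximal} (``outermost'') vertex $s_0$ and recurse on the blocks between its consecutive sectors, grafting the results onto the edges of $s_0$.

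The problem is that in general there is no outermost vertex whose sectors span the whole order: there may be several pairwise incomparable maximal vertices, and the top level of the tree is then a forest of such subtrees, interleaved with further $J$-leaves, hanging off an unmarked $\as$-root. The paper's running example is already of this shape: in the order $s_2^1<j_1<s_2^2<s_2^3<j_2<s_2^4<j_3<s_1^1$ the vertices $s_1$ and $s_2$ are incomparable, and $j_3$, $s_1^1$ lie outside $\tI_{s_2}$, so ``the blocks between consecutive sectors of $s_0$'' do not exhaust the order and the grafting step has nowhere to put the leftover material. This is repairable (group the top level by the antichain of maximal vertices and graft the resulting forest onto an $\as$-vertex), but it is exactly the complication the paper's choice avoids: a minimal element of the nesting order always exists, and its span $\tI_{s_{min}}$ contains nothing but its own sectors and elements of $J$, so the collapse is a single clean substitution, and the degenerate features you flag ($I_s=\emptyset$, unit vertices, root behaviour) then need no separate treatment --- when $I_{s_{min}}=\emptyset$ the re-expansion simply draws no edges. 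You should either switch to the innermost-vertex recursion or supply the missing top-level forest step.
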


\begin{proof} Let us describe an inductive
construction which assigns to each
total order $\pi$ on
\begin{equation}
\label{stroka}
\bigsqcup_{s\in S} I'_s\sqcup J
\end{equation}
satisfying conditions 2) and 3)
a minimal tree $T$ which recovers the
order $\pi$ by walking along a small
tubular neighborhood of $T$\,.

The induction goes by the order $|S|$
of the set $S$\,.

For $S = \emptyset$ the set
$\Ord(S)^J_{\{I_s\}_{s\in S}}$ consists of
a single element. That is the given order
on $J$\,. In this case it is very easy to
find a minimal tree which recovers this
order. It is also easy to see that such a tree
is unique.

Let us suppose that we can construct a
desired minimal tree for all elements of
$\Ord(S_0)^J_{\{I_s\}_{s\in S_0}}$ if
$|S_0|< |S|$\,. We need to present a
construction for every
$\pi \in \Ord(S)^J_{\{I_s\}_{s\in S}}$\,.

Condition 3) implies that for an arbitrary
pair $s, \ts \in S$ exactly one of the
following options realizes:
\begin{enumerate}
\item all elements of $I'_{\ts}$ are smaller
than elements of $I'_s$\,,

\item all elements of $I'_{\ts}$ are greater
than elements of $I'_s$\,,

\item $I'_{\ts}$ splits into two non-empty
subsets such that all elements of the first
subset are smaller than all elements of $I'_s$
while all the elements of the second subset
are greater than elements of $I'_s$\,

\item same as  (3) with $s$ and $\ts$ interchanged.

\end{enumerate}
If the third (resp. fourth) option realizes we say that
$s < \ts$ (resp. $\ts<s$\,). Thus we get a partial order
on the set $S$\,.

Since $S$ is finite, it has at least
one minimal element. Let us denote this
element by $s_{min}$ and introduce
the interval $\tI_{s_{min}}$
of the ordinal (\ref{stroka}) between
the minimal element of $I'_{s_{min}}$ and the maximal
element of $I'_{s_{min}}$\,. It is obvious that
$\tI_{s_{min}}$ consists of elements of $I'_{s_{min}}$
and some elements of $J$\,.

Let us consider the set
\begin{equation}
\label{stroka1}
\bigsqcup_{s\in S^{(1)} }
I'_s\sqcup J^{(1)}\,,
\end{equation}
where $S^{(1)}= S\setminus \{ s_{min}\} $ and
$J^{(1)}$ is obtained from $J$ by attaching
the element $s_{min}$ and removing
those elements of $J$ which belong to
the interval $\tI_{s_{min}}$\,.
In other words,
\begin{equation}
\label{J-1}
J^{(1)} = J \sqcup \{ s_{min} \}
\setminus ( J \cap \tI_{s_{min}} ) \,.
\end{equation}

Notice that, the set (\ref{stroka1})
is obtained from (\ref{stroka}) by
replacing the interval $\tI_{s_{min}}$
by a single element $s_{min}$\,.
Hence, (\ref{stroka1})
acquires a natural total order. Let us denote this
order by $\pi^{(1)}$\,.

It is not hard to see
that $\pi^{(1)}$ satisfies conditions 2) and 3)
and hence is an element of the set
$$
\Ord(S^{(1)})^{J^{(1)}}_{\{I_s\}_{s\in S^{(1)} }}\,.
$$

Since $|S^{(1)}| < |S|$ we can assign to $\pi^{(1)}$
a minimal tree $T^{(1)}$ which recovers
the order $\pi^{(1)}$ on (\ref{stroka1})\,.

To construct the desired tree $T$ we observe
that the element $s_{min}$ is identified with
an external vertex $v$ of $T^{(1)}$\,.
So, we draw from
this vertex $v$ edges labeled by elements
$\vec{i_1 i_2}$ of $\vec{I'}_{s_{min}}$\,.
Recall that $\vec{I'}_{s_{min}}$ consists of pairs
$\vec{i_1 i_2}$, where $i_2$ is an immediate
successor of $i_1$ and $i_1,i_2\in I'_{s_{min}}$\,.

Let us denote by $t^{i_1 i_2}$ the terminal
vertex of the edge corresponding to
$\vec{i_1 i_2}$\,.

If there are no elements of $J$ between $i_1$
and $i_2$ then we leave $t^{i_1 i_2}$
as an unmarked terminal vertex of
the tree $T$.

If there is only one element $j$ of $J$
between $i_1$ and $i_2$ we leave
$t^{i_1 i_2}$ as a terminal vertex of $T$
and mark it by $j$\,.

Finally, if we have elements
$j_1, \dots, j_m \in J$ ($m>1$)
between $i_1$ and $i_2$, then we draw from
the vertex $t^{i_1 i_2}$ exactly $m$
terminal edges. We leave $t^{i_1 i_2}$
unmarked and mark the corresponding
terminal vertices by $j_1, \dots, j_m$ in
the clockwise direction.

Let us denote the resulting tree by $T$\,.
It is not hard to see that, since
$T^{(1)}$ recovers the order $\pi^{(1)}$
on (\ref{stroka1})
the tree $T$ recovers the order $\pi$ on
(\ref{stroka}). It is also
obvious that, since the tree $T^{(1)}$ is minimal,
so is $T$\,.

We already have a map from the set
$\op(S)^J_{\{I_s\}_{s\in S}}$ to
the set $\Ord(S)^J_{\{I_s\}_{s\in S}}$
which is defined by assigning the total
order to a tree. Let us denote this
map by $\nu_{\ord}$
$$
\nu_{\ord} : \op(S)^J_{\{I_s\}_{s\in S}}
\to
\Ord(S)^J_{\{I_s\}_{s\in S}}\,.
$$
The above construction provides us
with the map in the opposite direction:
$$
\nu_{\rm tree} : \Ord(S)^J_{\{I_s\}_{s\in S}}
\to
\op(S)^J_{\{I_s\}_{s\in S}}\,.
$$
It is clear from the construction that
the composition $\nu_{\ord}\circ \nu_{\rm tree}$
is the identity on $\Ord(S)^J_{\{I_s\}_{s\in S}}$\,.

It is not hard to verify that if we start
with a minimal tree $T$ representing an
element $\overline{T} \in \op(S)^J_{\{I_s\}_{s\in S}}$\,,
and assign to $T$ the total order $\pi$ from
$\Ord(S)^J_{\{I_s\}_{s\in S}}$\,, then the
above construction gives us back exactly
the same minimal tree $T$\,.
This implies that the composition
$\nu_{\rm tree} \circ \nu_{\ord}$ is the
identity on the set $\op(S)^J_{\{I_s\}_{s\in S}}$
and the proposition
follows\footnote{In particular, it implies that
in each equivalence class of trees
there is exactly one minimal tree.}.
\end{proof}

\noindent\\
{\bf Remark.} The construction presented
in the proof
is reminiscent of Kontsevich-Soibelman
pairs of complementary orders \cite{KS}.

\subsubsection{Cochain part of $\op$ via sequences, II: modification}\label{modif}  Given a total
order as above, we can construct a map

$$
Q:\bigsqcup\limits_{s\in S} I'_s\to J'
$$
as follows. We identify $J'=J\sqcup \{M\}$,
where $M>J$. Set $Q(x)=j$ if $j$ is the
minimal element from $J$ such that $j>x$; if
there is no such $j$, set $Q(x)=M$.

Thus, given a total order as in the previous
subsection, we obtain the following data:

--- a total order on the set
 $$\I:=\bigsqcup\limits_{s\in
S} I'_s;
$$
a non-decreasing map
$$
\I\to J'.
$$

 These data should satisfy:

i) the order  on $\I$ agrees with those on
each $I'_s$;

ii) same as condition 3) from Sec
\ref{posle}.

Denote the set of such objects by
$$
\se(S)_{\{I'_s\}_{s\in S}}^{J'}\,.
$$

This set is in 1-to-1 correspondence with
the set of total orders from the previous
subsection, hence we have a bijection
with the set $\OO(S)_{\{I_s\}_{s\in S}}^J$:
\begin{equation}
\label{seqtree1}
\se(S)_{\{I'_s\}_{s\in S}}^{J'}\to
\OO(S)_{\{I_s\}_{s\in S}}^J.
\end{equation}

\subsection{The whole operad $\op$ in terms of sequences} Likewise, one identifies
the set $\OO(S_\c,S_\a)_{\{I_s\}_{s\in
S_\c}}$ with the set of total orders on
$$
\bigsqcup\limits_{s\in S_\c} I'_s\sqcup S_\a
$$
satisfying:

--- the total order agrees with those on
each $I'_s$;

--- same as condition 3) from Sec.
\ref{posle}.

Denote the set of such total orders by
$\se(S_\c,S_\a)_{\{I_s\}_{s\in S_\c}}$.

The construction of the 1-to-1
correspondence
\begin{equation}\label{seqtree2}
\se(S_\c,S_\a)_{\{I'_s\}_{s\in S_\c}}\to
\OO(S_\c,S_\a)_{\{I_s\}_{s\in S_\c}}
\end{equation}
is the same as in the previous subsection.

\subsection{Operadic structure on $\se$}\label{compse} Let $\nat'$ be the set of
isomorphism classes of non-empty finite
ordinals. The identifications
(\ref{seqtree1}), (\ref{seqtree2}) imply
that the colored operad structure on $\OO$
induces a colored operad structure on the
collection of spaces $\se$. It turns out
that this operadic structure can be
naturally formulated in terms of $\se$. 

~\\

\noindent
{\bf Warning.}  We will not use the symbol $'$
anymore when talking about ordinals from
$\nat'$. The reason is that in the sequel,
instead of the operad $\OO$, the isomorphic
operad $\se$ will be used.

\subsubsection{Operadic structure on the cochain part of $\se$} Let $T$ be a finite set and
let $S_t$ be a $T$-family of finite sets.
Let $S:=\sqcup_t S_t$ and $p:S\to T$ be the
map which sends  $S_t$ to $t$.

Suppose we are given ordinals $I_s$,
$s\in S$; $J_t,t\in T$, and $J$.

Describe the operadic composition
$$
\se(T)_{\{J_t\}_{t\in T}}^J\times
\prod\limits_{t\in T}
\se(S_t)^{J_t}_{\{I_s\}_{s\in S_t}}\to
\se(S)_{\{I_s\}_{s\in S}}^J\,.
$$

Let $u\in \se(T)_{\{J_t\}_{t\in T}}^J$ and
$u_t\in \se(S_t)^{J_t}_{\{I_s\}_{s\in
S_t}}$. Let us describe  the composition $v$
of these elements.

1) the total order $>_v$ is defined as a
unique one

---  which agrees with the orders $>_{u_t}$ on
$$
\sqcup_{s\in S_t} I_s\subset \sqcup_{s\in S}
I_s
$$
for each $t\in T$;

--- for which the map
\begin{equation}\label{seqq}
\sqcup_{t\in T} F_{u_t}:\sqcup_{s\in S}
I_s\to (\sqcup_{t\in T} J_t,>_u)
\end{equation}
is non-decreasing.

2) the map $F_v$ is just the composition  of
(\ref{seqq}) with the map $F_u$.

\noindent\\
{\bf Remark.} The non-SC part of the operad $\se$ as well as 
its totalization was considered in earlier papers on 
Deligne's conjecture and its variations. 
Thus, 
the non-SC part of $\se$ is isomorphic to the 
second filtration stage of the lattice path operad 
introduced by M. Batanin and C. Berger in \cite{lattice}. 
The non-SC part of the totalization of $\se$ was considered 
in papers \cite{M-SmithJAMS} and \cite{M-SmithAJM} by
J. E. McClure and J. H. Smith.

\subsubsection{Operadic structure on the whole operad $\se$} To describe the
remaining composition maps we consider
sets $S_\c, S_\a, T_\c ,T_\a$
and let $P:S_\c\sqcup S_\a\to T_\c\sqcup
T_\a$ be a map such that $P^{-1}T_\c\subset
S_\c$. For $t\in T_\a$ we set
$(P^{-1}t)_\a:=P^{-1}t\cap S_\a$;
$(P^{-1}t)_\c:=P^{-1}t\cap S_\c$.

Let $\{I_s\}_{s\in S_\c}$; $\{J_t\}_{t\in
T_\c}$;  be  non-empty ordinals. We need to define
the following composition map:
\begin{multline*}
\se(T_\c,T_\a)_{\{J_t\}_{t\in T_\c}}\times
\prod\limits_{t\in
T_\c}\se(P^{-1}t)^{J_t}_{\{I_s\}_{s\in
P^{-1}t}}\times \prod\limits_{t\in
T_\a}\se((P^{-1}t)_\c,(P^{-1}t)_\a)_{\{I_s\}_{s\in
(P^{-1}t)_\c}}\\
\to \se(S_\c,S_\a)_{\{I_s\}_{s\in S_\c}}\,.
\end{multline*}

Choose elements
$$
v\in \se(T_\c,T_\a)_{\{J_t\}_{t\in T_\c}};
$$
$$
u_t\in \se(P^{-1}t)^{J_t}_{\{I_s\}_{s\in
P^{-1}t}};\ t\in T_\c\,,
$$
$$
u_t\in
\se((P^{-1}t)_\c,(P^{-1}t)_\a)_{\{I_s\}_{s\in
(P^{-1}t)_\c}};\ t\in T_\a
$$
and denote their composition by $w$.

Let us set
$$
\I_w:=\bigsqcup\limits_{s\in S_\c} I_s
\sqcup S_\a.
$$
and define  a map
$$
F:\I_w\to \I_v,
$$
where
$$
\I_v=\bigsqcup\limits_{t\in T_\c}
J_t\sqcup T_\a,
$$

as follows:

--- If $t\in T_\c$ then the restriction
of $F$ to the subset
$$
\I_{u_t}:=\bigsqcup\limits_{s\in P^{-1}t}
I_s,
$$
should coincide with the map
$F_{u_t}:\I_{u_t}\to J_t$;

--- if $t\in T_\a$ then the restriction of $F$ to
the subset
$$
\I_{u_t}:=\bigsqcup\limits_{s\in
(P^{-1}t)_\c} I_s \sqcup (P^{-1}t)_\a
$$
should send every element to $t$.

We define the order $>_w$ as the unique one
for which the map $F$ is non-decreasing and
which agrees with the orders $>_{u_t}$ on
$\I_{u_t}$, $t\in T_\c\sqcup T_\a$.

\section{ Review of 2-operads}
\label{review} We are
going to remind the basic definitions from
Batanin's theory of 2-operads which will be
used below.  Next, we review Batanin's definition of
SC 2-operad. 

{\em An ordinal} is a  finite totally
ordered set. Another name for ordinals is a
1-tree.

{\em A 2-tree $\t$} is a pair of ordinals
$S,T$ along with an order preserving  map
$\t:S\to T$.

A 2-tree is called {\em pruned} if the map
$\t$ is surjective.

{\em A map of 2-trees} $$P:(\t:S\to T)\to
(\t_1:S_1\to T_1)$$ is a  pair of maps
$P_S:S\to S_1$; $P_T:T\to T_1$ such that
$\t_1P_S=P_T\t$; $P_T$ is order preserving;
$P_S$ preserves the order on each set
$\t^{-1}t$, $t\in T$.

This way, 2-trees form a category $\ttrees$.

Given  $s_1\in S_1$, we define a 2-tree
$P^{-1}s_1$ as follows:
$$
\t \, \Big|_{(P_S)^{-1} s_1} \, : \,
(P_S)^{-1} s_1\to (P_T)^{-1}\t_1(s_1).
$$

{\em A 2-operad} in a symmetric monoidal
category (SMC) $\C$ is defined as:

--- a functor $\cO:\ttrees^{\times}\to \C$,
where $\ttrees^{\times}$ is the groupoid of
isomorphisms of $\ttrees$ (note that every
object in this groupoid has the trivial
automorphism group);

--- for every map of 2-trees $P:\t\to \t_1$,
where $\t:S\to T$; $\t_1:S_1\to T_1$, there
should be given a map
$$
\cO(\t_1)\otimes\bigotimes\limits_{s_1\in
S_1} \cO(P^{-1}s_1)\to \cO(\t)
$$
called {\em the operadic composition map}.

These maps should satisfy a certain
associativity property. In order to
formulate it let us define the objects
$\cO(P)$, where $P:\t\to\t_1$ is a map of
2-trees as follows:
$$
\cO(P):=\bigotimes\limits_{s_1\in
S_1}\cO(P^{-1}s_1).
$$

The operadic insertion maps can be rewritten
as
$$
\cO(\t_1)\otimes \cO(P)\to \cO(\t).
$$

 Given a chain of
maps of 2-trees
$$
\t\stackrel P\to \t_1\stackrel Q\to \t_2,
$$
the operadic insertion maps naturally give
rise to a map
\begin{equation}
\label{prop}
\cO(Q)\otimes \cO(P)\to \cO(QP)\,.
\end{equation}
Indeed, for every $s_2\in S_2$, where
$t_2:S_2\to T_2$, the map $P$ naturally
restricts to a map of 2-trees
$$
P_{s_2}:(QP)^{-1}s_2\to Q^{-1}s_2
$$
and we have
$$
\cO(P)\cong
\bigotimes\limits_{s_2\in S_2} \cO(P_{s_2}).
$$

We then define the  map (\ref{prop}) as
follows:
$$
\cO(Q)\otimes \cO(P)\cong
\bigotimes\limits_{s_2\in S_2}
\cO(Q^{-1}s_2)\otimes \cO(P_{s_2})\to
\bigotimes\limits_{s_2\in S_2}
\cO((QP)^{-1}s_2)=\cO(QP).
$$
The associativity axiom requires that the
map (\ref{prop}) be associative: the
following  maps should coincide:
$$
\cO(R)\otimes \cO(Q)\otimes \cO(P)\to
\cO(RQ)\otimes \cO(P)\to \cO(RQP)
$$
and
$$
\cO(R)\otimes \cO(Q)\otimes \cO(P)\to
\cO(R)\otimes \cO(QP)\to \cO(RQP).
$$

\noindent\\
{\bf Remark.} 1-trees are simply ordinals and the definition
of a 1-operad based on 1-trees coincides with the definition
of a nonsymmetric operad.

\subsection{Colored 2-operads}
\subsubsection{Colored 2-trees}
 Fix a set of
colors $\chrom$.  Define a colored 2-tree
$\tau$ as:

--- a 2-tree $\t_\tau:S_\tau\to T_\tau$;

--- a map $\chi_\tau:S_\tau\to \chrom$;

--- an element $c_\tau\in\chrom$.
\subsubsection{} Given colored 2-trees
$\tau_1,\tau_2$ we define  their map
$P:\tau_1\to \tau_2$ as follows:

--- if $c_{\tau_1}=c_{\tau_2}$, then it is
just a map $P : \t_{\tau_1}\to
\t_{\tau_2}$ of the underlying 2-trees;

--- if $c_{\tau_1}\neq c_{\tau_2}$, then
we declare that there are no maps $\tau_1\to
\tau_2$.

 This way  colored 2-trees form a
category.

Given such a map and $s_2\in S_{\tau_2}$ the
2-tree $P^{-1}s_2$ naturally receives a
coloring as follows.

Recall that the 2-tree $P^{-1}s_2$ is
defined as
$$
\t_{\tau_1} \, \Big|_{(P_S)^{-1} s_2}\, : \, (P_S)^{-1} s_2 \to
(P_T)^{-1}\t_{\tau_2} s_2\,.
$$

We then define
$$
\chi_{P^{-1}s_2}:P_S^{-1}s_2\to \chrom
$$
as the restriction of $\chi_{\tau_1}$ and
set
$$
c_{P^{-1}s_2}:=\chi_{\tau_2}(s_2).
$$

\subsubsection{} We then define a colored
2-operad in a SMC $\C$ as:

--- a functor $\cO$ from the isomorphism groupoid
of the category of colored 2-trees to the
category $\C$;

--- for every map $P:\tau_1\to \tau_2$ of
colored 2-trees there should be given the
operadic composition map
$$
\cO(\tau_2)\otimes\bigotimes_{s_2\in
S_{\tau_2}} \cO(P^{-1}s_2)\to \cO(\tau_1).
$$

Next, given a map $P:\tau_1\to\tau_2$ we
define
$$
\cO(P):=\bigotimes\limits_{s_2\in
S_{\tau_2}} \cO(P^{-1}s_2)
$$
and observe that the operadic composition
maps naturally produce  maps
$$
\cO(Q)\otimes \cO(P)\to \cO(QP),
$$
where $P:\tau_1\to \tau_2$, $Q:\tau_2\to
\tau_3$.

Lastly we require the associativity of this
map in the same way as for the non-colored
2-operads.

\subsection{Unital colored 2-operads}
\subsubsection{} Given a color $c\in \chrom$,
consider a special 2-tree 
$
\u_c:\pt\to \pt
$
such that $\chi_{\u_c}$ sends $\pt$ to $c$
and $c_{\u_c}:=c$.

 For every isomorphism $P:\tau_1\to
\tau_2$ of  colored 2-trees every pre-image
$P^{-1}s_2$, $s_2\in S_{\tau_2}$, is
isomorphic (canonically) to $\u_c$, where
$c=\chi_{\tau_2}(s_2)$. Furthermore, for every colored
2-tree $\tau$ the pre-image $Q^{-1}\, \pt$ of the point for
a unique map $Q : \tau \to \u_{c_{\tau}}$ is equal to $\tau$\,.

Let $\bu$ be the unit of the underlying symmetric monoidal
category. Define {\em a unital 2-operad } as a colored
2-operad $\cO$ along with maps 
\begin{equation}\label{unitalmap}
\bu\to
\cO(\u_c)
\end{equation}
for each $c\in \chrom$ satisfying:

--- for every isomorphism $P:\tau_1 \to \tau_2$,
the map
$$
\cO(\tau_2)\cong \cO(\tau_2)\otimes {\bu
}^{\otimes S_{\tau_2}}\to \cO(\tau_2)\otimes
\bigotimes\limits_{s_2\in S_{\tau_2}}
\cO(P^{-1}s_2)\to \cO(\tau_1)
$$
coincides with the map $\cO(\tau_2)\to
\cO(\tau_1)$ induced by $P^{-1}$ from the
definition of  $\cO$ as a functor from the
isomorphism groupoid of the category of
colored 2-trees.

--- for every colored 2-tree $\tau$ the composition
$$
 \cO(\tau) \cong \bu \otimes \cO(\tau) \to
\cO (\u_{c_{\tau}}) \otimes \cO(\tau) \to \cO(\tau)
$$
is the identity on $\cO(\tau)$\,.

\subsubsection{Pruned colored 2-operads}
\label{pruned}
 Let
$$
P:\tau_1\to\tau_2
$$
be a map of colored 2-trees. According to M. Batanin
\cite{Bat} $P$ is called {\em a full injection} if $P_{S}: S_{\tau_1}\to
S_{\tau_2}$ is a color-preserving
isomorphism and $P_T : T_{\tau_1}\to
T_{\tau_2}$ is an injection. Next, let $\tau$ be a colored 2-tree with its underlying
2-tree $\t:S\to T$. We say that $\tau$ is {\em pruned} if  the map $\t:S\to T$ is surjective.

Let $\cO$ be a  unital colored 2-operad.
Consider the composition map associated with
$P$:

$$
\cO(\tau_2)\otimes \bigotimes\limits_{s_2\in
S_2} \cO(P^{-1}s_2)\to \cO(\tau_1)\,.
$$

It is clear that each $P^{-1}s_2$ is a 2-tree
of the form $\u_c$, $c\in \chrom$. Hence we
have unital maps $\bu \to \cO(P^{-1}s_2)$.
Pre-composition with  these maps gives rise
to a map
\begin{equation}
\label{themap}
\cO(\tau_2)\to \cO(\tau_1)\,.
\end{equation}

\begin{Definition}
We call $\cO$ a pruned
2-operad if for every full injection $P$
the map (\ref{themap}) is an isomorphism.
\end{Definition}

For every colored 2-tree $\tau$ there exists a
unique (up-to an isomorphism) pruned colored 2-tree
$\tau'$ together with a full injection $\tau' \to \tau$\,.
 Thus, a  pruned 2-operad  is completely
determined by prescribing its spaces for
each pruned 2-tree.

\subsubsection{Reduced 2-operads} In this section all 2-operads are non-colored.
 Let us first define the trivial 2-operad $\triv$
 by setting
$$
\triv(\tau)=\bu\,
$$
for all 2-trees  $\tau$.
Here $\bu$ is the unit of the SMC and
the operadic  composition maps
are the canonical  maps sending
tensor products of $\bu$ to $\bu$.

We say that a pruned   2-operad $\cO$ is
{\em reduced} if

--- all the unit maps $\bu\to
\cO(\u)$, $\bu\to \cO(\u)$ are
isomorphisms;

--- $\cO(\tau)=\bu$ whenever $|S_\tau|\leq 1$ so that we
have an identification
$\cO(\tau)=\triv(\tau)$ for all such $\tau$;

--- for every map $P:\tau_1\to \tau_2$ where
$|S_{\tau_1}|,|S_{\tau_2}|\leq 1$ the
corresponding operadic composition law
coincides with that of $\triv$.

Note that, equivalently, one can only
require that the conditions are the case for
pruned 2-trees $\tau$.
\subsubsection{Desymmetrization} Given a
colored  symmetric operad $\cO$, one can
define a colored 2-operad $\des\cO$ by
setting
$$
\des\cO(\tau)=\cO(S_\tau),
$$
where the coloring on the right hand side is determined
by that of $\tau$, and the operadic
composition  maps are inherited from those
of $\cO$.

\subsection{Symmetrization}\label{symmetr1}
If the SMC $\C$ has small colimits then
the functor $\des$ has a left adjoint $\sym$\,.
For many categories of higher operads the functor
$\sym$ can
be elegantly expressed using colimits \cite{Bat}.
Here we recall from \cite{Bat} a description
of the functor $\sym$ for the
category of reduced  2-operads.

For every  set $S$ we define a category $\J(S)$\,.

The objects of $\J(S)$ are pruned
$2$-trees of the form
$$
\t : S \to T\,.
$$
Morphisms are the maps between $2$-trees
which induce the identity map on $S$\,.

Notice that, although elements of a set $S$ are not ordered, choosing an object
of the category $\J(S)$ we equip $S$ with
a total order.

\noindent\\
{\bf Remark.} It is not hard to show that
for every  set $S$ the category $\J(S)$ is a poset whose opposite is called
 the Milgram poset \cite{Milgram}.

Let $\cO$ be a reduced  2-operad.

For every  set $S$ the  2-operad $\cO$
gives us an obvious (contravariant) functor from
the category $\J(S)$
to the underlying SMC $\C$\,. We denote this
functor by $\cO_S$\,.

According to Theorem 4.3 from \cite{Bat} we have
\begin{equation}
\label{sym}
\sym \cO (S) = \colim_{\J(S)} \cO_S \,.
\end{equation}

The operadic multiplications of $\sym \cO$ can
be easily obtained from those of $\cO$ using
the properties of colimits.

\subsubsection{Model structure}\label{secmodel}  Let us consider
 the category of reduced 2-operads in the category 
of complexes over the ground field $\gf$ (i.e. a dg 2-operad). 
According to Theorem 5.3 from \cite{Bat}
this category has a closed model structure uniquely determined
by the conditions that
the class of fibrations (resp.  weak equivalences) should consist of all maps $f:\cO_1\to \cO_2$ satisfying:
given any 2-tree $\t$,  the induced map of complexes
 $f:\cO_1(\t)\to \cO_2(\t)$ is  component-wise surjective (resp. is quasi-isomorphism).
Same Theorem 5.3 from \cite{Bat} implies a model structure in the category
of topological reduced 2-operads. 

Let now $\J(S)\bighat{}$ be the   category of  contravariant functors  from $\J(S)$ to the category
of complexes over $\gf$.  One has a model structure on  $\J(S)\bighat{}$ 
which is defined in a similar
way: Let $F_1,F_2\in \J(S)\bighat{}$.    The class of fibrations (resp.  weak equivalences)
 by definition  consists of all maps $f:F_1\to F_2$ satisfying:
given any 2-tree $\t\in \J(S)$,  the induced map of complexes
 $f:F_1(\t)\to F_2(\t)$ is  component-wise surjective (resp. is quasi-isomorphism).

A functor $F\in \J(S)\bighat{}$ is called {\em cofibrant} if the natural
map from the initial object $0\to F$ is cofibrant.

\begin{Lemma}\label{model} 
For every dg reduced 2-operad $\cO$ there exists a cofibrant operad
$\R\cO$ and a weak equivalence $f:\R\cO\to \cO$ such that  for every finite set, the functor
$\R\cO_S\in \J(S)\bighat{}$ is cofibrant.
\end{Lemma}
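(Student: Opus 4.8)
The plan is to construct $\R\cO$ by a standard small-object/cell-attachment argument adapted to the category of dg reduced 2-operads, while keeping careful track of what happens on each functor $\cO_S$ under the symmetrization formula \eqref{sym}. First I would recall that the category of dg reduced 2-operads carries the transferred (projective) model structure of \cite{Bat} recalled in \ref{secmodel}, so a cofibrant replacement $f:\R\cO\to\cO$ exists by the general machinery: one runs the small-object argument on the set of generating cofibrations, obtaining $\R\cO$ as a transfinite composition of pushouts of free 2-operads on generating cofibrations of complexes. The nontrivial point is the extra clause: that for every finite set $S$ the functor $\R\cO_S\in\J(S)\bighat{}$ is cofibrant in the projective model structure on $\J(S)\bighat{}$.

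The key step is to understand how the free reduced 2-operad functor interacts with the restriction functors $\cO\mapsto\cO_S$. Since $\cO_S$ is by definition the contravariant functor $\t\mapsto\cO(\t)$ on the Milgram poset $\J(S)^{\opp}$, and colimits of reduced 2-operads are computed ``pointwise enough'' (the reduced condition pins down the values on trees with $|S_\tau|\le 1$, and the free construction only adds cells in the range $|S_\tau|\ge 2$), I would show: (i) for a free reduced 2-operad $F(K)$ on a complex $K$ placed at a 2-tree $\t_0$, the associated functor $F(K)_S$ is a coproduct of representable-type functors $\J(S)(-,\t)\otimes K^{\otimes m}$ tensored with suitable index sets, hence cofibrant in $\J(S)\bighat{}$ whenever $K$ is cofibrant; and (ii) pushouts and transfinite compositions along generating cofibrations are sent by $(-)_S$ to pushouts and transfinite compositions along cofibrations in $\J(S)\bighat{}$, because $(-)_S$ is a restriction-type functor that preserves these colimits. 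Combining (i) and (ii), the transfinite tower building $\R\cO$ maps under $(-)_S$ to a tower of cofibrations in $\J(S)\bighat{}$ starting from the initial object, so $\R\cO_S$ is cofibrant; and $f:\R\cO\to\cO$ is a weak equivalence by construction of the small-object argument.

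The main obstacle, I expect, is step (i): verifying that the free reduced 2-operad on a cell, when restricted to $\J(S)$, decomposes into a coproduct of projectively cofibrant (in particular, cofibrant-cell-like) functors. This requires an explicit combinatorial description of the free reduced 2-operad — essentially a sum over ``decorated'' 2-trees obtained by grafting copies of $\t_0$ into the generic reduced 2-tree — and then checking that the induced presheaf on the Milgram poset $\J(S)$ is built from representables $\J(S)(-,\t)$ by (co)products and tensoring with cofibrant complexes. One must be careful that the reduced-2-operad coproduct differs from the free 2-operad coproduct only in low arity where everything is forced to be $\triv$, so no cells are added there and the poset-presheaf has no ``bad'' terms. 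Once this combinatorial bookkeeping is in place, the rest is the formal small-object argument together with the standard fact that in a projective functor category, pushouts of generating cofibrations along arbitrary maps, and transfinite compositions thereof, yield cofibrations.
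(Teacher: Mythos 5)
The paper does not actually prove this lemma: its ``proof'' is the single line ``See the proof of Theorem 7.1 in \cite{Bat1}.'' So your proposal is attempting genuinely more than the text does, and the overall strategy you outline --- build $\R\cO$ by the small-object argument as a transfinite composition of pushouts of free reduced 2-operads on generating cofibrations, then track what this cellular structure does to each $\cO_S$ over the Milgram poset $\J(S)$ --- is the standard one and is in the spirit of Batanin's cited argument. Your step (i) is also essentially right: the free reduced 2-operad evaluated on the trees of $\J(S)$ decomposes as a coproduct of down-closed representable presheaves $\J(S)(-,\t)$ tensored with (co)products of the generating complexes, and these are projectively cofibrant.

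There is, however, a genuine gap in your step (ii). You justify it by saying that $(-)_S$ ``is a restriction-type functor that preserves these colimits.'' It is not: $\cO\mapsto\cO_S$ is assembled from evaluations of the 2-operad at individual 2-trees, and evaluation at a fixed tree does \emph{not} commute with pushouts of (2-)operads. A pushout of $\cO$ along a free map $F(K)\to F(L)$ creates, in the component at a tree $\t$, all operadic composites of old operations of $\cO$ with the new generators from $L$, distributed over all decompositions of $\t$; this is emphatically not the pushout of the components. The standard repair --- which is what Batanin's Theorem 7.1 in \cite{Bat1} actually carries out, and what your write-up omits --- is to filter such a pushout by the number of new generators used, show that each associated-graded stage of the filtration is an honest pushout in $\J(S)\bighat{}$ along a map of the cofibrant type produced in your step (i) (this uses an enveloping-operad-style description of the filtration quotients), and then conclude that $\R\cO_S$ is a transfinite composition of projective cofibrations. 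Without this filtration argument the deduction ``cellular in 2-operads $\Rightarrow$ cellular in $\J(S)\bighat{}$'' does not follow, so as written the proof of the key clause of the lemma is incomplete, even though the intended conclusion is correct and reachable by this route.
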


\begin{proof} 
See the proof of Theorem 7.1 in \cite{Bat1}.
\end{proof}

\subsubsection{ Algebras over colored 2-operads}
Given an $\chrom$-colored 2-operad $\cO$ and an $\chrom$-family
of objects $X_c\in \C$, $c\in \chrom$, we
define an $\cO$-algebra structure on
$\{X_c\}_{c\in \chrom}$ as a map
$$
f:\cO\to \des\ \full(\{X_c\}_{c\in\chrom}),
$$
where $\full(X)$ is the full colored
symmetric endomorphism operad of $X$. If $\cO$ is unital,
then we additionally require that $f$ matches
the units.
\subsection{Batanin's theorem}

We observe that the operad $\triv$ is
reduced and set $\R\triv \to \triv$ to be its
cofibrant resolution in the category of
reduced  2-operads.

\begin{Theorem}  [Theorem 7.2, 7.3, \cite{Bat}]\label{bat33}
The symmetric operad $\sym\R\triv$ is weakly equivalent to
the operad of little discs  if $\C$ is the 
category of topological spaces,
and to the
singular chain operad of little discs
 if $\C$ is the category of
chain complexes of $\k$-vector spaces.
\end{Theorem}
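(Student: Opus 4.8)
The plan is to deduce this from Batanin's explicit formula for the symmetrization functor together with a classical cellular model for configuration spaces. Recall that $\sym$ is the left adjoint of $\des$ described in Section \ref{symmetr1}; the key input is the colimit formula (\ref{sym}): for each finite set $S$,
$$
\sym\R\triv(S)\;=\;\colim_{\J(S)}\R\triv_S\,,
$$
where $\J(S)$ is the poset of pruned $2$-trees $\t:S\to T$ (equivalently, of ordered set partitions of $S$), whose opposite is the Milgram poset. So the first step is simply to unwind this formula; the real work then lies in computing the homotopy type of the right-hand side, functorially in $S$ and compatibly with the operadic composition maps.

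For the homotopy type, I would turn the strict colimit into a homotopy colimit. Lemma \ref{model} guarantees that $\R\triv_S$ is a cofibrant object of $\J(S)\bighat{}$, so the canonical map $\hocolim_{\J(S)}\R\triv_S\to\colim_{\J(S)}\R\triv_S$ is a weak equivalence. Since $\R\triv\to\triv$ is a weak equivalence of reduced $2$-operads, $\R\triv_S\to\triv_S$ is a termwise weak equivalence of diagrams over $\J(S)$, where $\triv_S$ is the constant diagram at $\bu$; homotopy colimits being invariant under such maps, we obtain
$$
\sym\R\triv(S)\;\simeq\;\hocolim_{\J(S)}\bu\,.
$$
This last object is the classifying space $B\J(S)$ of the poset $\J(S)$ in the topological case, and the singular chain complex of $B\J(S)$ in the dg case, carrying in both cases the $\Sigma_S$-action coming from relabelling of $S$.

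The third step — which I expect to be the genuinely substantial one — is to identify $B\J(S)$, naturally in $S$ and compatibly with operadic composition, with the configuration space $\conf_{|S|}(\bbR^2)$, equivalently with the little discs space $E_2(|S|)$ (and with its singular chains in the dg setting). Here one invokes the Fox--Neuwirth/Fuks regular CW decomposition of $\conf_n(\bbR^2)$: its cells are indexed exactly by ordered set partitions of the $n$ points, its face poset is (opposite to) $\J(S)$, and the embeddings of configuration spaces underlying the operad structure on $E_2$ are stratum-compatible, matching the composition maps that $\sym\R\triv$ inherits from $\R\triv$ via the universal property of colimits. Assembling these identifications produces the asserted zig-zag of weak equivalences of symmetric operads.

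The main obstacle is precisely this last compatibility: establishing not merely that $B\J(S)\simeq\conf_{|S|}(\bbR^2)$ for each $S$, but that the equivalences can be chosen $\Sigma$-equivariantly and made to intertwine the two operad structures, including the subtlety that the Milgram poset and the Fox--Neuwirth stratification must be matched on the nose. This is where Batanin's argument does the real work — relying on Berger's cellular chain model for $E_n$ and careful bookkeeping of the strata — and for the full details I refer to \cite{Bat}.
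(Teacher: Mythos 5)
Your outline is sound, but it follows a different route from the one the paper sketches. The paper's argument (following Batanin) never computes $\sym\R\triv$ directly: it first replaces $\R\triv$ by the Getzler--Jones $2$-operad $\GJ$, a weakly equivalent contractible cofibrant $2$-operad constructed as a sub-$2$-operad of $\des(\FM)$, where $\FM$ is the Fulton--MacPherson model of little discs; the adjunction $\sym\dashv\des$ then turns the inclusion $\GJ\hookrightarrow\des(\FM)$ into a map $\sym(\GJ)\to\FM$ which is shown to be an \emph{isomorphism}, so the weak equivalence with little discs is inherited from $\FM$ on the nose. You instead unwind the colimit formula (\ref{sym}), pass to the homotopy colimit via Lemma \ref{model}, reduce to $\hocolim_{\J(S)}\bu\simeq B\J(S)$, and then identify the nerve of the Milgram poset with $\conf_{|S|}(\bbR^2)$ via Fox--Neuwirth strata. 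What the paper's route buys is that the hard identification is packaged once and for all into the statement $\sym\GJ\cong\FM$, with equivariance and operadic compatibility automatic because everything lives inside $\FM$ from the start; what your route buys is that it is exactly the technique this paper itself deploys later for the subposets $\J(\vs)$ (Proposition \ref{cofib-resol} and the proof of Theorem \ref{SymRbr-braces}), so it generalizes directly to the relative statements needed there.

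One caution on your third step: the Fox--Neuwirth strata of the \emph{open} configuration space do not form a regular CW decomposition --- the strata are open cells whose closures in $\conf_{|S|}(\bbR^2)$ are noncompact, and there are no attaching maps from spheres. The correct mechanism, used both by Batanin (on the compactification, via the Getzler--Jones cells) and by this paper (on the open space, via the functor $\t\mapsto\bigcup_{\t\to\wt}\FN_{\wt}$), is to exhibit a cofibrant diagram over $\J(S)$ with contractible values whose strict colimit is $\conf_{|S|}(\bbR^2)$; this replaces the naive appeal to a face poset of a CW structure and is where the equivariance and operadic compatibility you flag are actually verified.
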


Let us sketch its proof for $\C$ being the
category of topological spaces.

First, we observe, that the  2-operad $\R\triv$
can be replaced with any weakly equivalent
one. Batanin uses 
 the Getzler-Jones 2-operad $\GJ$\,.

This  2-operad is constructed in \cite{Bat} as
a sub  2-operad of the desymmetrization
$\des(\FM)$ of the Fulton-MacPherson version $\FM$ of
little discs operad

Then, since the desymmetrization functor $\des$
admits the left adjoint $\sym$, the inclusion
$$
\GJ \hookrightarrow \des(\FM)
$$
produces the following map
$$
\sym (\GJ) \to \FM
$$
which can be shown to be an isomorphism,
hence a weak equivalence. This completes the
proof. The case when $\C$ is the category of
chain complexes is treated by applying the
singular chain functor.

\section{Swiss Cheese (SC) Operads} \label{screview}
In this section we discuss SC-modifications of the notions of colored operad and colored 2-operad.
We conclude with formulating the SC version of Batanin's theorem on the symmetrization of
the trivial 2-operad.

\subsection{Symmetric Swiss Cheese type operads}
In this subsection we recall from \cite{Bat} the notion of
the symmetric Swiss Cheese type operads. Here, we call them
symmetric SC operads for short.

Let $\chrom:=\{\a,\c\}$ be the set of colors. {\em  An SC-set} is a
collection of the following data:

---a finite set $S$; 

---a map $\chi_S:S\to \chrom$;

---an element  $c_S\in \chrom$.

These data should satisfy:

--- if $\a\in \chi(S)$, then $c_S=\a$.

{\em A map of SC-sets}  is a usual map $P:S_1\to S_2$ satisfying:
if $s_1\in S_1$ is such that $\chi_{S_1}(s_1)=\a$, then
$\chi_{S_2}(P(s_1))=\a$.
Given such  $P$ and $s_2\in S_2$, $P^{-1}s_2$ is naturally an
SC-set: $\chi_{P^{-1}s_2}$ is the restriction
of $\chi_{S_1}$; $c_{P^{-1}s_2}=\chi_{S_2}(s_2)$.

{An SC-operad in a symmetric monoidal category $\C$}
 is a functor $\cO$ from the groupoid of SC-sets and
 their color preserving bijections to $\C$.

For every map $P:S_1\to S_2$ of SC-sets, there should be given a composition map
$$
\cO(S_2)\otimes\bigotimes\limits_{s_2\in S_2}\cO(P^{-1}s_2)\to\cO(S_1)\,.
$$
These compositions should satisfy the associativity law which is  similar to that for
usual operads. It is clear how to define {\it unital} symmetric SC operads.
In this paper all our symmetric SC operads are unital.

\subsubsection{Reduced symmetric SC operads}
\label{reduced-SC-symm}
We say that a unital symmetric SC operad $\cO$ is {\it reduced}
if for every SC set $S$ with at most one element
$$
\cO(S) \cong \bu\,.
$$
For the one element SC sets these isomorphisms
should coincide with the unit maps.
Furthermore, the operadic compositions of
zero-ary and unary operations send products
of $\bu$ to $\bu$ via the corresponding isomorphism
of the symmetric monoidal category.

\subsubsection{Colored symmetric SC-operads}
Fix two sets of colors $\chrom_c$
and $\chrom_a$. {\em A colored SC-set $S$} is a map
$\chi_S:S\to \chrom_c\sqcup \chrom_a$ and an element
$c_S\in \chrom_c\sqcup \chrom_a$ satisfying: if $\chi_S^{-1}\chrom_a$ is non-empty,
then $c_S\in \chrom_a$.

We declare that there are no maps between colored SC-sets $S_1$ and
$S_2$ if $c_{S_1} \neq c_{S_2}$\,. On the other hand if
$c_{S_1} = c_{S_2}$ then a map from $S_1$ to $S_2$ is a map of
sets $P:S_1\to S_2$ satisfying the property:
{\it for any $s_2\in S_2$,
the set $P^{-1}s_2$ along with
the map $\chi_{S_1 }|_{P^{-1}s_2}:P^{-1}s_2\to \chrom_c\sqcup \chrom_a$ and
the element $c_{P^{-1}s_2 }:=\chi_{S_2}(s_2)$ is a colored SC-set.}
Thus, given a map of colored SC-sets $P:S_1\to S_2$ and $s_2\in S_2$, we
have a colored SC-set $P^{-1}s_2$.

{\em A colored SC-operad $\cO$ in a SMC $\C$} is a functor $\cO$ from
the isomorphism groupoid of colored SC-sets to
$\C$ along with the composition maps: given  a map $P:S_1\to S_2$ of colored sets,
one should have a map
$$
\cO(S_2)\otimes \bigotimes\limits_{s_2\in S_2}\cO(P^{-1}s_2)\to \cO(S_1)
$$
satisfying the associativity property as above.

The operad $\sO$ is an example of colored SC operad.
Indeed, let $\chrom_c:=\nat$ and $\chrom_a:=\{\a\}$
and let $S$ be a colored SC-set. Let $S_{\c}:=\chi^{-1}\chrom_c$ and
$S_{\a}:=\chi^{-1}\chrom_a$.

In the case $c_S\in \chrom_c$, set
$$
\sO(S):=\sO(S)^{c_S}_{\{\chi(s)\}_{s\in S}};
$$
if $c_S=\a$, we set
$$
\sO(S):=\sO(S_{\c},S_{\a})_{\{\chi(s)\}_{s\in S_{\c}}}.
$$

\subsection{SC 2-operads}
Let us review Batanin's definition of a Swiss Cheese type (or simply SC) 2-operad
from \cite{Bat}.
This notion is obtained via modifying the definition of a usual 2-operad as follows:

1) {\em An SC-ordinal} is any non-empty ordinal; its minimum is considered to be marked.

2) {\em A map of SC-ordinals} is a monotonous map preserving the minima.

3) {\em An SC 2-tree $\t$} is a monotonous map $\t:S\to T$ where $S$ is a usual ordinal
and $T$ is an SC-ordinal.
A map of SC 2-trees
$$
(\t:S\to T)\to (\t_1:S_1\to T_1)
$$
is a map of sets $P_S:S\to S_1$ as well as a map of SC-ordinals
$P_T:T\to T_1$ such that $\t_1P_S=P_T\t_2$; the map $P_S$ must preserve
the order on each set
$\t^{-1}t$, $t\in T$.
Given $s_1\in S_1$ such that $\t_1(s_1)$ is not the minimum of $T_1$, we define
a usual 2-tree $P^{-1}s_1$ in the same way as for the usual 2-trees
(see the beginning of Sec. \ref{review}); in the case $\t_1(s_1)$ is the
minimum of $T_1$, we naturally get an SC 2-tree $P_S^{-1}s_1$.

4) We define {\em an SC 2-operad in a symmetric monoidal
 category $\C$} as:

--- a functor
$$
\cO: \mbox{\bf{2-trees}}^\times \sqcup \mbox{\bf{SC 2-trees}}^\times\to \C;
$$

--- for every map of 2-trees or SC 2-trees
$P:\t\to \t_1$, there should be given a map
$$
\cO(\t_1)\otimes \bigotimes\limits_{s_1\in S_1} \cO(P^{-1}s_1)\to \cO(\t).
$$

These  maps should satisfy the associativity property which is similar to that
for usual  2-operads.

\subsubsection{Unital SC 2-operads}
In this paper all SC 2-operads are assumed to be unital.

To introduce the notion of unital SC 2-operads we define
$\u_{\c}$ to be the ordinary 2-tree $\pt \to \pt$\,.
We also define $\u_{\a}$ to
be an SC 2-tree in which a 1-element ordinal is mapped into a
one-element SC ordinal.

For every isomorphism $P:\t_1\to \t_2$ of 2-trees or
SC 2-trees every pre-image
$P^{-1}s_2$, $s_2\in S_{\t_2}$, is
either $\u_{\c}$ or $\u_{\a}$.
For every 2-tree $\t$ the pre-image $Q_{\c}^{-1}\, \pt$ of the point for
a unique map $Q_{\c} : \t \to \u_{\c}$ is equal to $\t$\,.
Furthermore, for every SC 2-tree $\t$ the pre-image $Q_{\a}^{-1}\, \pt$ of
the point for a unique map $Q_{\a} : \t \to \u_{\a}$ is also equal
to $\t$\,.

Define {\em a unital SC 2-operad } as an SC
2-operad $\cO$ along with maps $\bu\to
\cO(\u_{\c})$ and $\bu \to \cO(\u_{\a})$\,.
satisfying:

--- for every isomorphism $P:\t_1 \to \t_2$,
of 2-trees or SC 2-trees the map
$$
\cO(\t_2)\cong \cO(\t_2)\otimes {\bu
}^{\otimes S_{\t_2}}\to \cO(\t_2)\otimes
\bigotimes\limits_{s_2\in S_{\t_2}}
\cO(P^{-1}s_2)\to \cO(\t_1)
$$
coincides with the map $\cO(\t_2)\to
\cO(\t_1)$ induced by $P^{-1}$ from the
definition of  $\cO$ as a functor from
the corresponding groupoid.

--- for every 2-tree $\t$  the composition
$$
 \cO(\t) \cong \bu \otimes \cO(\t) \to
\cO (\u_{\c}) \otimes \cO(\t) \to \cO(\t)
$$
is the identity on $\cO(\t)$\,.

--- for every SC 2-tree $\t$  the composition
$$
 \cO(\t) \cong \bu \otimes \cO(\t) \to
\cO (\u_{\a}) \otimes \cO(\t) \to \cO(\t)
$$
is the identity on $\cO(\t)$\,.

\subsubsection{} We define the trivial SC
2-operad $\triv$ by setting
$$
\triv(\t)=\bu\,,
$$
for all 2-trees and SC 2-trees $\t$\,.
Here $\bu$ is the unit of the SMC and
the operadic multiplications
are the canonical  maps sending
tensor products of $\bu$ to $\bu$.

\subsubsection{Colored SC 2-operads}\label{chic}
 We define  a colored  SC 2-operad as follows. Fix 2 sets of colors:
$\chrom_c$ and $\chrom_a$. Define a coloring of
 an SC 2-tree $\t_\tau:S_\tau\to T_\tau$ as follows.

First decompose $S_\tau = S_{\tau,a} \sqcup S_{\tau,c}$, where $S_{\tau,a}$
is the $\t_\tau$-preimage of the minimum of $T_\tau$, and $S_{\tau,c}$
is the complement.

{\em A $(\chrom_c,\chrom_a)$-coloring of $\tau$} is
 a
prescription of  maps $\chi_{\tau,c}:S_{\tau,c}\to \chrom_c$;  $\chi_{\tau,a}:
S_{\tau,a}\to \chrom_a$ and an element $c_\tau\in \chrom_a$.

As well as for ordinary colored 2-trees we declare that there are
no maps between colored SC 2-trees $\tau$ and $\tau_1$ if
$c_{\tau} \neq c_{\tau_1}$\,. On the other hand, if $c_{\tau} = c_{\tau_1}$
then a map $P:\tau\to\tau_1$ is just the map of the underlying
SC 2-trees. Then it is clear that
for every $s_1\in S_{\tau_1}$ such that $\t_{\tau_1}s_1$ is the minimum, the
SC 2-tree $P^{-1}s_1$ is naturally $(\chrom_c,\chrom_a)$-colored.
Furthermore, for every $s_1\in S_{\tau_1}$ such that $\t_{\tau_1}s_1$ is not
the minimum, the 2-tree $P^{-1}s_1$ is naturally $\chrom_c$-colored.

We define a $(\chrom_c,\chrom_a)$-colored SC 2-operad as a functor $\cO$ from
the disjoint union of the groupoid of $\chrom_c$-colored 2-trees and the groupoid of
$(\chrom_c,\chrom_a)$-colored SC 2-trees to $\C$. Given a map
$P:\tau\to \tau_1$ of $\chrom_c$-colored 2-trees or $(\chrom_c,\chrom_a)$-colored SC
2-trees there should be given a map
$$
\cO(\tau _1)\otimes \bigotimes\limits_{s_1\in S_1} \cO(P^{-1}s_1)\to \cO(\tau).
$$

The associativity axiom should be satisfied.

\subsubsection{Unital colored SC 2-operads}
As well as SC 2-operads all colored SC 2-operads
are assumed to be unital.

To introduce the notion of unital colored SC 2-operads we define
$\u_c$, $c\in \chrom_c$, be the colored 2-tree $\pt \to \pt$
for which the point $\pt$ has the color $c$ and $c_{\u_c} = c$\,.
Similarly, we define $\u_a$, $a\in \chrom_a$ to be the colored SC 2-tree
in which the one-element ordinal
is mapped into the one-element SC ordinal and all colorings are $a$.

For every isomorphism $P:\tau_1\to \tau_2$ of $\chrom_c$-colored
2-trees or SC 2-trees every pre-image
$P^{-1}s_2$, $s_2\in S_{\tau_2}$, is
either $\u_{c}$ or $\u_{a}$.
For every colored 2-tree or colored SC 2-tree $\tau$
the pre-image $Q_{\tau}^{-1}\, \pt$ of the point for
a unique map $Q_{\tau} : \tau \to \u_{c_{\tau}}$ is equal to $\tau$\,.

Define {\em a unital colored SC 2-operad } as a colored SC
2-operad $\cO$ along with maps $\bu\to
\cO(\u_{c})$ and $\bu \to \cO(\u_{a})$ for all
$c\in \chrom_c$ and $a\in \chrom_a$
satisfying:

--- for every isomorphism $P:\tau_1 \to \tau_2$,
of colored 2-trees or colored SC 2-trees the map
$$
\cO(\tau_2)\cong \cO(\tau_2)\otimes {\bu
}^{\otimes S_{\tau_2}}\to \cO(\tau_2)\otimes
\bigotimes\limits_{s_2\in S_{\tau_2}}
\cO(P^{-1}s_2)\to \cO(\tau_1)
$$
coincides with the map $\cO(\tau_2)\to
\cO(\tau_1)$ induced by $P^{-1}$ from the
definition of $\cO$ as a functor from
the corresponding groupoid.

--- for every colored 2-tree or colored SC 2-tree $\tau$
the composition
$$
 \cO(\tau) \cong \bu \otimes \cO(\tau) \to
\cO (\u_{c_{\tau}}) \otimes \cO(\tau) \to \cO(\tau)
$$
is the identity on $\cO(\tau)$\,.

\subsubsection{Pruned SC 2-operads} A (colored) SC 2-tree $\tau$ is called pruned
if $\text{Im}(\t_\tau)\supset
T_\tau\backslash m_{T_\tau}$, where $m_{T_\tau}$ is the marked minimum of $T_\tau$.

For every colored SC 2-tree $\tau$ there exists a unique up to isomorphism pruned
colored SC 2-tree $\tau'$ and a map $P:\tau'\to \tau$ such that
$P_S:S_{\tau'}\to S_{\tau}$ is a bijection; $P_T$ is injective, and $P$ induces an
isomorphism of colorings. For every such $P$ the pre-images $P^{-1}s$
are of the form $\u_c$ or $\u_a$, therefore, given a unital operad $\cO$,
we have a map
\begin{equation}
\label{map}
\cO(\tau')\to \cO(\tau)\,.
\end{equation}
By analogy with ordinary 2-operads (see Subsection \ref{pruned})
$\cO$ is called {\em pruned} if all such
maps (\ref{map}) are isomorphisms.

\subsubsection{Reduced SC 2-operads}
We say that a pruned  (non-colored) SC 2-operad $\cO$ is
{\em reduced} if

--- all the unit maps $\bu\to
\cO(\u_c)$, $\bu\to \cO(\u_a)$ are
isomorphisms;

--- $\cO(\tau)=\bu$ whenever $|S_\tau|\leq 1$ so that we
have an identification
$\cO(\tau)=\triv(\tau)$ for all such $\tau$;

--- for every map $P:\tau_1\to \tau_2$ where
$|S_{\tau_1}|,|S_{\tau_2}|\leq 1$ the
corresponding operadic composition law
coincides with that of $\triv$.

Note that, equivalently, one can only
require that the conditions are the case for
pruned 2-trees $\tau$.

\subsection{Desymmetrization}\label{coloredSC}
Given a symmetric SC-operad
$Q$, Batanin defines its desymmetrization $\des Q$ by setting
$\des Q(\t):= Q(S_\t)$ for all 2-trees and SC 2-trees $\t$\,.
Here $S_\t$ is treated as an SC-set as follows:

--- if $\t$ is a usual 2-tree then we define  all the colorings to be $\c$;

--- if $\t$ is an SC 2-tree, we set $\c_{S_\t}:=\a$ and we give the preimage
of marked element of $T_\t$ the color $\a$, the remaining elements of $S_\t$
receive color $\c$.

Given a reduced symmetric
SC-operad $Q$, its desymmetrization
$\des Q$ is a  reduced SC 2-operad  so that
 $\des$ is a functor from the category of reduced symmetric
SC operads to that of reduced SC 2-operads.

\subsubsection{}  In the same spirit, one defines the desymmetrization of a
colored symmetric SC-operad $\cO$.
Let $\tau$ be a colored SC 2-tree; we then see that $S_\tau$ is
a colored SC-set in the natural way:
the map
$\chi_{S_\tau}\Big|_{S_{\tau,c}}:=\chi_{\tau, c}$
and $\chi_{S_\tau}\Big|_{S_{\tau,a}}:=\chi_{\tau, a}$. Finally,
$c_{S_\tau}:=c_\tau$.
We then set
$$
\des(\cO)(\tau):=\cO(S_\tau)
$$
with the composition law determined by that in $\cO$.

\subsection{Symmetrization} The content of this section is a straightforward SC generalization
of Sec \ref{symmetr1}.

Under an assumption that  SMC $\C$ has small colimits,
the functor $\des$ has a left adjoint $\sym$\,.
We have  a description
of the functor $\sym$ for the
category of reduced SC 2-operads which is similar to that for
reduced 2-operads (see. Sec \ref{symmetr1}).

For every SC set $S$ we define a category $\J(S)$\,.

If $c_{S} = \c$, 
 then the category $\J(S)$ is the same as in Sec \ref{symmetr1}:
the objects of $\J(S)$ are pruned
$2$-trees of the form
$$
\t : S \to T\,.
$$
Morphisms are the maps between $2$-trees
which induce the identity map on $S$\,

If $c_{S} = \a$ then objects of $\J(S)$ are
pruned SC 2-trees $\t : S\to T$ such that the preimage of
the minimal element of $T$ coincides with $S_{\a}= \chi^{-1}(\a)$\,.
Morphisms are the maps between SC $2$-trees
which induce the identity map on $S$\,.

As in the non SC case,  all categories $\J(S)$ are in fact posets.

Let $\cO$ be a reduced SC 2-operad.
For every SC set $S$ the operad $\cO$
gives us an obvious (contravariant) functor from
the category $\J(S)$
to the underlying SMC $\C$\,. We denote this
functor by $\cO_S$\,.

According to Theorem 9.1 from \cite{Bat} we have
\begin{equation}
\label{sym-SC}
\sym \cO (S) = \colim_{\J(S)} \cO_S \,.
\end{equation}

The operadic multiplications of $\sym \cO$ can
be easily obtained from those of $\cO$ using
the properties of colimits.

\subsubsection{Model structure} The category of dg pruned SC 2-operads
has a model structure which is defined in the same way as in Sec \ref{secmodel} 
Same is true for the model structure on the category $\J(S)\bighat{}$ of contravariant
functors from $\J(S)$ to the category of compexes over $\gf$.

Lemma \ref{model} holds true in the SC context.
\begin{Lemma}\label{scmodel} 
For every dg reduced SC 2-operad $\cO$ there exists
a cofibrant dg reduced SC 2-operad $\R\cO$ and a quasi-isomorphism $f:\R\cO\to \cO$
such that the functors $\R\cO_S\in \J(S)\bighat{}$ are cofibrant
for any SC set $S$.
\end{Lemma}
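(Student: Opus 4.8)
The plan is to mimic, step for step, Batanin's proof of the non-SC statement (Lemma~\ref{model}, which is the proof of Theorem~7.1 in \cite{Bat1}) and to isolate the handful of places where the Swiss Cheese combinatorics enters. First I would record that the model structure on dg reduced SC 2-operads (the SC analogue of the one in Subsection~\ref{secmodel}) is cofibrantly generated and is transferred along the free/forgetful adjunction $F\dashv U$ between reduced SC 2-operads and reduced SC 2-collections, i.e.\ collections supported on pruned 2-trees and pruned SC 2-trees $\t$ with $|S_\t|\ge 2$; its generating (acyclic) cofibrations are the maps $F(i)$ for $i$ a generating (acyclic) cofibration of collections. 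Running the small object argument on the map from the initial reduced SC 2-operad to $\cO$ then factors it as a relative cell complex followed by a trivial fibration; I take $\R\cO$ to be the cell complex and $f\colon\R\cO\to\cO$ the trivial fibration. Thus $\R\cO$ is, by construction, a transfinite composition of pushouts of maps $F(A)\to F(B)$ with $A\to B$ a generating cofibration of collections, which already yields that $\R\cO$ is cofibrant and that $f$ is a weak equivalence.

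The substance of the lemma is the additional property that each $\R\cO_S\in\J(S)\bighat{}$ is cofibrant. Restriction of an SC 2-operad to the poset $\J(S)$ --- the functor $\t\mapsto\cO(\t)$ on pruned (SC) 2-trees $\t\colon S\to T$ --- is computed objectwise in $\t$, hence commutes with the pushouts and transfinite compositions used to build $\R\cO$; and the class of cofibrations of the projective model structure on $\J(S)\bighat{}$ is closed under precisely these operations. So it suffices to prove that $F$ carries a cofibration of reduced SC 2-collections to a map of operads whose restriction to each $\J(S)$ is a cofibration of $\J(S)$-diagrams (and, in particular, applied to $0\to X$ with $X$ cofibrant, produces a cofibrant $(FX)_S$). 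As in \cite{Bat1}, this comes down to Batanin's explicit tree formula for the free reduced SC 2-operad: $F(X)(\t)$ is a colimit over a category of $X$-decorated (SC) 2-trees refining $\t$, and --- fixing $S$ and letting $T$, hence $\t$, range over $\J(S)$ --- this formula displays $(FX)_S$ as itself built by free cell attachments indexed by such decorated trees. Combined with the stability of this cell structure under the pushouts in the tower, this gives cofibrancy of $\R\cO_S$.

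The only genuinely new content, and the place I expect the real work to sit, is the SC bookkeeping inside the tree formula. For $c_S=\c$ the category $\J(S)$ and every combinatorial ingredient are literally those of Subsection~\ref{symmetr1} (as the excerpt already notes), so Batanin's argument applies verbatim. For $c_S=\a$ one must instead use the SC version of $\J(S)$, whose objects are pruned SC 2-trees $\t\colon S\to T$ with $\t^{-1}(m_T)=S_\a$; here I would need the SC analogue of the tree formula, expressing $F(X)(\t)$ for an SC 2-tree $\t$ as a colimit over SC-decorated 2-trees refining $\t$ in which the $\a$-colored elements of $S$ are exactly the leaves lying over the marked minimum of $T$. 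The task is to phrase this SC formula so that it has the same free-cellular shape as in the non-SC case; once that is in place, the cofibrancy argument of the previous paragraph runs word for word. In other words, the obstacle is purely combinatorial --- checking that the SC refinement poset is assembled in the same manner as the Milgram poset --- and requires no new homotopy-theoretic input.
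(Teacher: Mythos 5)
Your proposal is correct and takes essentially the same route as the paper, whose entire proof of this lemma is the remark that it is ``similar to the proof of Lemma~\ref{model}'', which in turn just points to Batanin's proof of Theorem~7.1 in \cite{Bat1}. Your write-up is a faithful expansion of that citation: the transferred cofibrantly generated model structure, the small object argument applied to $0\to\cO$, and the tree formula for the free reduced (SC) 2-operad restricted to $\J(S)$ are precisely the ingredients of Batanin's argument, with the SC bookkeeping being the only genuinely new (and purely combinatorial) content, exactly as you say.
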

\begin{proof} Similar to the proof of Lemma \ref{model}.
\end{proof}

\subsubsection{Batanin's theorem}

\begin{Theorem} [Theorem 9.2, 9.4, \cite{Bat}]\label{bat3}
The symmetric SC operad $\sym\R\triv$ is weakly equivalent to
Voronov's Swiss Cheese operad if $\C$ is the
category of topological spaces,
and to the
singular chain operad of Voronov's Swiss
Cheese operad if $\C$ is the category of
chain complexes of $\k$-vector spaces.
\end{Theorem}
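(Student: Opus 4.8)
The plan is to follow Batanin's argument in \cite{Bat}, in complete parallel with the non-SC sketch given above for Theorem \ref{bat33}. I treat the topological case first; the chain-complex statement then follows by applying the singular chains functor, which is monoidal and carries weak equivalences of spaces to quasi-isomorphisms.

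First I would replace the cofibrant resolution $\R\triv$ by a convenient weakly equivalent reduced SC 2-operad. Since, with respect to the model structures of Section \ref{secmodel} and its SC analogue, $\sym$ is a left Quillen functor and $\R\triv\to\triv$ is a cofibrant resolution, $\sym\R\triv$ is weakly equivalent to $\sym$ of any reduced SC 2-operad that is weakly equivalent to $\triv$ and whose associated functors over the posets $\J(S)$ are cofibrant. Batanin's choice is the Swiss Cheese Getzler--Jones 2-operad $\SCGJ$, constructed in \cite{Bat} as a sub SC 2-operad of the desymmetrization $\des(\SCFM)$ of the Fulton--MacPherson compactification $\SCFM$ of Voronov's operad. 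At this point I would recall the two facts one needs about $\SCGJ$: that each of its spaces is contractible (it is a point, or a cell of a compactified configuration space that deformation retracts to a point), so that $\SCGJ\simeq\triv$; and that $\SCGJ_S$ is cofibrant over $\J(S)$ for every SC set $S$, so that the colimit formula (\ref{sym-SC}) computes $\sym\SCGJ$ on the nose, with no derived correction.

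Next, by the adjunction $\sym\dashv\des$, the inclusion $\SCGJ\hookrightarrow\des(\SCFM)$ produces a map of symmetric SC operads
\[
\sym\SCGJ\longrightarrow\SCFM\,,
\]
and the core step is to show this map is an isomorphism. Over a colored SC set $S$ this reduces to identifying $\colim_{\J(S)}\SCGJ_S$ with $\SCFM(S)$: the boundary stratification of the Fulton--MacPherson Swiss Cheese configuration space attached to $S$ is indexed precisely by the Milgram-type poset $\J(S)$ of pruned SC 2-trees over $S$, and the gluing of the closed strata matches the operadic composition that the colimit encodes. Combining this with the fact that $\SCFM$ is weakly equivalent to Voronov's $\SC_2$ (a general property of the Fulton--MacPherson compactification) gives
\[
\sym\R\triv\;\simeq\;\sym\SCGJ\;\cong\;\SCFM\;\simeq\;\SC_2
\]
in the topological case, and then the singular chains functor yields the chain-level statement.

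The hard part will be the identification $\sym\SCGJ\cong\SCFM$. One must verify that the combinatorics of the poset $\J(S)$ of pruned SC 2-trees --- in particular the SC constraint that the $\a$-colored inputs $S_\a$ always lie over the marked minimum of the target, which records that the corresponding configuration points sit on the bounding half-plane --- exactly matches the incidence geometry of the boundary faces of the compactified Swiss Cheese configuration spaces, for both sets of colors, and that this matching is compatible with the operadic structure. All of this is carried out in \cite{Bat}; here it suffices to invoke the cited results, exactly as in the proof of Theorem \ref{bat33}.
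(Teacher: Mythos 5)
Your proposal is correct and matches the paper's treatment: the paper also proves this statement only by citation, remarking that "Batanin's proof goes along the same lines as his proof of Theorem \ref{bat33}," and your sketch is precisely the SC analogue of the paper's own sketch of that non-SC argument (replace $\R\triv$ by the Getzler--Jones sub-2-operad of the desymmetrized Fulton--MacPherson operad, use the $\sym\dashv\des$ adjunction to get a map to $\SCFM$ which is an isomorphism, and apply singular chains for the algebraic case). Nothing further is needed.
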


Batanin's proof goes along the same lines as his proof of Theorem \ref{bat33}.

\section{Linking the operad $\se$ with 2 operads:
a 2-operad $\seq$}\label{seq}
In this section we will define a 2-sub-operad $\seq\subset\des\se$. Next, we  define the SC-version
of $\seq$. 
\subsection{2-operad $\seq$ (cochain part)}
Let us first recall the definition of the cochain part  (i.e. 'non-SC part') of the $\nat$-colored operad $\se$ (Sec \ref{modif})
with the notation slightly changed.

Let 
 $S$ be a finite set and $J$; $I_s,s\in S$ be non-empty finite ordinals. Each element $u$
of  the operadic space
$\se(S)^J_{\{I_s\}_{s\in S}}$
is defined by means of the following data:

--- a total order $>_u$ on the set
 $$\I:=\bigsqcup\limits_{s\in
S} I_s;
$$
a non-decreasing map
$$
Q_u:\I\to J.
$$

 These data should satisfy:

i) the order $>_u$  on $\I$ agrees with those on
each $I_s$;

ii) same as condition 3) from Sec \ref{posle}.

The composition law for the operad $\se$  was defined in Sec \ref{compse}.

Let us now define a colored sub -2-operad $\seq$ of $\des \se$.  Let $\tau$ be a $\nat$-colored
2-tree, which is defined by means of a 2-tree $\t:S\to T$ and its $\nat$-coloring
such that an $s\in S$ has color
$I_s$, where $I_s$ is a
non-empty finite ordinal, and the color of
the result is $J$. More formally, $\chi_\tau(s)=I_s$; $c_\tau=J$.

Let us define  subsets
$$
\seq(\tau):=\seq(\t)^{J}_{\{I_s\},s\in S}\subset  \se(S)^{J}_{\{I_s\},s\in S}
$$
 which consists of all elements $u\in  \se(S)^{J}_{\{I_s\},s\in S}$ satisfying:

--- if $i,k\in I_{s_1}$, $j\in I_{s_2}$,
$s_1\neq s_2$ and $i<_u j<_u k$, then
$\t(s_2)<\t(s_1)$;

--- if $s_1,s_2\in S$, $s_1<s_2$, and
$\t(s_1)=\t(s_2)$, then $I_{s_1}<_u
I_{s_2}$.

One can check  that thus defined subspaces are closed under  all 2-operadic composition maps
so that $\seq\subset \des\se$ is a colored sub-2-operad.   Thus defined colored sub-2-operad coincides with the
colored 2-operad $\seq$ as defined in Sec 6.1 of \cite{dgcat}.  The check that 
$\seq$ is closed  under the 2-operadic compositions follows from the observation that the 2-operadic
composition maps inherited from $\se$ are the same as in loc. cit.

\subsection{SC version of $\seq$}\label{scseq2}
Let us define a colored SC 2-operad $\scseq$ by modifying
 the definition of $\seq$ as follows.

First of all we fix the sets of colors:

--- the set $\chrom_c$ is the same as the set
of colors of $\seq$, i.e. $\nat$;

--- the set $\chrom_a$ is the one element set $\{\a\}$; we
identify a unique element of $\chrom_a$ with the ordinal
consisting of 1 element.

--- Given a usual colored 2-tree $\tau$ we set
$\scseq(\tau):=\seq(\tau)$;

--- given a colored SC 2-tree $\tau$, let us construct
a usual colored 2-tree $\tau'$ with the underlying
2-tree $\t' = \t: S \to T$\,. Define
a map $\chi_{\tau'}: S \to \chrom_c = \nat$ by setting

a) if $s\in S$ and $\t(s)$ is the minimum of $T$,
then we set $\chi_{\tau'}(s)$ to be the
 one-element ordinal;

b) if $s\in S$ and $\t(s)$ is not the minimum of
 $T$, then we set $\chi_{\tau'}(s)=\chi_{\tau,c}(s)$\,,
where $\chi_{\tau,c}$ is a defining map of the
coloring for $\tau$ (see Sec. \ref{chic}).

Lastly, we set $c_{\tau'}$ to be the one-element ordinal.

We then define $\scseq(\tau):=\seq(\tau')$.

Note that we have natural inclusions
$$
\scseq(\tau) \subset \se(S_{\tau})=(\des \se)(\tau),
$$
where $S_{\tau}$ is the colored SC-set corresponding to the colored
2-tree or the colored SC 2-tree $\tau$
as defined in Sec \ref{coloredSC}.
Thus $\scseq$ is a colored SC 2-suboperad of
$\des \se$.

\
\subsection{Totalization: A  dg 2-operad $|\seq|$ and a dg operad $\sco$}\label{condens}
Using the functor of (co)-simplicial totalization we will convert a colored  operad
$\se$ and  a colored  2-operad $\seq$ 
into differential graded operads.  The SC versions will be covered in the next section \ref{condenssc}.

The spaces of unary 
operations in  $\seq$ and $\se$ give a
category structure on $\nat$
$$
\hom(I_1,I_2)=\seq(t_0)_{I_1}^{I_2}=\se_{I_1}^{I_2}\,,
$$
where $t_0:\pt\to\pt$. This category is
isomorphic to the simplicial category
$\Delta$.

The action of these unary operations defines
a polysimplicial/cosimplicial structure on the collection
of operadic sets.

Given a 2-tree $\t:S\to T$,
the collection of sets 
$$
\seq(\t)^J_{\{I_s\}_{s\in
S}},
$$
where $I_s,J$ are non-empty final ordinals,
forms a functor 
$$\seq(\t):\Delta\times (\Delta^\opp)^S\to \Sets,
$$
where $J\in \Delta$ and $I_s\in \Delta^\opp$.

Using the functor
$\S:\Delta\to\complexes$ we can take the
total complexes of these polysimplicial (cosimplicial)
sets, in the same way as in Subsection \ref{202}.
We set
$$
|\seq|(\t):=|\seq(\t)|;
$$
The complexes $|\seq|(\t)$ automatically form a dg-operad.  

Similarly, the sets $\se(S)^J_{\{I_s\}_{s\in S}}$ form a functor
$$\se(S):\Delta\times (\Delta^\opp)^S\to\Sets$$
 so that we can define
$$
|\se|(S):=|\se(S)|.
$$

We have a dg-operad structure on $|\se|$. The embedding $\seq\subset \des\se$ induces a map
$$
|\seq|\to \des|\se|.
$$
\subsection{Extension to the SC-setting: a dg SC 2-operad $\scseq$
and an SC operad $|\se|$}\label{condenssc}
Let us  now extend the construction of $|\seq|$ and $|\se|$ to the SC case.

Given an SC 2-tree $\t:S\to S_1$, let us decompose
$S=S_\c \sqcup S_\a$, where $S_\a$ is the pre-image of the minimum of $S_1$.
Suppose we are given
ordinals $I_s,s\in S_\c$. Using these data, we
 naturally get a colored SC 2-tree
$\tau:=\tau(\t,\{I_s\}_{s\in S_\c})$, where the coloring sets are
$\chrom_c=\nat$ and $\chrom_a=\{\a\}$. Each element of $s\in S_{\c}$ receives
color $I_s$; each element of
$S_{\a}$ gets colored in $\a$; we set $c_\tau=\a$.

We set
$$
\scseq(\t)_{\{I_s\}_{s\in
S_{\c}}}:=\scseq(\tau).
$$

Thus,
given an SC 2-tree $\t:S\to S_1$, we get a polysimplicial set
$$
\scseq(\t): (\Delta^\opp)^{S_{\c}}\to \Sets\,.
$$

Set $|\scseq|(\t):=|\scseq(\t)|$. For $\t$ being a 2-tree we set $|\scseq|(\t):=|\seq|(\t)$.
 This way the dg  2-operad $|\seq|$ extends to an SC 2-operad
$|\scseq|$.

Given an SC set $S=S_\c\sqcup S_\a$ and ordinals $I_s$, $s\in S_{\c}$ we get 
a $\nat$-colored SC-set
which determines the operadic set
$$
\se(S_\c,S_\a)_{\{I_s\}_{s\in S_\c}}.
$$
These sets form a functor
$$
\se(S): (\Delta^\opp)^{S_\c}\to
\Sets.
$$
and we can set $|\se|(S):=|\se(S)|$ thereby getting an SC symmetric operad $|\se|$ which is an  SC exstension
of the symmetric operad $|\se|$ from the previous section \ref{condens}.

The map $\scseq\to \des\se$ of SC 2-operads induces a map
\begin{equation}\label{sc-and-sco}
|\scseq|\to 
\des|\se|
\end{equation}
of dg SC 2-operads.

Since the operad $\se$ is isomorphic to $\op$
the DG operad $\sco$ is isomorphic to the DG operad
$\La = |\op|$ of natural operations on the pair
$(C^{\bul}(A,A) ; A)$\,. Thus we have a map from 
the SC 2-operad $|\scseq|$ to $\des \La$\,.

\section{The SC 2-operad $\scR$ and the SC operad $\scoR$}\label{braces}

It is not hard to see that the
SC 2-operad $\sc$ is pruned.
However, neither $\sc$ nor $|\se|$
is reduced.
In this section we construct a reduced SC 2-operad
$\scR$ which is quasi-isomorphic to the SC 2-operad
$\sc$\,. Similarly, we construct a reduced SC operad
$\scoR$  which is quasi-isomorphic to the SC operad
$|\se|$\,. Both $\scR$ and $\scoR$ are obtained as
suboperads of $\sc$ and $|\se|$, respectively.

As usual we will first make all definition for the non-SC  part and then extend them to
the SC-setting

\subsection{An increasing filtration on the colored 2-operad $\seq$}
\label{ordeq}
Let $\t$ be a 2-tree $\t:S\to T$ and
$$
v\in \seq(\t)_{\{I_s\}_{s\in S}}^{J}\,.
$$

Consider the order $>_v$ on
\begin{equation}
\label{I-S0}
\I := \I_{S}:=\bigsqcup\limits_{s\in S} I_s\,.
\end{equation}
Call two elements $i_1, i_2 \in \I_S$ {\em elementary equivalent}
if $i_1, i_2 \in I_s$ for some $s\in S$ and
for every $i \in \I_{S}$ between $i_1$ and $i_2$ with respect
to the order $<_v$ the element $i$ belongs to $I_s$\,.
In this way we get an equivalence relation on $\I_S$. Denote
by $|v|$ the number of equivalence classes
with respect to this relation.

Let $F_N\seq(\t)_{\{I_s\}_{s\in S}}^J$
 be the subset consisting of all elements
$v$ with $|v|\leq N+|S|$. Roughly speaking,
the difference $|v|- |S|$ counts how many times
the order $<_v$ cuts the ordinals $I_s, ~ s\in S$
into subordinals.

\subsection{Extension of the filtration onto $\scseq$}
\label{ordeq-SC}
Let $\t:S\to T$ be an SC 2-tree.
As above, we set
$S_{\a}$ to be the pre-image of the minimum of $T$ and
$S_{\c}:=S\setminus S_{\a}$.

Recall that an element
$$
v\in \scseq(\t)_{\{I_s\}_{s\in S_{\c}}}
$$
is nothing else but a total order $>_v$ on
\begin{equation}
\label{the-whole-thing}
\bigsqcup\limits_{s\in S_{\c}} I_s\sqcup S_{\a}
\end{equation}
subject to certain conditions.

In order to define the elementary equivalence relation
on (\ref{the-whole-thing}) we replace (\ref{the-whole-thing})
by the isomorphic set
\begin{equation}
\label{I-S}
\I_{S_{\c} \sqcup S_{\a}} = \bigsqcup\limits_{s\in S} I_s\,,
\end{equation}
where $I_s$ is the one element
ordinal for every $s\in S_{\a}$.

Using the total order $>_v$ on $\I_{S_{\c} \sqcup S_{\a}}$ and
the construction from the previous subsection we get the
elementary equivalence relation on the set
$\I_{S_{\c} \sqcup S_{\a}}$ and hence on (\ref{the-whole-thing}).

On the set (\ref{the-whole-thing}) the elementary
equivalence relation can be described as follows. The restriction
of this relation onto $S_{\a}$ coincides with
the identity relation, there is no element of $S_{\a}$ which
is equivalent to an element
$$
i \in \bigsqcup\limits_{s\in S_{\c}} I_s\,.
$$
Finally we call two elements
$$
i_1, i_2 \in \bigsqcup\limits_{s\in S_{\c}} I_s
$$
elementary equivalent iff

--- $i_1, i_2\in I_s$ for some $s\in S_{\c}$\,,

--- for every element $i$ of the set (\ref{the-whole-thing})
between $i_1$ and $i_2$
with respect to the order $<_v$ we have $i \in I_s$\,.

We denote the
number of equivalence classes in (\ref{I-S}) $\I_{S_{\c} \sqcup S_{\a}}$
 by $|v|$ and define the subset
$$
F_N\scseq(\t)_{\{I_s\}_{s\in S_{\c}}}\subset
\scseq(\t)_{\{ I_s\}_{s\in S_{\c}}}
$$
to consist of all elements $v$ with $|v|\leq
N+|S|$.
\subsection{Compatibility of the filtration with the operadic structure}

\begin{Lemma}\label{lmfil} The filtration $F$ is
compatible with operadic compositions on
$\scseq$.
\end{Lemma}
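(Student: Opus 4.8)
The plan is to reduce the statement to a concrete assertion about how the elementary equivalence relation interacts with the composition law for $\se$ (and its SC extension) described in Section~\ref{compse}. Recall that a composition $w$ of elements $v \in \scseq(\t_1)$ and $u_t \in \scseq(P^{-1}t)$ (along a map of SC 2-trees $P:\t \to \t_1$) is characterized by: the order $>_w$ on $\I_{S}$ agrees with each $>_{u_t}$ on the sub-ordinals $\I_{u_t}$, and the assembled map $F:\I_S \to \I_{S_1}$ (which on each block $\I_{u_t}$ is $F_{u_t}$, and which collapses the SC-block to the marked minimum) is non-decreasing with respect to $>_v$. Thus $>_w$ is completely determined by the $>_{u_t}$'s together with $>_v$ "read through $F$". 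The goal is to bound $|w|$, the number of elementary equivalence classes of $>_w$ on $\I_{S_{\c}\sqcup S_{\a}}$, in terms of $\sum_t (|u_t| - |S_{P^{-1}t}|) =: \sum_t \delta(u_t)$ and $|v| - |S_1| =: \delta(v)$; concretely I expect to prove $|w| - |S| \le \sum_t \delta(u_t) + \delta(v)$, which is exactly compatibility of the filtration $F_N$ with operadic composition.

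First I would handle the non-SC (cochain) part, then note that the SC case is formally the same once every $s \in S_{\a}$ is promoted to a one-element ordinal $I_s$ as in \ref{ordeq-SC}, since the elementary equivalence relation on $\I_{S_{\c}\sqcup S_{\a}}$ was defined precisely so as to reduce to the non-SC definition on the enlarged set $\I_{S_{\c}\sqcup S_{\a}} = \bigsqcup_{s\in S} I_s$. For the non-SC part, the key combinatorial step is this: a "cut" of $>_w$ inside a single ordinal $I_s$ (i.e. a consecutive pair in $I_s$ with respect to the ambient order $<_v$... I mean $<_w$ that is separated by some element outside $I_s$) must be caused either by a cut already present in $>_{u_t}$ on $\I_{u_t}$ (where $t = P(s)$, so $s$ lies in the $t$-block), or by an element whose $F$-image strictly separates $F_{u_t}(i_1)$ and $F_{u_t}(i_2)$ in $(\I_{S_1}, >_v)$ — and in the latter case this is "witnessed" by a cut of $>_v$ inside $I_{t}$... here I need to be careful since in the 2-operadic setting $F_{u_t}: \I_{u_t}\to J_t$ where $J_t = \chi_{\tau_1}(t)$ is the single color-ordinal of the image vertex $t$. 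So the counting is: each equivalence class of $>_w$ on $\I_{u_t}$ maps (via $F_{u_t}$) into a single equivalence class of $>_v$ on $J_t \subset \I_{S_1}$, and the number of $>_w$-classes landing in a fixed $>_v$-class of $J_t$, summed appropriately, telescopes to give the bound. The non-decreasing property of $F$ and the agreement of $>_w$ with the $>_{u_t}$'s are exactly what make this map of equivalence classes well-defined and order-compatible.

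The main obstacle I anticipate is bookkeeping the interaction between the three layers — the inner orders $>_{u_t}$, the outer order $>_v$, and the glued order $>_w$ — and in particular making precise the claim that "a new cut of $>_w$ inside $I_s$ that is not a cut of $>_{u_t}$ is charged to a cut of $>_v$". One has to verify that distinct new cuts (across all $s$ in all blocks $t$) are charged to distinct cuts of $>_v$, i.e. the charging map is injective; this uses condition 3) from Section~\ref{posle} (the "non-nesting" condition satisfied by all orders in $\se$), which prevents two ordinals $I_{s_1}, I_{s_2}$ from being interleaved in a pattern $i_1 < i_2 < j_1 < j_2$ and thereby keeps the cut-structure controlled. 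Once the non-SC counting inequality is established, I would check that the SC compositions — where some blocks $P^{-1}t$ are SC 2-trees and $F$ collapses the entire $\a$-part $(P^{-1}t)_{\a}$ to the marked minimum $t$ — introduce no new equivalence classes beyond those already accounted for, because $S_{\a}$-elements are never elementary equivalent to anything in $\bigsqcup_{s\in S_{\c}} I_s$ nor to each other (by the explicit description in \ref{ordeq-SC}); hence the SC bound follows from the non-SC one applied to the $\c$-parts, plus the trivially-counted $S_{\a}$-classes. I would then conclude that each $F_N$ is preserved, i.e. $F$ is compatible with the operadic structure on $\scseq$ (and, restricting, on $\seq$).
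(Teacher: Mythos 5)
Your proposal is correct and follows essentially the same route as the paper: the paper's proof establishes exactly your inequality $|w|-|S|\le \sum_t(|u_t|-|P^{-1}t|)+(|v|-|S_1|)$ by bounding the number of elementary equivalence classes of each block $\I_{u_t}$ in the composed order by $|u_t|+|t|-1$ (where $|t|$ counts the $>_v$-classes of $J_t$) and summing over $t$, which is precisely your ``charge each new cut injectively to a cut of $>_v$ inside $J_t$'' argument, and it likewise dispatches the SC case by the one-element-ordinal reduction. (The only cosmetic difference is that the paper asserts the per-block bound as ``clear,'' whereas you spell out the charging map; note the injectivity really only needs monotonicity of the assembled map $F$ rather than the non-nesting condition 3).)
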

\begin{proof}
Let us first prove Lemma for the filtration on  2-operad $\seq$ (i.e. the 'non-SC'-part).
Consider operadic compositions of the
following type:

Let $P:\t_1 \to \t_2$ be a map of 2-trees,
where $\t_1:S_1\to T_1$ and $\t_2:S_2\to T_2$\,.
Let $P_S:S_1\to S_2$ be the induced map. For
every $s_2\in S_2$, we have a pre-image
$P_S^{-1} (s_2) \subset S_1$\,.

Let $I_{s_1}$, $s_1\in S_1$; $J_{s_2}$,
$s_2\in S_2$ ; $J$ be non-empty ordinals.

Let
$$
w\in \seq(\t_2)^{J}_{\{J_{s_2}\}_{s_2\in S_2}};
$$
$$
v_{s_2}\in
\seq(P^{-1}s_2)_{\{I_{s_1}\}_{s_1 \in
P_S^{-1}(s_2)}}^{J_{s_2}}\,.
$$

Let us denote by $z$ the composition of these elements
and estimate $|z|$.  Suppose that
$$
J_{\vs}\subset (\bigsqcup\limits_{s_2\in
S_2} J_{s_2},>_w)
$$
for $\vs \in S_2$ is split into $|\vs|$ equivalence classes.

Consider the map
$$
\I_{\vs}:=\bigsqcup\limits_{s_1\in
P^{-1}\vs} I_{s_1}\to J_\vs\,.
$$

It is clear that the number of equivalence
classes of
$$
\I_\vs\subset
(\bigsqcup\limits_{s_1\in S_1}
I_{s_1},>_z)
$$
does not exceed $|v_\vs|+|\vs|-1$. Therefore
$$
|z|\leq \sum\limits_{\vs\in S_2}
(|v_\vs|+|\vs|-1) = |w|-|S_2|+\sum\limits_\vs
|v_\vs|\,.
$$
Hence,
$$
|z|-|S_1|\leq |w|-|S_2|+\sum\limits_\vs
(|v_\vs|-|P^{-1}\vs|)
$$
which means that this composition is
compatible with the filtration $F$.
This concludes the proof for  $\seq$.
 The extension to $\scseq$ is straightforward.
\end{proof}

This Lemma, in particular implies that the
polysimplicial/cosimplicial structure on 
$\scseq$ is
compatible with the filtration $F$.
 Therefore, the filtration $F$ descends onto
 the level of total complexes so that
  we have an increasing filtration on each
operadic complex $\sc(\t)$:
$F_N\sc(\t)\subset \sc(\t)$.

\begin{Lemma}
\label{lmfil1}
The filtration $F$ on $\sc$ satisfies
the following properties:
\begin{enumerate}
\item The operadic compositions in $\sc$ are
compatible with the filtration.

\item The complex $F_N\sc(\t)$ is
concentrated in the degrees $\geq -N$\,.

\item The quotient
$F_N\sc(\t) \,\Big/\, F_{N-1} \sc(\t)$ only has  cohomology
concentrated in degree $-N$.
\end{enumerate}
\end{Lemma}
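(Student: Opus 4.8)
The plan is to deduce all three statements from the combinatorial structure of the polysimplicial/cosimplicial sets $\scseq(\t)$ together with the description of the filtration $F$ by the number of ``elementary equivalence classes''. Statement (1) is essentially already proved: Lemma \ref{lmfil} shows the filtration is compatible with operadic composition at the level of polysimplicial/cosimplicial sets, and since the totalization functor $|\,\cdot\,|$ is lax monoidal for the operadic compositions, the induced filtration on $\sc(\t)$ inherits compatibility. So the real content is (2) and (3), which are statements about a single complex $\sc(\t)$ and its filtration quotients.

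For (2) and (3) I would first fix a 2-tree (or SC 2-tree) $\t:S\to T$ and analyze the complex $|\seq(\t)|$ degreewise. Recall that $|\seq(\t)| = \hom_\Delta(\S^\bullet, |\seq(\t)^\bullet|)$ where the inner totalization is taken over the $S$-fold simplicial directions $\{I_s\}_{s\in S}$ and the outer one over the cosimplicial direction $J$. The key observation is that the filtration by $|v|$ interacts well with the simplicial structure: a nondegenerate simplex of $\seq(\t)$ in multidegree $(|I_s|)_{s\in S}$ contributes to homological degree $-\sum_s |I_s|$ (up to the usual normalization shifts), and an element $v$ with $|v| = N + |S|$ forces the ordinals $I_s$ to be cut into at most $N$ pieces in total; a nondegenerate such element must have each $I_s$ realized with at least one generator per elementary class, giving $\sum_s |I_s| \ge $ (number of classes) $\ge$ something, but the real constraint is the reverse: after passing to the associated graded, degenerate simplices are killed and the ``extra'' simplicial dimensions beyond the $N$ cuts are all degenerate. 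This is the mechanism by which $F_N$ lands in degrees $\ge -N$ and by which the associated graded collapses to a single degree.

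More precisely, for (3) I would identify $F_N\sc(\t)/F_{N-1}\sc(\t)$ with the totalization of the subquotient polysimplicial set of elements with \emph{exactly} $N+|S|$ classes — equivalently, $v$'s in which the order $>_v$ performs exactly $N - |S| \cdot 0 \dots$ (exactly $N$ ``cuts'' beyond the trivial ones). For such $v$, once we fix the cut pattern and the interleaving data (which is a finite discrete choice, constrained by condition 3) of Sec.~\ref{posle}), the remaining freedom is precisely the choice of each $I_s$ subject to the prescribed number of pieces, and the totalization of this over the simplicial directions is a tensor product of chain complexes each computing the (reduced, normalized) homology of a product of simplices with prescribed combinatorial type — these are contractible except in the top degree, forcing cohomology concentrated in degree $-N$. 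Here I would lean on the contractibility statement from the Appendix (which the excerpt flags as being needed exactly for this lemma); that Appendix result should say that a certain simplicial set built from orders with a fixed number of cuts has the homology of a point (or a single sphere), and I would apply it componentwise. The cosimplicial direction $J$ and the $\hom_\Delta(\S^\bullet, -)$ totalization then only shift and reindex without introducing new cohomology, because $\S^\bullet$ is a resolution. Statement (2) follows from (3) by downward induction on $N$: $F_0\sc(\t)$ is concentrated in degree $0$ (it is the ``no extra cuts'' piece, built from constant simplicial data), and each extension $F_{N-1} \to F_N \to F_N/F_{N-1}$ adds only degree $-N$, so $F_N\sc(\t)$ lives in degrees $\ge -N$.

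The main obstacle will be the honest verification that the associated graded quotient, after choosing a cut pattern, really is a tensor product of acyclic-except-in-top-degree complexes — i.e. correctly bookkeeping the normalized chain complex $\S$ of the simplex, the signs, and the degeneracies, and checking that condition 3) of Sec.~\ref{posle} (the non-crossing condition on the ordinals $I'_s$) does not destroy the product structure but merely restricts the discrete indexing set. This is exactly where the Appendix contractibility lemma does the heavy lifting, so the proof here should be: (i) set up the identification of $\mathrm{gr}_N$ with a direct sum over cut patterns of totalizations, (ii) cite Lemma \ref{lmfil} and monoidality for (1), (iii) invoke the Appendix for the acyclicity of each summand, (iv) run the induction for (2). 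I would keep the argument at this structural level and push the genuinely combinatorial contractibility computation into the Appendix as the paper already intends.
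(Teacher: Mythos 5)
Your proposal follows essentially the same route as the paper: part (1) is quoted from Lemma \ref{lmfil}, part (2) comes from the observation that elements with too many coincidences inside an equivalence class are degenerate and so do not contribute to the realization, and part (3) is obtained by splitting the associated graded into a direct sum indexed by the discrete ``cut pattern'' data (the paper's surjections $\vs\in D(\t,N)$ subject to conditions {\bf A}--{\bf C}) of copies of a standard complex whose acyclicity is delegated to the Appendix. The only caveat is that your proposed deduction of (2) from (3) by induction needs the associated graded to be concentrated in degrees $\geq -N$ \emph{as a complex}, not merely in cohomology; this does follow from the identification with $|\Xi_{N+|S|}|^{\bul+N}$, but the paper instead proves (2) directly by the degeneracy count, which you also sketch.
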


\begin{proof}

~\\
(1) Follows from the previous lemma.

~\\
(2) Let us first consider the non-SC part of the statement (that is, we will prove the statement
for $\seq$).  Let $\t:S\to T$ be a 2-tree.
Let us consider the simplicial realization
with respect to the lower indices for
\begin{equation}
\label{gadget}
\seq(\t)^J_{\{I_{s}\}_{s \in S}}\,.
\end{equation}
Let
$$
v \in \seq(\t)^J_{\{I_{s}\}_{s \in S}}\,.
$$

According to Subsection \ref{ordeq}
the order $>_v$ on
\begin{equation}
\label{I-S01}
\I_S:=\bigsqcup\limits_{s \in S}
 I_{s}
\end{equation}
defines on $\I_S$ an equivalence relation.

If
\begin{equation}
\label{ineq}
|v|+|J|-1 < \sum_{s\in S} |I_s|\,.
\end{equation}
then there exist
two different but equivalent elements of $\I_S$ which go to
the same element in $J$\,. In this case the element $v$
is obtained from another element by applying a degeneracy.

Thus if inequality (\ref{ineq}) holds for $v$ then
$v$ does not contribute to the realization of (\ref{gadget}).

Therefore if $v$ contributes to the realization
then
$$
|J|-1 -\sum_{s\in S} (|I_s|-1) \geq -|v|+|S|
$$
and hence the complex
$$
F_N |\seq|(\t)
$$
is concentrated in degrees
$$
\geq - N\,.
$$
This finishes the proof for $|\seq|$.

The general SC-case is similar. 
Let $\t : S \to T$ be an SC $2$-tree.
Let $S_{\a}$ be the pre-image of the minimal element of $T$
and $S_{\c} = S\, \setminus\, S_{\a}$\,. An element $v$ of
\begin{equation}
\label{gadget-SC}
\scseq(\t)_{\{I_{s}\}_{s \in S_{\c}}}
\end{equation}
is a total order $>_v$ on
\begin{equation}
\label{the-whole-thing1}
\bigsqcup_{s\in S_{\c}} I_s\, \sqcup \, S_{\a}
\end{equation}
subject to certain conditions.

According to Subsection \ref{ordeq-SC}
the order $>_v$ gives us the elementary equivalence
relation on the set (\ref{the-whole-thing1}).

If at least one equivalence class in (\ref{the-whole-thing1})
contains more than 1 element then the corresponding element
$v$ in (\ref{gadget-SC})
is obtained from another element by applying a degeneracy.
Indeed, only the equivalence classes in
$$
\bigsqcup_{s\in S_{\c}} I_s
$$
may contain more than one element.
And if at least one class contains more than 1 element
then there are distinct elements $i_1, i_2\in I_s$ for
some $s\in S_{\c}$ such that one of them goes right after
another in the ordinal (\ref{the-whole-thing1}).

Therefore, if $v$ contributes to the realization of (\ref{gadget-SC})
then
$$
\sum_{s\in S_{\c}} |I_s| + |S_{\a}| = |v|\,.
$$
Hence
$$
\sum_{s\in S_{\c}} (|I_s|-1) = |v|- |S_{\a}| - |S_{\c}|
$$
or equivalently
$$
- \sum_{s\in S_{\c}} (|I_s|-1) = |S| - |v|\,.
$$
If $v\in F_N \scseq(\t)_{\{I_{s}\}_{s \in S_{\c}}}$ then
the right hand side of the latter equation is $\geq -N$\,.
Thus the complex
$$
F_N \sc(\t)
$$
is concentrated in degrees
$
\geq -N\,.
$

\noindent
(3) Let us first consider the cochain complex
\begin{equation}
\label{gadget-F-N}
F_N\sc(\t) \,\Big/\, F_{N-1} \sc(\t)
\end{equation}
in the case when
$\t: S \to T$ is a usual $2$-tree.

If an element $v$ in (\ref{gadget}) represents
a non-zero vector in (\ref{gadget-F-N}) then
the set $\I_S$ (\ref{I-S01}) has exactly $N + |S|$ equivalence classes.
The total order on $\I_S$ gives a total order
on the set of these equivalence classes.
Hence the set of equivalence classes in $\I_S$ can be
identified with the ordinal $\{1,2, \dots, N + |S|\}$\,.
Furthermore each equivalence class is a subset
of $I_s$ for some $s\in S$\,.

Thus to every such element $v$ in (\ref{gadget})
we assign a surjection
\begin{equation}
\label{surjection}
\vs: \{1,2, \dots, N + |S|\} \to S
\end{equation}
from the ordinal $\{1,2, \dots, N + |S|\}$
to the set\footnote{Recall that $S$ is also equipped with
a total order but in general $\vs$ is not a map of ordinals.} $S$\,.

Not all such surjections can be gotten from the elements
of (\ref{gadget}) representing non-zero vectors
in (\ref{gadget-F-N}). The 2-tree $\t: S \to T$, the definition
of $\scseq$\,, and the definition of the elementary equivalence
relation impose the following conditions on the
possible surjections (\ref{surjection}):
\begin{enumerate}
\item[{\bf A}] $\vs(i)\neq \vs(i+1)$ $\forall ~~ i=1, 2, \dots,  N + |S|-1 $\,,

\item[{\bf B}] if $s\neq \ts$ and
$j_1 < i < j_2$ for $i\in \vs^{-1}(s)$ and
$j_1, j_2\in \vs^{-1}(\ts)$ then
$\t(s) < \t(\ts)$ in $T$\,,

\item[{\bf C}] if $\t(s)= \t(\ts)$ and $s< \ts$ then
all elements of $\vs^{-1}(s)$ are smaller than
all elements of $\vs^{-1}(\ts)$\,.

\end{enumerate}
Let us denote by $D(\t, N)$ the set of all surjections (\ref{surjection})
satisfying above conditions {\bf A},
{\bf B}, and {\bf C}.

It is not hard to see that the elements
of (\ref{gadget}) representing non-zero vectors
in (\ref{gadget-F-N}) and corresponding to the same
surjection (\ref{surjection}) span a subcomplex of
(\ref{gadget-F-N}). Furthermore for every map
(\ref{surjection}) this subcomplex is isomorphic to
the cochain complex $|\Xi_{N + |S|}|^{\bul+N}$\,,
where $|\Xi_k|^{\bul}$ are the complexes
described in the Appendix.

Thus (\ref{gadget-F-N})
is isomorphic to the direct sum of identical cochain complexes
\begin{equation}
\label{direct-sum}
F_N\sc(\t) \,\Big/\, F_{N-1} \sc(\t) \cong
\bigoplus_{\vs \in D(\t, N)} |\Xi_{N + |S|}|^{\bul+N}\,.
\end{equation}

Therefore, due to Proposition \ref{Xi-realize} from the Appendix we have,
\begin{equation}
\label{H-gadget-F-N}
H^{\bul} \Big( F_N\sc(\t) \,\Big/\, F_{N-1} \sc(\t) \Big) =
\begin{cases}
\displaystyle \bigoplus_{\vs \in D(\t, N)} \k \,, ~~ {\rm if} ~ \bul = -N\,, \\[0.5cm]
\phantom{aaaa} 0\,, \qquad {\rm otherwise}\,.
\end{cases}
\end{equation}

Let us now consider the cochain complex
\begin{equation}
\label{gadgetSC-F-N}
F_N\sc(\t) \,\Big/\, F_{N-1} \sc(\t)
\end{equation}
in the case when
$\t: S \to T$ is an SC $2$-tree.

As above $S_{\a}$ is the pre-image of the minimal element
of $T$ and $S_{\c} = S\, \setminus\, S_{\a}$\,.

If an element $v$ of (\ref{gadget-SC}) represents a non-zero
vector in (\ref{gadgetSC-F-N}) then the set
\begin{equation}
\label{I-S1}
\I_{S_{\c} \sqcup S_{\a}} = \bigsqcup_{s\in S_{\c}} I_s\, \sqcup \, S_{\a}
\end{equation}
has exactly $N + |S|$ equivalence classes.
The total order on $\I_{S_{\c} \sqcup S_{\a}}$ gives us a total order
on the set of its equivalence classes.
Hence the set of the equivalence classes can be identified
with the standard ordinal $\{1,2, \dots, N+ |S|\}$\,.
Furthermore, each equivalence class is either a subset
of $I_s$ for some $s\in S_{\c}$ or a one element
subset of $S_{\a}$\,. Thus we get a surjection
\begin{equation}
\label{surjection-SC}
\vs: \{1,2, \dots, N + |S|\} \to S
\end{equation}
from the ordinal $\{1,2, \dots, N + |S|\}$
to the set $S$\,.

As well as in the case of the usual 2-tree
this surjection satisfies above conditions {\bf A},
{\bf B}, and {\bf C}\,.

Let us remark that, since $S_{\a}$ is the pre-image of
the minimal element of $T$\,, conditions  {\bf A},
{\bf B}, and {\bf C} imposed on the surjection
(\ref{surjection-SC}) imply that for every $s\in S_{\a}$
the pre-image $\vs^{-1}(s)$ is a one element set.

As above we denote by $D(\t, N)$ the set of all
surjections (\ref{surjection-SC}) satisfying above
conditions  {\bf A}, {\bf B}, and {\bf C}.

Similarly to the case of a usual $2$-tree the set of
elements of (\ref{gadget-SC}) representing non-zero
vectors in (\ref{gadgetSC-F-N})
splits into the disjoint union of subsets, corresponding
surjections $\vs\in D(\t, N)$\,.
And similarly the elements
of (\ref{gadget-SC}) representing non-zero vectors
in (\ref{gadgetSC-F-N}) and corresponding to the same
map (\ref{surjection-SC}) span a subcomplex of
(\ref{gadgetSC-F-N}).
These subcomplexes are all isomorphic to the cochain complex
$$
|\Xi_{N+ |S_{\c}|}|^{\bul+N, 0}\,,
$$
where the bicomplexes $|\Xi_k|^{\bul, \bul} $ are described
in the Appendix.

It is not hard to see that the complex
$|\Xi_{N+ |S_{\c}|}|^{\bul, 0}$ consists of the field $\k$ placed in
degree $0$\,.

Thus for an SC $2$-tree $\t$ we have
\begin{equation}
\label{H-gadgetSC-F-N}
\Big( F_N\sc(\t) \,\Big/\, F_{N-1} \sc(\t) \Big)^{\bul}  \cong
\begin{cases}
\displaystyle \bigoplus_{\vs \in D(\t, N)} \k \,, ~~ {\rm if} ~ \bul = -N\,, \\[0.5cm]
\phantom{aaaa} 0\,, \qquad {\rm otherwise}
\end{cases}
\end{equation}
and statement $(3)$ holds in this case too.

\end{proof}
\subsection{Definition of the (SC) 2-operad $\scR$}
Using this filtration we give the following
definition.
\begin{Definition}
\label{defi-br}
We define the dg (SC) 2-operad $\scR$ as
a suboperad of $\sc$ with
\begin{equation}
\label{defi-br-eq}
\scR(\t) = \bigoplus_{N\ge 0} G^N \sc(\t)\,,
\end{equation}
where
$$
G^N \sc(\t) = \big\{ v \in F_N\sc(\t)^{-N} ~~\Big| ~~ d v \in
F_{N-1}\sc(\t) \big\}\,,
$$
and $\t$ is either a 2-tree or an SC 2-tree.
\end{Definition}

Lemma \ref{lmfil1} implies that the inclusion
$$
\scR \hookrightarrow \sc
$$
is a quasi-isomorphism.
Furthermore,
\begin{Proposition}
\label{scR-reduced}
The SC 2-operad $\scR$ is reduced.
\end{Proposition}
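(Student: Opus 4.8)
The plan is to check, one at a time, the three conditions defining a reduced SC $2$-operad (Section \ref{screview}), using only Definition \ref{defi-br} of $\scR$ and the structural Lemma \ref{lmfil1}. Since a reduced SC $2$-operad is in particular pruned, I would first note that $\scR$ is pruned. Indeed, for a full injection $P:\t_1\to\t_2$ the prunedness isomorphism $\sc(\t_2)\to\sc(\t_1)$ of the pruned operad $\sc$ is obtained by precomposing with the unary units, and on the combinatorial level it leaves the set $\I$, the total order $>_v$ and conditions {\bf A}, {\bf B}, {\bf C} unchanged ($P_S$ is a bijection, while $P_T$ merely reindexes $T$ by an order preserving injection). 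Hence this isomorphism preserves cohomological degree, the differential and the filtration $F$, so it restricts to an isomorphism $G^N\sc(\t_2)\to G^N\sc(\t_1)$ for every $N$; thus $\scR$ is pruned and it suffices to verify the reduced conditions on pruned $2$-trees and pruned SC $2$-trees.

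The key point is the computation of $\scR(\t)$ when $|S_\t|\le 1$. For such a $\t$ the disjoint union $\I$ of Subsections \ref{ordeq} and \ref{ordeq-SC} contains at most one nonempty ordinal, and a single ordinal forms a single elementary equivalence class; hence $|v|=|S_\t|\in\{0,1\}$ for every element. It follows that $F_0\sc(\t)=\sc(\t)$ while $F_{-1}\sc(\t)=0$, so that $\sc(\t)=F_0\sc(\t)/F_{-1}\sc(\t)$. By Lemma \ref{lmfil1}(2) this complex is concentrated in degrees $\ge 0$, and by Lemma \ref{lmfil1}(3) applied with $N=0$ its cohomology is $\bigoplus_{\vs\in D(\t,0)}\k$ placed in degree $0$. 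Now $D(\t,0)$ consists of the surjections $\{1,\dots,|S_\t|\}\to S_\t$ satisfying {\bf A}, {\bf B}, {\bf C}; for $|S_\t|\le 1$ there is exactly one such surjection and conditions {\bf A}, {\bf B}, {\bf C} hold vacuously, so $H^{\bul}(\sc(\t))=\k$, concentrated in degree $0$.

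From this the three conditions follow. Since $\sc(\t)$ lives in degrees $\ge 0$, Definition \ref{defi-br} gives $G^N\sc(\t)=0$ for $N\ge 1$; and since $F_{-1}\sc(\t)=0$ we get $G^0\sc(\t)=Z^0(\sc(\t))=H^0(\sc(\t))=\k$. Hence $\scR(\t)=\bu$ for every pruned $\t$ with $|S_\t|\le 1$, which is the second reduced condition. Specializing to $\t=\u_{\c}$ and $\t=\u_{\a}$ gives the first: the units of the unital operad $\sc$ are degree-$0$ cocycles in $\sc(\u_{\c})$ and $\sc(\u_{\a})$, hence they lie in $\scR(\u_{\c})=Z^0(\sc(\u_{\c}))$ and $\scR(\u_{\a})=Z^0(\sc(\u_{\a}))$, they are nonzero (else composing with the unit, which is the identity, would force $\sc$ to vanish), and so they span these one-dimensional spaces; thus the unit maps $\bu\to\scR(\u_{\c})$ and $\bu\to\scR(\u_{\a})$ are isomorphisms. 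For the third condition, each $\scR(\t)$ with $|S_\t|\le 1$ is a copy of $\bu$ in degree $0$ spanned by an iterated unit, operadic composition of units is a unit, and the composition maps of $\scR$ are restrictions of those of $\sc$; hence they coincide with the composition maps of $\triv$.

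The step I expect to demand the most care is the uniformity asserted in the second paragraph: one must make sure that the degree bound of Lemma \ref{lmfil1}(2) and the identification $D(\t,0)=\{\ast\}$ apply verbatim to each of the low-complexity shapes — the nullary $2$-tree $\emptyset\to\emptyset$ and its SC counterpart, the unit $2$-tree $\u_{\c}$, the unit SC $2$-tree $\u_{\a}$, and the remaining pruned SC $2$-tree with a single source vertex — in some of which part of the defining data degenerates. Once this bookkeeping is in place, the remainder is purely formal.
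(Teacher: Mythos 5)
Your argument is correct and follows essentially the same route as the paper's own proof: for $|S_\t|\le 1$ the filtration is trivial ($F_{-1}\sc(\t)=0$, $F_N\sc(\t)=\sc(\t)$), so $\scR(\t)$ is the space of degree-$0$ cocycles, which by Lemma \ref{lmfil1} and the identification $H^0(\sc(\t))=\k[D(\t,0)]$ with $D(\t,0)$ a singleton is one-dimensional and spanned by the unit. The extra bookkeeping you supply (prunedness of $\scR$, the explicit check of the third reduced condition, and the degenerate shapes) is material the paper compresses into ``it is not hard to check,'' not a different method.
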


\begin{proof} Let $\t: S\to T$ be a 2-tree or an
SC 2-tree with $|S| \le 1$\,.
The condition $|S|\le 1$ implies that
the filtration $F$ on $\sc(\t)$ is trivial:
$F_{-1}\sc(\t)=0$ and
$F_N\sc(\t)=\sc(\t)$ for all $N\geq 0$.
Therefore, $\scR(\t)$ is simply the
vector space of degree $0$ cocycles in $\sc(\t)$
\begin{equation}
\label{for-small-tree}
\scR(\t)= \sc(\t)^0 \cap \ker d\,.
\end{equation}

Due to Lemma \ref{lmfil1} the complex
$\sc(\t)$ is concentrated in nonnegative
degrees. Hence
$$
H^0\big(\sc(\t)\big) = \sc(\t)^0 \cap \ker d\,.
$$

On the other hand, equations (\ref{H-gadget-F-N}) and
(\ref{H-gadgetSC-F-N}) imply that
$$
H^0\big(\sc(\t)\big) = \k [\,D(\t, 0)\,]
$$
and it is easy to see that if $|S|\le 1$ then
$D(\t, 0)$ is a one element set.

Thus $\scR(\t)$ is indeed isomorphic to $\k$\,.

It is not hard to check that the isomorphisms
$\k \cong \scR(\u_{\c})$ and $\k \cong \scR(\u_{\a})$
are given by the unit maps.
\end{proof}

\subsection{An increasing filtration on $\se$}
We will now define an analogue of the filtration $F$ from the previous subsection
for the (SC) operad $\se$.

Let us first consider the non-SC case.

Let $S$ be a finite set and $J, I_s,s\in S$ be non-empty finite ordinals. 
Every element
$$
u\in \se^{J}_{\{I_s\}_{s\in S}}
$$
gives us a total order $>_u$ on the set
$$
\I_S:=\bigsqcup\limits_{s\in S} I_s\,.
$$
Following Subsection \ref{ordeq} this order
gives us the elementary equivalence relation
on $\I_S$\,.
We denote the number of equivalence
classes in $\I_S$ by $|u|$
and define $F_N\se_{\{I_s\}_{s\in S}}^J$ as
the subset consisting of all elements
 $u\in \se_{\{I_s\}_{s\in S}}^J$ with $|u|\leq N+|S|$.

Let us now extend this definition for the SC-case. Let
$S$ be an SC  with $c_S= \a$ (the case $c_S=\c$ corresponds to the non-SC part and has  just
been considered).
We split $S$ as $S=S_\c\sqcup S_\a$ where
$S_{\c} = \chi^{-1} (\c)$ and $S_{\a} = \chi^{-1}(\a)$\,.

By definition an element
$$
u\in  \se(S_\c,S_\a)_{\{I_s\}_{s\in S_\c}}
$$
is a total order on the set
$$
\I_{S_{\c} \sqcup S_{\a}} =
\bigsqcup\limits_{s \in S_{\c}}
 I_{s} \,\, \sqcup \,\, S_{\a}
$$
subject to certain conditions.

Following Subsection \ref{ordeq-SC} this order
gives us the elementary equivalence relation on
$\I_{S_{\c} \sqcup S_{\a}}$\,.

Let us denote the number of
the equivalence classes in $\I_{S_{\c} \sqcup S_{\a}} $
by $|u|$ and
define
$$
F_N\se(S)_{\{I_s\}_{s\in S_\c}}
$$
as the set of all elements $u\in \se(S)_{\{I_s\}_{s\in S_\c}}$
with $|u|\leq  N+|S|$.

We claim that
\begin{Lemma} The filtration $F$ is
compatible with operadic compositions on
$\se$.
\end{Lemma}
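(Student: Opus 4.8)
The plan is to run, for the colored operad $\se$, the same elementary-equivalence-class count that proves Lemma~\ref{lmfil} --- in fact the estimate we need is exactly the one called ``clear'' there. As usual I would treat the cochain part first and then add the $\a$-coloured slots.

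For the cochain part, consider the composition along $p\colon S=\bigsqcup_{t\in T}S_t\to T$: take $u\in\se(T)^J_{\{J_t\}_{t\in T}}$ and $u_t\in\se(S_t)^{J_t}_{\{I_s\}_{s\in S_t}}$, and let $v$ be their composite. Recall from Subsection~\ref{compse} that $>_v$ restricts to $>_{u_t}$ on $\I_{S_t}:=\bigsqcup_{s\in S_t}I_s\subset\I_S$ and that $G:=\bigsqcup_{t\in T}F_{u_t}\colon\I_S\to(\bigsqcup_{t}J_t,\,>_u)$ is non-decreasing, with $F_v=F_u\circ G$. By the definition of the elementary equivalence relation (Subsection~\ref{ordeq}) every elementary equivalence class of $(\I_S,>_v)$ lies inside a single $I_s$, hence inside exactly one $\I_{S_t}$, so $|v|=\sum_{t\in T}n_t$, where $n_t$ is the number of $>_v$-classes contained in $\I_{S_t}$. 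Since $>_v|_{\I_{S_t}}=\,>_{u_t}$, these $n_t$ classes refine the $|u_t|$ classes of $(\I_{S_t},>_{u_t})$; a class $C\subseteq I_s$ gets subdivided only at a pair $a,b$ of consecutive elements of $\I_{S_t}$ with some element of $\I_{S\setminus S_t}$ strictly between them in $>_v$, and --- since $G$ is non-decreasing --- such a subdivision forces $G(a)$ and $G(b)$ to lie in two different maximal runs of $J_t$ inside $(\bigsqcup_{t'}J_{t'},>_u)$. Walking along $(\I_{S_t},>_{u_t})$, the assignment ``which maximal run of $J_t$ contains $G(\,\cdot\,)$'' is non-decreasing with at most $|t|$ values, where $|t|$ denotes the number of $>_u$-classes of $J_t\subset\bigsqcup_{t'}J_{t'}$; hence at most $|t|-1$ subdivisions occur and $n_t\le|u_t|+|t|-1$. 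Summing over $t$, and using $\sum_{t}|t|=|u|$ (each $>_u$-class of $\bigsqcup_{t'}J_{t'}$ sits in a single $J_t$) together with $\sum_t|S_t|=|S|$, I get
\[
|v|-|S|\ \le\ (|u|-|T|)+\sum_{t\in T}\,(|u_t|-|S_t|),
\]
which says precisely that if $u\in F_M$ and $u_t\in F_{N_t}$ then $v\in F_{M+\sum_tN_t}$.

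The SC composition maps --- those with target $\se(S_\c,S_\a)_{\{I_s\}_{s\in S_\c}}$ --- are handled the same way once, following Subsection~\ref{ordeq-SC}, one replaces $\bigsqcup_{s\in S_\c}I_s\sqcup S_\a$ by $\bigsqcup_{s\in S}I_s$ with $I_s$ the one-element ordinal for $s\in S_\a$; the count above then applies verbatim, and for an index $t$ belonging to the $\a$-part of the target the relevant map $G$ is constant on the corresponding fibre, so that $n_t=|u_t|$, which agrees with the bound for $|t|=1$. The one step I expect to require real care is the middle one --- establishing $n_t\le|u_t|+|t|-1$, i.e.\ that a foreign element can break a run of $\I_{S_t}$ only across a run-boundary of $J_t$ --- but this is no harder than, and essentially identical to, the corresponding estimate in the proof of Lemma~\ref{lmfil}; the passage to the SC setting is routine bookkeeping.
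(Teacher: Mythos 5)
Your proposal is correct and follows essentially the same route as the paper, which proves this lemma by running the argument of Lemma \ref{lmfil} for $\se$: the composite order restricts to the inner orders, each elementary equivalence class of the composite sits inside a single $\I_{S_t}$, and the count $n_t\le |u_t|+|t|-1$ summed over $t$ gives exactly the inequality $|v|-|S|\le (|u|-|T|)+\sum_t(|u_t|-|S_t|)$. The only difference is that you spell out the justification of the key estimate (that a foreign element can split a class only across a run-boundary of $J_t$, because the structure map is non-decreasing), a step the paper declares ``clear,'' and your treatment of the $\a$-colored slots matches the paper's ``straightforward extension.''
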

\begin{proof} Similar to proof of Lemma
 \ref{lmfil}.
\end{proof}

This lemma implies that the filtration
$F$ on $\se$ is compatible with the
polysimpicial/cosimplicial structure.
Therefore, the formula
$$
F_N\, |\se|(S) = |F_N (\se)(S)|
$$
defines an increasing filtration on
the dg SC operad $|\se|$\,.

\begin{Lemma}
\label{lmfil-se}
The filtration $F$ on $|\se|$ satisfies
the following properties:

\begin{enumerate}

\item The operadic compositions in $|\se|$ are
compatible with the filtration $F$\,.

\item The complexes $F_N|\se|(S)$  are concentrated in the
degrees $\geq -N$\,.

\item The cohomology of the quotient
$F_N|\se|(S)/F_{N-1}|\se|(S)$ is concentrated
in the degree $-N$.
\end{enumerate}

\end{Lemma}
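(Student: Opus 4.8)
The plan is to transcribe the proof of Lemma~\ref{lmfil1} almost verbatim, replacing the $2$-tree $\t$ (or SC $2$-tree) of $\scseq$ by the SC-set $S$ and discarding the data and conditions that refer to the tree structure. Part~(1) is immediate from the Lemma just proved, which gives compatibility of $F$ with the colored SC operad structure on $\se$: since each $F_N$ is cut out level-wise by a bound on $|u|$, it is in particular compatible with the polysimplicial/cosimplicial structure, hence descends to $|\se|$, and the composition maps of $|\se|$ are the totalizations of those of $\se$ and therefore preserve $F$.

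For Part~(2) I would argue exactly as in Lemma~\ref{lmfil1}(2). In the non-SC case ($c_S\in\nat$) an element $u\in\se(S)^{J}_{\{I_s\}_{s\in S}}$ contributes a cell of total degree $(|J|-1)-\sum_{s\in S}(|I_s|-1)$; if this cell is non-degenerate then along the ordered set $\I_S=\bigsqcup_{s\in S}I_s$ the non-decreasing map $Q_u$ must increase strictly between any two consecutive elements lying in one elementary-equivalence class (a repeated value would realize $u$ as the image of a simplicial degeneracy in the corresponding $I_s$-direction), so the forced strict increases inside the $|u|$ classes, being at most the $|J|-1$ strict increases of $Q_u$ along $\I_S$, yield $\sum_{s\in S}|I_s|\le|u|+|J|-1$ and hence degree $\ge|S|-|u|\ge-N$. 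In the SC case ($c_S=\a$) there is no cosimplicial direction, so a non-degenerate cell must have all elementary-equivalence classes singletons, whence $|u|=\sum_{s\in S_\c}|I_s|+|S_\a|$ and the degree $-\sum_{s\in S_\c}(|I_s|-1)$ equals $|S|-|u|\ge-N$.

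For Part~(3) I would copy the associated-graded computation of Lemma~\ref{lmfil1}(3): a vector of $F_N|\se|(S)$ surviving in $F_N|\se|(S)/F_{N-1}|\se|(S)$ is represented by some $u$ whose underlying ordered set has exactly $N+|S|$ maximal elementary-equivalence classes, and recording the order of these classes together with which $I_s$ each of them belongs to gives a surjection $\vs\colon\{1,\dots,N+|S|\}\to S$ satisfying condition~{\bf A} of Lemma~\ref{lmfil1} ($\vs(i)\ne\vs(i+1)$) together with the non-crossing condition coming from condition~(ii) of Section~\ref{seq} (i.e.\ condition~3 of Section~\ref{posle}); in the SC case one also gets that $\vs^{-1}(s)$ is a singleton for every $s\in S_\a$. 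Fixing $\vs$, the corresponding elements span a subcomplex isomorphic to $|\Xi_{N+|S|}|^{\bul+N}$ (resp.\ to $|\Xi_{N+|S_\c|}|^{\bul+N,0}$) for $|\Xi_k|$ as in the Appendix, so by Proposition~\ref{Xi-realize} the quotient is a direct sum of such subcomplexes and its cohomology is $\k$ in each summand, concentrated in degree $-N$. The only nontrivial ingredient here is Proposition~\ref{Xi-realize}, which we invoke as a black box; everything else is the proof of Lemma~\ref{lmfil1} with the tree conditions~{\bf B},~{\bf C} simply dropped, and relaxing them only enlarges the indexing set of admissible $\vs$ without altering any individual $\Xi$-summand. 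The step still deserving care is the precise identification of degenerate cells used in Part~(2) — checking that a repeated value of $Q_u$ within an equivalence class (non-SC), or any non-singleton class (SC), is exactly the image of a simplicial degeneracy operator — since this is what converts the combinatorial counting into the asserted degree bounds.
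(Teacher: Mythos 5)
Your proposal is correct and follows essentially the same route as the paper, which itself only outlines this proof by reference to Lemma~\ref{lmfil1}: part~(1) from the compatibility lemma, part~(2) by the degeneracy/counting argument giving $\sum_{s}|I_s|\le |u|+|J|-1$ (resp.\ singleton classes in the SC case), and part~(3) by decomposing the associated graded over the surjections $\vs\in D(S,N)$ satisfying conditions {\bf I}, {\bf II} (and {\bf III}) into copies of $|\Xi_{N+|S|}|^{\bul+N}$ (resp.\ $|\Xi_{N+|S_\c|}|^{\bul+N,0}$) and invoking Proposition~\ref{Xi-realize}. Your observation that dropping the tree conditions {\bf B}, {\bf C} only enlarges the indexing set of admissible $\vs$ without changing the individual summands is exactly the point.
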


\begin{proof}
Since the proof is very similar to that of Lemma \ref{lmfil1}
we will only briefly outline the proof of (3).

Let  us first treat the non-SC part. Let $S$ be a finite set.
As well as for the  2-operad  $|\seq|$ the cochain
complex $F_N|\se|(S)/F_{N-1}|\se|(S)$ is isomorphic
to a direct sum of identical complexes
\begin{equation}
\label{direct-sum1}
F_N|\se|(S) \,\Big/\, F_{N-1} |\se|(S) \cong
\bigoplus_{\vs \in D(S, N)} |\Xi_{N + |S|}|^{\bul+N}\,,
\end{equation}
where the complexes $|\Xi_k|$ are described in the Appendix
and $D(S, N)$ is the set of surjections
\begin{equation}
\label{surjection-se}
\vs: \{1,2, \dots, N + |S|\} \to S
\end{equation}
satisfying the following conditions
\begin{enumerate}

\item[{\bf I}] $\vs(i)\neq \vs(i+1)$ $\forall ~~ i=1, 2, \dots,  N + |S|-1 $\,,

\item[{\bf II}] if $s\neq \ts \in S$ then it is impossible
to have $i_1, i_2 \in \vs^{-1}(s)$\,,
and $j_1, j_2\in \vs^{-1}(\ts)$ such that
$i_1 <j_1 < i_2 < j_2$\,.

\end{enumerate}
Thus Proposition \ref{Xi-realize} implies
statement (3) in the case $c_S =\c$\,.

Let us now consider the SC-case.  If $S$ is an SC set with $c_{S}= \a$\,,
$S_\c = \chi^{-1}(\c)$ and $S_\a = \chi^{-1}(\a)$
then the complex
 $F_N|\se|(S)/F_{N-1}|\se|(S)$ is isomorphic
to a direct sum of identical complexes
\begin{equation}
\label{direct-sum11}
F_N|\se|(S) \,\Big/\, F_{N-1} |\se|(S) \cong
\bigoplus_{\vs \in D(S, N)} |\Xi_{N + |S_{\c}|}|^{\bul+N, 0}\,,
\end{equation}
where the bicomplexes $|\Xi_k|^{\bul, \bul}$ are
described in the Appendix and
$D(S, N)$ is the set of surjections
(\ref{surjection-se}) satisfying above conditions
{\bf I}, {\bf II} and the additional condition:
\begin{enumerate}

\item[{\bf III}] if $s\in S_{\a}$ then $\vs^{-1}(s)$ consists of
exactly one element.

\end{enumerate}

Since the complex
$|\Xi_{N+ |S_{\c}|}|^{\bul, 0}$ consists of the field $\k$ placed in
degree $0$\,, statement (3) follows in this case too.

\end{proof}

We would like to remark that condition {\bf B} in the
proof of Lemma \ref{lmfil1} implies condition {\bf II}
in the proof of Lemma \ref{lmfil-se}.
Therefore, for every 2-tree $\t: S \to T$
we have the inclusion
\begin{equation}
\label{vazhnoe-incl}
D(\t, N) \subset D(S, N)\,.
\end{equation}

Similarly, if $\t$ is an SC 2-tree then conditions
{\bf A}, {\bf B}, and {\bf C} imply conditions {\bf I},
{\bf II}, and {\bf III}\,. Therefore, we have the inclusion
(\ref{vazhnoe-incl}) for SC 2-trees $\t$ as well.
We will use this inclusion later.

\subsection{Definition of the (SC) operad $\scoR$}
We now define a useful suboperad of $|\se|$

\begin{Definition}
\label{defi-braces}
We define the dg SC operad $\scoR$ as
a suboperad of $|\se|$ with
\begin{equation}
\label{defi-braces-eq}
\scoR(S) = \bigoplus_{N\ge 0} G^N |\se|(S)\,,
\end{equation}
where
$$
G^N |\se|(S) = \big\{ v \in F_N|\se|(S)^{-N} ~~\Big| ~~ d v \in
F_{N-1}|\se|(S) \big\}\,,
$$
and $S$ is an SC set.
\end{Definition}

Lemma \ref{lmfil-se} implies that the inclusion
$$
\scoR \hookrightarrow |\se|
$$
is a quasi-isomorphism.

\begin{Proposition}
\label{scoR-reduced}
The dg SC operad $\scoR$ is reduced.
\end{Proposition}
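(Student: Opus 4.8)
The plan is to run exactly the argument used for Proposition \ref{scR-reduced}, with $2$-trees replaced by SC sets and $\sc$ replaced by $|\se|$. Let $S$ be an SC set with $|S|\le 1$. The first step is to observe that the filtration $F$ on $|\se|(S)$ is then trivial: the set $\I_S$ (resp. $\I_{S_{\c}\sqcup S_{\a}}$ when $c_S=\a$) has at most one element, so there is nothing to cut, every $u$ satisfies $|u|=|S|$, and hence $F_{-1}|\se|(S)=0$ while $F_N|\se|(S)=|\se|(S)$ for all $N\ge 0$. By Definition \ref{defi-braces} this identifies $\scoR(S)$ with the space of degree-$0$ cocycles of $|\se|(S)$.

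Next I would invoke Lemma \ref{lmfil-se}. Part (2), applied with $N=0$ and using the triviality of the filtration, shows that $|\se|(S)$ is concentrated in nonnegative degrees, so its degree-$0$ cocycles coincide with $H^0(|\se|(S))$. Part (3) with $N=0$ (so that $F_{-1}=0$), via the identifications (\ref{direct-sum1}) and (\ref{direct-sum11}) and Proposition \ref{Xi-realize}, computed exactly as in the proof of Lemma \ref{lmfil-se}, gives $H^0(|\se|(S))\cong\k[D(S,0)]$, where $D(S,0)$ is the set of surjections $\{1,\dots,|S|\}\to S$ satisfying conditions {\bf I}, {\bf II} (and {\bf III} when $c_S=\a$). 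For $|S|\le 1$ these conditions are vacuous and there is a unique such surjection (the empty map if $S=\emptyset$, the unique bijection if $|S|=1$), so $\scoR(S)\cong\k$.

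It then remains to verify the remaining clauses in the definition of a reduced symmetric SC operad (Subsection \ref{reduced-SC-symm}): that for a one-element $S$ the isomorphism $\k\cong\scoR(S)$ is the unit map, and that the operadic compositions among zero-ary and unary operations send tensor products of units to units. I would settle these exactly as at the end of the proof of Proposition \ref{scR-reduced}: the unit of $|\se|$ is a degree-$0$ cocycle and hence spans $\scoR(S)$ for every one-element $S$, and on SC sets with $|S|\le 1$ the structure on $\scoR$ is the one carried in degree $0$ by $|\se|$, inherited from the identity operations of the combinatorial operad $\se$, so units are sent to units. I do not expect a genuine obstacle in this proposition: all the real content already lives in Lemma \ref{lmfil-se}, which is available, so what is left is a short bookkeeping deduction. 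If one insists on naming the most delicate point it is the degenerate case $S=\emptyset$, but even there nothing extra is needed, since parts (2) and (3) of Lemma \ref{lmfil-se} at $N=0$ already pin down the relevant cohomology.
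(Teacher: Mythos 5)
Your argument is correct and reaches the paper's conclusion, but it is organized differently. The paper's own proof is a two-line reduction: for an SC set $S$ with $|S|\le 1$ one picks a pruned (SC) 2-tree $\t:S\to T$ with source $S$, observes that $\scR(\t)=\scoR(S)$ as cochain complexes, and quotes Proposition \ref{scR-reduced}. You instead re-run the proof of Proposition \ref{scR-reduced} on the symmetric-operad side, replacing Lemma \ref{lmfil1} and the formulas (\ref{H-gadget-F-N}), (\ref{H-gadgetSC-F-N}) by Lemma \ref{lmfil-se} and (\ref{direct-sum1}), (\ref{direct-sum11}). Both routes rest on the same underlying computation; yours is self-contained given Lemma \ref{lmfil-se}, while the paper's exploits the literal coincidence of the 2-operadic and symmetric-operadic spaces for small $S$ and so avoids repeating the cohomology calculation. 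One small inaccuracy in your write-up: when $|S|=1$ and the single element $s$ is colored by an ordinal $I_s$, the set $\I_S=I_s$ can have many elements, so it is not true that it "has at most one element." The filtration is trivial for the right reason anyway: with only one marked ordinal present, all of $I_s$ forms a single elementary equivalence class, so $|u|=|S|$ for every $u$ — which is exactly the conclusion your argument uses, so nothing downstream is affected.
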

\begin{proof}
Let $S$ be an SC set with $|S|\le 1$\,.
It is not hard to construct a pruned 2-tree or a pruned SC 2-tree
$\t: S \to T$ with $S$ being the source ordinal.

It is easy to see that if $|S|<1$ then
$$
\scR(\t) = \scoR(S)
$$
as cochain complexes.

Thus the desired statement follows immediately
from Proposition \ref{scR-reduced}\,.
\end{proof}

Let us now consider a cofibrant resolution $\R\scR \to \scR$ of
$\scR$ in the closed model category of reduced dg (SC)
2-operads.

It is clear from the definitions of $\scR$ and $\scoR$ that
we have the embedding of dg (SC) 2-operads
$$
\scR \hookrightarrow \des\ \scoR\,.
$$
Since $\sym$ is the left adjoint functor for $\des$
this embedding produces the map
\begin{equation}
\label{glavnoe}
\sym\ \scR \to \scoR.
\end{equation}

Composing (\ref{glavnoe}) with the map
$$
\sym\ \R\scR \to \sym\ \scR
$$
we get the map
\begin{equation}
\label{Glavnoe}
\sym\ \R \scR \to \scoR.
\end{equation}

We claim that
\begin{Theorem}
\label{SymRbr-braces}
The map (\ref{Glavnoe}) is a quasi-isomorphism
of dg (SC) 2-operads.
\end{Theorem}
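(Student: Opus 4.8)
\medskip
\noindent\textbf{Plan of proof.}
The map (\ref{Glavnoe}) is a map of symmetric (SC) operads, so it suffices to show that for every SC set $S$ the induced map of complexes $\sym\ \R\scR(S)\to\scoR(S)$ is a quasi-isomorphism. I would begin by identifying the source with a homotopy colimit: by the symmetrization formula (\ref{sym}) (resp.\ (\ref{sym-SC})) one has $\sym\ \R\scR(S)=\colim_{\J(S)}\R\scR_S$, and since $\R\scR$ is cofibrant the functor $\R\scR_S\in\J(S)\bighat{}$ is cofibrant (Lemma \ref{model}, resp.\ Lemma \ref{scmodel}); hence this colimit computes the homotopy colimit, and the objectwise quasi-isomorphism $\R\scR_S\to\scR_S$ gives a quasi-isomorphism $\sym\ \R\scR(S)\simeq\hocolim_{\J(S)}\scR_S$ compatible with the canonical maps to $\scoR(S)$ coming from the embedding $\scR\hookrightarrow\des\ \scoR$. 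So the theorem is reduced to showing that the canonical map
$$
\hocolim_{\J(S)}\scR_S\ \longrightarrow\ \scoR(S)
$$
is a quasi-isomorphism for every SC set $S$.

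The next step is to exploit the filtration $F$. By Lemmas \ref{lmfil}, \ref{lmfil1}, \ref{lmfil-se} it is compatible with all operadic compositions and with the polysimplicial/cosimplicial structure, so $\scR_S\colon\J(S)^{\rm op}\to\complexes$ takes values in filtered complexes and $\scoR(S)$ is a filtered complex, the displayed map being filtered. The associated graded is completely explicit: since $\scR(\t)^{-N}=G^N\sc(\t)\subset F_N\sc(\t)$ while $F_{N-1}\sc(\t)$ is concentrated in degrees $\ge -(N-1)$ (Lemma \ref{lmfil1}(2)), we get $G^N\sc(\t)\cap F_{N-1}\sc(\t)=0$, so $G^N\sc(\t)\xrightarrow{\ \sim\ }H^{-N}(F_N\sc(\t)/F_{N-1}\sc(\t))$, and the differential of $\scR(\t)$ strictly decreases $F$ (it carries $G^N\sc(\t)$ into $F_{N-1}$); combining this with (\ref{H-gadget-F-N}), (\ref{H-gadgetSC-F-N}) shows that ${\rm gr}^F_N\scR(\t)\cong\k[D(\t,N)]$ sits in degree $-N$ with zero induced differential, and likewise ${\rm gr}^F_N\scoR(S)\cong\k[D(S,N)]$ in degree $-N$ with zero differential, the map being induced by the inclusions $D(\t,N)\subset D(S,N)$ of (\ref{vazhnoe-incl}). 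Because the filtration is exhaustive, bounded below ($F_{-1}=0$), and finite in each cohomological degree (all the categories $\J(S)$ being finite posets), the associated spectral sequences converge, and a comparison of $E_1$-pages reduces the theorem to proving that for every $N$ the canonical map
$$
\hocolim_{\J(S)}\ \k[D(-,N)]\ \longrightarrow\ \k[D(S,N)]
$$
is a quasi-isomorphism of complexes of $\k$-vector spaces.

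Now I would decompose over $D(S,N)$. Since a morphism $\t\to\t_1$ in $\J(S)$ passes to a coarser tree and therefore satisfies $D(\t_1,N)\subset D(\t,N)$ (by conditions {\bf B}, {\bf C}; resp.\ {\bf I}, {\bf II}, {\bf III}), the functor $\t\mapsto\k[D(\t,N)]$ on $\J(S)^{\rm op}$ splits as $\bigoplus_{\vs\in D(S,N)}\k\langle\vs\rangle$, where $\k\langle\vs\rangle(\t)=\k$ if $\vs\in D(\t,N)$ and $0$ otherwise, and $\{\t:\vs\in D(\t,N)\}$ is a full subcategory $\J(S)_\vs\subset\J(S)$ closed under passing to finer trees. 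Consequently
$$
\hocolim_{\J(S)}\ \k[D(-,N)]\ \cong\ \bigoplus_{\vs\in D(S,N)}C_\bul\big(\J(S)_\vs\big),
$$
where $C_\bul(\J(S)_\vs)$ denotes the (normalized) chain complex of the nerve of $\J(S)_\vs$, and the map to $\k[D(S,N)]=\bigoplus_\vs\k$ is the augmentation on each summand. Thus everything comes down to the statement that, for each admissible surjection $\vs$, the category $\J(S)_\vs$ has contractible nerve.

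This last contractibility is the heart of the matter, and its verification is what occupies Section \ref{proof1}. I would prove it combinatorially: the data of $\vs$ together with conditions {\bf A}, {\bf B}, {\bf C} single out a canonical ``coarsest tree compatible with $\vs$'' --- obtained by merging into a single internal vertex every group of elements of $S$ that $\vs$ does not force apart --- which one checks is a well-defined object of $\J(S)_\vs$ and is terminal there, so $\J(S)_\vs$ is contractible. (If a naive global merging is obstructed one argues by induction on $|S|$, peeling off a minimal element of $S$ for the partial order $\prec_\vs$ determined by $\vs$, in the style of the construction of Section \ref{posle} and of the contractibility statement proved in the Appendix.) The SC case differs only by carrying the color $\a$ along and using condition {\bf III}, which forces every $\a$-block of $\vs$ to be a singleton and hence does not affect the merging combinatorics; it follows in parallel. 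Once this is in place, the chain of reductions above yields the theorem.
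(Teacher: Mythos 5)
Your chain of reductions is exactly the one the paper uses: passing to $\hocolim_{\J(S)}\scR_S$ via cofibrancy of $\R\scR_S$, filtering by $F$, identifying the associated graded pieces with $\k[D(\t,N)][N]$ and $\k[D(S,N)][N]$, and splitting over $\vs\in D(S,N)$ so that everything rests on the acyclicity of $\hocolim_{\J(\vs)}\k\to\k$, i.e.\ on the contractibility of the nerve of the sub-poset $\J(\vs)$. Up to that point the argument is sound and matches Section \ref{proof1}.

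The gap is in your final step. The poset $\J(\vs)$ does \emph{not} in general have a terminal object, so there is no ``canonical coarsest tree compatible with $\vs$.'' The obstruction is that mergeability of elements of $S$ under conditions {\bf B} and {\bf C} is not transitive: condition {\bf C} refers to the total order on $S$ carried by the 2-tree, and two elements may each be individually mergeable with a third while being forced apart from one another. Example \ref{primer-al-be-ga-de} of the paper is precisely such a case: with $\vs^{-1}(\al)=\{1\}$, $\vs^{-1}(\de)=\{2,6\}$, $\vs^{-1}(\ga)=\{3,5\}$, $\vs^{-1}(\beta)=\{4\}$, condition {\bf B} forces $\t(\ga)<\t(\de)$ in every $\t\in\J(\vs)$, yet $\al$ can be merged either with $\ga$ (giving $\wt_3$) or with $\de$ (giving $\wt_2$); these are two maximal coarsenings of $\t=\wt_1$ admitting no common coarsening, so $\J(\vs)$ has no maximum (and, by a dual count of the bijective trees, no minimum either). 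Your parenthetical fallback --- induction on $|S|$ by peeling off a $\prec_\vs$-minimal element --- is not developed enough to repair this, and it is exactly this contractibility that constitutes the real content of the theorem. The paper proves it by an entirely different, topological route: it builds a cofibrant resolution $\Phi_\vs$ of the constant point functor on $\J(\vs)$ out of Fox--Neuwirth cells $\FN_\t$, identifies $\colim_{\J(\vs)}\Phi_\vs$ with an explicit subspace $X_\vs=\bigcup_{\t\in\J(\vs)}\FN_\t$ of the configuration space $\conf(S)$ cut out by conditions {\bf C1}--{\bf C3}, and contracts $X_\vs$ by a sequence of explicit deformation retractions (Propositions \ref{weak-equiv} and \ref{cofib-resol}). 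You would need to supply an argument of comparable substance for this step before your proof is complete.
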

This theorem plays a crucial role in proving
our main result (Theorem \ref{th}).
We devote the next section to the proof of
this theorem.

\section{Proof of Theorem \ref{SymRbr-braces}}\label{proof1}
We need to show that for every (SC) set $S$ the map
\begin{equation}
\label{Glavnoe-S}
(\sym \R \scR) (S) \to \scoR (S)
\end{equation}
is a quasi-isomorphism of cochain complexes.

Due to the symmetrization formula
(see equation (\ref{sym-SC}))
$$
\sym\ \R \scR (S) = \colim_{\J(S)}  \R \scR_S\,.
$$
As $\R \scR$ is a cofibrant resolution
of $\scR$, Lemma \ref{scmodel} implies that the functor $\R \scR_S$ is cofibrant. Hence, the natural map
$$
\hocolim_{\J(S)}  \R \scR_S\to \colim_{\J(S)}  \R \scR_S 
$$
is a weak equivalence. Hence we have a zig-zag weak equivalence
$$
\sym\ \R \scR (S)  \stackrel\sim\to \hocolim_{\J(S)} \scR_S\,.
$$

Thus we need to show that the map
$$
\hocolim_{\J(S)} \scR_S \to \scoR (S)
$$
is a quasi-isomorphism of cochain complexes.

For this, it suffices to show that so is the map
\begin{equation}
\label{nado-qism}
\hocolim_{\J(S)} (F_N\scR /F_{N-1} \scR)_S  \to
 F_N \scoR (S)/ F_{N-1} \scoR (S)
\end{equation}
for every $N$\,.

Equations (\ref{H-gadget-F-N}), (\ref{H-gadgetSC-F-N}) and
statement (2) of Lemma \ref{lmfil1} imply that
for every 2-tree or SC 2-tree $\t$,
\begin{equation}
\label{Gr-scR}
F_N \scR(\t) \big/ F_{N-1} \scR(\t)
=\k[D(\t, N)][N]\,,
\end{equation}
where $\k[D(\t, N)][N]$ is considered as
a cochain complex with the zero differential.

Similarly, equations (\ref{direct-sum1}), (\ref{direct-sum11})
and statement (2) of Lemma \ref{lmfil-se} imply that
for every SC set $S$
\begin{equation}
\label{Gr-scoR}
F_N \scoR(S) \big/ F_{N-1} \scoR(S) = \k[D(S, N)][N]\,,
\end{equation}
where $\k[D(S, N)][N]$ is considered as
a cochain complex with the zero differential.

Let us recall that for every (SC) set $S$
and for every $\t \in \J(S)$ we have the
inclusion
$$
D(\t, N) \subset D(S, N)\,.
$$

Let $S$ be a finite  (SC) set.  Then for
$\vs \in D(S, N)$ we set $\J(\vs)\subset \J(S)$
to be the full subcategory of all $2$-trees 
$\t$ such that
$$
\vs \in D(\t, N)\,.
$$

Recall that for every (SC) set $S$ the category $\J(S)$
is a poset. It is not hard to see that for every morphism
$$
P: \t \to \wt
$$
in the category $\J(S)$ we have the inclusion
\begin{equation}
\label{inclusion-t-wt}
D(\wt, N) \subset D(\t, N)\,.
\end{equation}
Furthermore, the morphism
$$
F_N \scR(\wt) \big/ F_{N-1} \scR(\wt)
\to
F_N \scR(\t) \big/ F_{N-1} \scR(\t)
$$
corresponding to $P: \t \to \wt$ is given
by this inclusion.

Combining this observation with
equations (\ref{Gr-scR}) and (\ref{Gr-scoR})
we conclude that
\begin{equation}
\label{concern}
\hocolim_{\J(S)} (F_N\scR /F_{N-1} \scR)_S
= \bigoplus\limits_{\vs \in
D(S, N)} \hocolim_{\J(\vs)} \k\,,
\end{equation}
$$
(F_N/F_{N-1})\scoR(S)(N)=\bigoplus\limits_{\vs \in
D(S, N)} \k\,,
$$
and (\ref{nado-qism}) is induced by the
natural maps
\begin{equation}
\label{stya}
\hocolim_{\J(\vs)} \k\to \k\,,
\end{equation}
where, by abuse of notation, $\k$ denotes
both the underlying field and the functor
which assigns $\k$ to every object of $\J(\vs)$\,.

Thus it suffices to show that the map
(\ref{stya}) is a quasi-isomorphism for every
$\vs \in D(S, N)$\,.

The obvious topological counterpart of this
statement can be formulated as
\begin{Proposition}
\label{weak-equiv}
For every (SC) set $S$ and every element
$\vs \in D(S, N)$ the natural map
\begin{equation}
\label{styat}
\hocolim_{\J(\vs)} \pt \to \pt
\end{equation}
is a weak equivalence.
\end{Proposition}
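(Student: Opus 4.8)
The plan is to reduce the statement to the contractibility of a classifying space and then to build an explicit contraction. Since the homotopy colimit of the constant functor $\pt$ over a small category is the geometric realization of its nerve, the map (\ref{styat}) is a weak equivalence if and only if the nerve $N\J(\vs)$ of the poset $\J(\vs)$ is weakly contractible; this is what I would establish.

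First I would make $\J(\vs)$ combinatorially explicit. An object of $\J(\vs)$ is a pruned (SC) $2$-tree $\t:S\to T$; condition {\bf B} forces $\t$ to be monotone for the relation $\prec$ on $S$ given by $s\prec\ts$ iff some element of $\vs^{-1}(s)$ lies strictly between two elements of $\vs^{-1}(\ts)$, and condition {\bf II} (non-crossing) guarantees that $\prec$ is a strict partial order and endows the family of fibers $\{\vs^{-1}(s)\}_{s\in S}$ with a forest structure; condition {\bf C} forces the total order that $\t$ puts on $S$ to agree, inside each fiber of $\t$, with the order read off $\vs$. Thus an object of $\J(\vs)$ is the same as an admissible ordered set-partition of $S$ and a morphism is a coarsening. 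The point to keep in mind is that $\J(\vs)$ is in general neither upward nor downward directed — it can have several maximal and several minimal objects already for $|S|=4$ — so one cannot finish by exhibiting a terminal or an initial object.

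Instead I would produce a conical contraction. Using the forest structure of $\vs$ one singles out a distinguished fine object $\t_\vs^{*}\in\J(\vs)$ (a common refinement of all maximal objects of $\J(\vs)$) together with the self-map $f:\J(\vs)\to\J(\vs)$, $f(\t):=\t\vee\t_\vs^{*}$, the least common coarsening. Granting that $f$ is well defined and order-preserving, one has $\t\le f(\t)$ and $\t_\vs^{*}\le f(\t)$ for all $\t$, hence natural transformations $\mathrm{const}_{\t_\vs^{*}}\Rightarrow f\Leftarrow\mathrm{id}$; passing to nerves these give homotopies $\mathrm{const}\simeq Bf\simeq\mathrm{id}$, so $N\J(\vs)$ is contractible and (\ref{styat}), hence also (\ref{stya}), is a weak equivalence.

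The main obstacle is exactly the input just granted: one must prove that $\prec$ together with all the within-fiber orders imposed (via condition {\bf C}) by the maximal objects admits a common linear extension, so that $\t_\vs^{*}$ exists and lies in $\J(\vs)$, and that $\t\vee\t_\vs^{*}$ always exists in $\J(\vs)$ and depends monotonically on $\t$. Both facts rest on the laminar structure encoded by condition {\bf II}, and checking them is the genuine combinatorial content; a convenient way to organize it is an induction on $|S|$ (or on $N$) in which one peels off a leaf of the forest of $\vs$-fibers and compares $\J(\vs)$ with the corresponding poset of the reduced datum via Quillen's Theorem~A. The Swiss Cheese case $c_S=\a$ needs no new idea: one only has to remember that $S_\a=\chi^{-1}(\a)$ is forced into the minimal fiber of every admissible $2$-tree and that $\vs^{-1}(s)$ is a single element for $s\in S_\a$ (condition {\bf III}), which affects none of the steps above.
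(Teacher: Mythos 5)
Your first reduction is fine: $\hocolim_{\J(\vs)}\pt$ is the classifying space of the poset $\J(\vs)$, so it suffices to prove that the nerve $N\J(\vs)$ is contractible, and you are right that $\J(\vs)$ generally has several maximal and several minimal elements. The gap is that the input you grant yourself is false: the maximal elements of $\J(\vs)$ need not have a common lower bound, so the distinguished object $\t_{\vs}^{*}$ does not exist and the conical contraction $f(\t)=\t\vee\t_{\vs}^{*}$ cannot be set up. (Your construction forces $\t_{\vs}^{*}\le \t$ for every maximal $\t$: indeed $\t\le f(\t)$ and maximality give $f(\t)=\t$, hence $\t_{\vs}^{*}\le f(\t)=\t$.) Concretely, take the non-SC set $S=\{a,b,c,d\}$ and $\vs:\{1,\dots,6\}\to S$ with values $(a,b,a,c,d,c)$, so that $\vs\in D(S,2)$, the fibers are $\{1,3\},\{2\},\{4,6\},\{5\}$, and the only nestings are $b$ inside $a$ and $d$ inside $c$. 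The ordered partitions $(\{b\},\{a,d\},\{c\})$ and $(\{d\},\{b,c\},\{a\})$ both define objects of $\J(\vs)$ (conditions \textbf{A}, \textbf{B}, \textbf{C} are readily checked), and both are maximal, since merging any two consecutive blocks puts a nested pair into a single fiber, violating \textbf{B}. A common lower bound would have to refine both partitions, hence have singleton fibers, and since $P_T$ must be order-preserving its total order would have to begin with $b$ (to map to the first) and with $d$ (to map to the second) --- impossible. So $N\J(\vs)$ is contractible here (the two objects are connected through the third maximal element $(\{b,d\},\{a,c\})$), but not for the reason you propose; no cone-point argument of this form can exist.

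The paper sidesteps the combinatorics of the poset entirely: instead of contracting the nerve, it replaces the constant functor by the explicit cofibrant resolution $\Phi_{\vs}$ built out of Fox--Neuwirth cells (Proposition \ref{cofib-resol}), identifies the homotopy colimit with the concrete subspace $X_{\vs}=\bigcup_{\t\in\J(\vs)}\FN_{\t}$ of the configuration space $\conf(S)$ cut out by conditions \textbf{C1}--\textbf{C3}, and contracts $X_{\vs}$ by explicit coordinate-wise deformation retractions. If you want to stay at the level of the nerve you would need a genuinely different device (for instance the inductive Quillen Theorem~A argument you gesture at in your last sentence, carried out in full), but as written the central construction of your proof does not exist.
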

In what follows, by abuse of notation, we denote a constant
functor from $\J(\vs)$ to another category by the underlying
object. For example, in (\ref{styat}) $\pt$ denotes both
the one-point space and the functor from $\J(\vs)$ to
the category of topological spaces which assigns $\pt$
to every object of $\J(\vs)$\,.

Let us postpone the proof of Proposition \ref{weak-equiv}
to the end of the section and show that this proposition
indeed implies that (\ref{stya}) is a quasi-isomorphism.

We, first, use the adjunction
\begin{equation}
\label{adjunction}
| ~ |_{\top}\, : \, \sSets \longleftrightarrow \Top
\, : \, C^{sing}_{*}
\end{equation}
between the category $\Top$ of topological spaces
and the category $\sSets$ of simplicial sets.
Here $|~|_{\top}$ denotes the realization functor and
$C^{sing}_{*}$ is the singular chain functor.

Using the fact that the adjunction (\ref{adjunction})
gives a Quillen equivalence between $\Top$ and
$\sSets$ it is not hard to deduce from Proposition
\ref{weak-equiv} its counterpart for simplicial sets.
Namely, Proposition \ref{weak-equiv} implies that
for every $\vs\in D(S, N)$ the natural map
\begin{equation}
\label{stya-simpl}
\hocolim_{\J(\vs)} \De^0 \to \De^0
\end{equation}
is a weak equivalence of simplicial sets, where
$$
\De^0  = \hom_{\D} ( ~ ,[0])
$$
is the terminal object of the category $\sSets$\,.

Therefore, for the simplicial Abelian group $\bbZ\De^0$\,,
the natural map
\begin{equation}
\label{stya-sAb}
\hocolim_{\J(\vs)} \bbZ\De^0 \to \bbZ\De^0
\end{equation}
is a weak equivalence.

Notice that, via the Dold-Kan correspondence,
(\ref{stya-sAb}) can be viewed as a map of
cochain\footnote{Here we reverse the standard grading of the Dold-Kan
correspondence.}
complexes
of Abelian groups. Furthermore, to say that (\ref{stya-sAb})
is a weak equivalence of simplicial Abelian groups is to say
that (\ref{stya-sAb}) is a quasi-isomorphism of the corresponding
cochain complexes.

Recall that the forgetful functor
$$
\Psi: \k - {\bf Vect} \to \Ab
$$
from the category $\k - {\bf Vect}$ of
$\k$-vector spaces to the category $\Ab$ of Abelian groups
admits the left adjoint functor
$$
\k\, \otimes_{\bbZ} \,\, : \Ab \to
\k - {\bf Vect} \,.
$$
Using this adjunction and the quasi-isomorphism
(\ref{stya-sAb}) we deduce that the natural map
\begin{equation}
\label{stya-k-mod}
\hocolim_{\J(\vs)} \k \De^0 \to \k \De^0
\end{equation}
is a quasi-isomorphism of cochain complexes of
$\k$-vector spaces. Here $\k \De^0$ is the cochain complex
$$
\dots
 \stackrel{{\rm id}}{\to}
\k
 \stackrel{0}{\to}
\k
\stackrel{{\rm id}}{\to} \k \stackrel{0}{\to} \k
$$
with the right most term placed in degree $0$\,.
This complex is obviously quasi-isomorphic to $\k$ placed
in degree $0$\,. And hence the map (\ref{stya}) is
indeed a quasi-isomorphism of cochain complexes.

In order to complete the proof of
Theorem \ref{SymRbr-braces} it remains to
prove Proposition \ref{weak-equiv}.

\subsection{Proof of Proposition \ref{weak-equiv}}
We need a cofibrant resolution of
the trivial functor from the poset $\J(\vs)$ to the category
of topological spaces.
The closed model structure on the category
of functors from $\J(\vs)$ to $\Top$ is obtained from that on
topological spaces using the transfer
principle\footnote{The transfer principle can
be applied in this case because $\J(\vs)$ is
a finite poset and the Quillen's path-object argument
obviously works for topological spaces.}
of C. Berger and I. Moerdijk \cite{BM}.
In other words, fibrations (resp. weak equivalences)
between functors from $\J(\vs)$ are object-wise fibrations
(resp. object-wise weak equivalences).

In order to construct the resolution, given a finite set $S$,
 we consider the configuration space
$\conf(S)$ of distinct points on $\bbR^2$ labeled by
elements of  $S$\,.

It is known that
the space $\conf(S)$ admits a cellular subdivision into
the Fox-Neuwirth cells
\cite{Berger}, \cite{GJ}, \cite{Sasha1}. Each Fox-Neuwirth
cell $\FN_{\t}$ corresponds to a pruned 2-tree $\t:S \to T$ and it can
be defined as the space of all injective maps from the 2-tree $\t$
to the generalized 2-tree:
$$
(x,y) \to x :
\bbR^2 \to \bbR\,,
$$
where on $\bbR^2$ we use the lexicographic order.

In other words, a configuration $\{(x_s, y_s)\}_{s\in S}$
belongs to $\FN_{\t}$ iff the following conditions
are satisfied:

--- if $\t(s)= \t(\ts)$ and $s< \ts$
then $x_{s}= x_{\ts}$ and $y_s < y_{\ts}$\,,

--- if $\t(s)< \t(\ts)$ then $x_{s}< x_{\ts}$\,.

An example of a configuration from $\FN_{\t_1}$ for the
$2$-tree
$$
\t_1: \{1,2,3,4,5\} \to \{1,2,3\}
$$
$$
\t_1 (1)= \t_1(2) =1\,, \qquad \t_1 (3)= \t_1(4) =2\,, \qquad
\t_1(5) = 3
$$
is depicted in figure \ref{FN1}
\begin{figure}[htb]
\psfrag{1}[]{$1$}
\psfrag{2}[]{$2$}
\psfrag{3}[]{$3$}
\psfrag{4}[]{$4$}
\psfrag{5}[]{$5$}
\includegraphics[width=5cm,height=5cm]{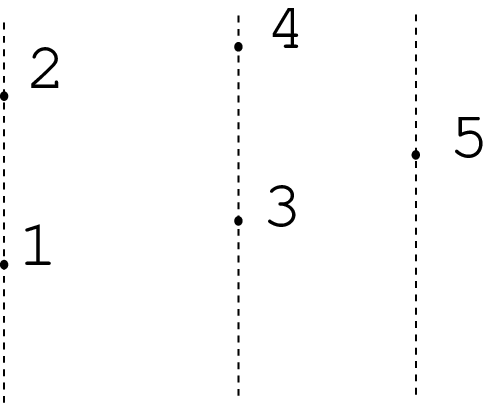}
\caption{A typical point of $\FN_{\t_1}$} \label{FN1}
\end{figure}

This construction can be easily generalized
to pruned SC 2-trees. Namely, if $\t: S \to T$ is a pruned SC 2-tree
with $S_{\a}$ being the preimage of the minimal element
of $T$ and
$S_{\c}= S \setminus S_{\a}$ then $\FN_{\t}$ consists
of configurations $\{(x_s, y_s)\}_{s\in S}$
satisfying the following conditions:

--- if $s\in S_{\a}$ then $x_{s}=0$\,; if $s\in S_{\c}$ then $x_s > 0$\,,

--- if $\t(s)= \t(\ts)$ and $s< \ts$
then $x_{s}= x_{\ts}$ and $y_s < y_{\ts}$\,,

--- if $\t(s)< \t(\ts)$ then $x_{s}< x_{\ts}$\,.

Recall that for pruned SC 2-trees the range $\t(S)$
does not in general include the minimal element.
In other words, the subset $S_{\a}$ may be empty.
In this case we still require that $x_s > 0$ for
$s\in S_{\c}$\,.

If $S$ is an (SC) set then
for every map $P: \t \to \wt$ of pruned (SC) 2-trees in
the category $\J(S)$ we have the obvious inclusion
\begin{equation}
\label{FN-t-FN-wt}
\FN_{\wt} \hookrightarrow \pa \FN_{\t}\,,
\end{equation}
where $\pa \FN_{\t}$ denotes the boundary of
the Fox-Neuwirth cell $\FN_{\t}$\,.

For example, we may consider the 2-tree
$$
\t_2: \{1,2,3,4,5\} \to \{1,2\}
$$
$$
\t_2 (1)= \t_2(2) =1\,, \qquad \t_2 (3)= \t_2(4) = \t_2(5) = 2
$$
with a (unique) map in $\J(\{1,2,3,4,5\})$
$$
P: \t_1 \to \t_2\,,
$$
$$
P_S = id\,, \qquad P_T(1) = 1\,, \qquad P_T(2)=P_T(3) = 2\,.
$$
A configurations from $FN_{\t_2}$ consists of a pair of
distinct vertical lines; the left line carries points
$1$ and $2$ such that $1$ is below $2$; the right line
carries points $3$, $4$, $5$ which are put in the order
from the bottom to the top.  (See figure \ref{FN2}.)
\begin{figure}[htb]
\psfrag{1}[]{$1$}
\psfrag{2}[]{$2$}
\psfrag{3}[]{$3$}
\psfrag{4}[]{$4$}
\psfrag{5}[]{$5$}
\includegraphics[width=5cm,height=5cm]{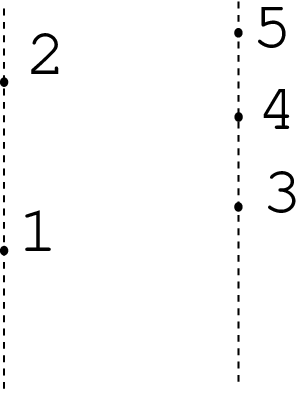}
\caption{A typical point of $\FN_{\t_2}$} \label{FN2}
\end{figure}
It is clear that $\FN_{\t_2}$ belongs to the boundary of
$\FN_{\t_1}$\,.

Let $\vs\in D(S, N)$ and $\J(\vs)$ be the sub-poset
of $\J(S)$ defined above. Using the inclusion
(\ref{FN-t-FN-wt}) we upgrade the correspondence
\begin{equation}
\label{Phi-vs}
\t \to \Phi_{\vs}(\t) =
\bigcup_{\wt\in \J(\vs); ~~ \t\to \wt}
\FN_{\wt}
\end{equation}
to the functor
$$
\Phi_{\vs}  : \J(\vs) \to \Top\,.
$$
The union in (\ref{Phi-vs}) is taken over all
the pruned (SC) $2$-trees
$\wt\in \J(\vs)$ for which we have a map
from $\t$ to $\wt$\,.

\begin{Example}
\label{primer-al-be-ga-de}
{\rm
We consider $S= \{\al,\beta,\ga, \de\}$ with $c_{S}= \a$\,,
$\chi(\al)= \chi(\ga)= \chi(\de) =\c$\,, $\chi(\beta)= \a$\,, and
$\vs$ being the following map
$$
\vs : \{1,2,3,4, 5,6\} \to S
$$
$$
\vs(1)=\al\,, \qquad \vs(2) =\de \,, \qquad \vs(3) = \ga\,,
\qquad \vs(4) = \beta\,, \qquad \vs(5)= \ga, \qquad
\vs(6)= \de\,.
$$
The map $\vs$ is an element of $D(S, 2)$ and
the SC $2$-tree $\t : \{\beta<\ga<\al<\de\} \to \{1,2,3,4\}$
$$
\t(\beta)=1, \qquad \t(\ga)=2, \qquad \t(\al)=3, \qquad \t(\de) = 4
$$
is an object of $\J(\vs)$\,.

There are exactly three pruned SC 2-trees
$\wt\in \J(\vs)$ for which there is a map
$\t \to \wt$\,. The first one is $\wt_1 = \t$ and the second
one is $\wt_2 :  \{\beta<\ga< \al<\de\} \to \{1,2,3\}$
$$
\wt_2(\beta)=1, \qquad \wt_2(\ga)=2, \qquad \wt_2(\al)=\wt_2(\de)=3\,.
$$
The third SC 2-tree $\wt_3 : \{\beta<\al< \ga<\de\} \to \{1,2,3\}$
$$
\wt_3(\beta)=1, \qquad \wt_3(\al)=\wt_3(\ga)=2, \qquad \wt_3(\de)=3\,.
$$

So the space $\Phi_{\vs}(\t)$ consists of
configurations $\{(x_s, y_s)\}_{s\in \{\al,\beta,\ga, \de\}}$
satisfying the following conditions:

--- $x_{\beta}=0 < x_{\ga} \le x_{\al} \le x_{\de}$\,, and $x_{\ga}<x_{\de}$\,,

--- if $x_\al = x_\ga$ then $y_{\al} < y_{\ga}$\,,

--- if $x_\al = x_\de$ then $y_{\al} < y_{\de}$\,.
}
\end{Example}

\begin{Proposition}
\label{cofib-resol}
Let $S$ be an (SC) set and $\vs\in D(S, N)$\,.
Then the functor $\Phi_{\vs}$ (\ref{Phi-vs}) is
a cofibrant resolution of the trivial functor
from $\J(\vs)$ to the category of topological
spaces.
\end{Proposition}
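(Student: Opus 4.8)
The plan is to verify the two defining properties of a cofibrant resolution of the constant (``trivial'') functor $\underline{\pt}$: that the structure map $\Phi_\vs\to\underline{\pt}$ is an objectwise weak equivalence, i.e. that each $\Phi_\vs(\t)$ is contractible; and that $\Phi_\vs$ is cofibrant in the projective model structure on $\Top$-valued functors on $\J(\vs)$, namely the one transferred from $\Top$ via the Berger--Moerdijk principle \cite{BM}, in which fibrations and weak equivalences are detected objectwise. Granting both, $\Phi_\vs$ is the asserted cofibrant resolution.

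For contractibility I would argue directly by a straight-line homotopy. Fix a configuration $p_0\in\FN_\t$ (this cell is non-empty for every pruned $2$-tree or SC $2$-tree $\t$). Any $p\in\Phi_\vs(\t)$ lies in some $\FN_\wt$ with $\t\to\wt$ in $\J(\vs)$, so $\wt$ is a coarsening of $\t$; I claim that $p_s:=(1-s)p+sp_0$, taken coordinatewise in $(\bbR^2)^S$, stays in $\Phi_\vs(\t)$, in fact in $\FN_\t$ for all $s\in(0,1]$. This is a short check on the $x$-coordinates: two points in different columns of $\t$ have $x$-coordinates that are already weakly ordered with the correct sign in $p$ (because $\wt$ coarsens $\t$) and strictly so in $p_0$, hence never collide along the path and keep the $\t$-order there; two points in the same column of $\t$ lie in the same column of $\wt$, so their $x$-coordinates coincide in both $p$ and $p_0$ and stay equal, while their $y$-coordinates keep the order imposed by the ordinal structure of $S$; finally the SC constraints ($x_s=0$ for $s$ over the marked minimum, $x_s>0$ otherwise) are convex and so are preserved. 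Thus $H(p,s):=p_s$ contracts $\Phi_\vs(\t)$ onto $p_0$.

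Cofibrancy is where the real work lies, and I would reduce it to the latching-object criterion: it suffices that for each $\t\in\J(\vs)$ the latching map $L_\t\Phi_\vs=\colim_{\t\to\wt,\ \wt\neq\t}\Phi_\vs(\wt)\to\Phi_\vs(\t)$ be a closed Hurewicz cofibration. One identifies $L_\t\Phi_\vs$ with $\Phi_\vs(\t)\setminus\FN_\t$, the union of the strata $\FN_\wt$ over the proper coarsenings $\wt\in\J(\vs)$ of $\t$, so that this map merely attaches the single open Fox--Neuwirth cell $\FN_\t$. The point to prove is then that $\big(\Phi_\vs(\t),\Phi_\vs(\t)\setminus\FN_\t\big)$ is an NDR pair, which I would establish by a local analysis of the Fox--Neuwirth stratification: near a point of a lower stratum $\FN_\wt$ the ambient space $\conf(S)$ looks like $\bbR^{2|S|}$, the subspace $\Phi_\vs(\t)$ is cut out by the finitely many weak inequalities and coordinate equalities coming from the coarsenings of $\t$ present in $\J(\vs)$, and is therefore locally a product of a Euclidean factor with a ``corner'' region (an intersection of half-spaces and coordinate subspaces); for such a model the inclusion of the non-top part of the boundary carries an explicit neighbourhood deformation retraction, and these local retractions patch together by a partition of unity. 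The boundary-incidence relation (\ref{FN-t-FN-wt}) and its SC counterpart are exactly what make this local picture uniform over $\Phi_\vs(\t)$, and the extra hyperplanes $\{x_s=0\}$ of the SC case do not disturb it. A finite induction over $\J(\vs)$, filtering by the number of columns, then exhibits $\Phi_\vs$ as a cellular, hence cofibrant, object. I expect this uniform local triviality of the restricted Fox--Neuwirth stratification to be the only genuine obstacle; the non-SC (cochain) version of essentially this verification is carried out in \cite{dgcat}, and the SC case runs in parallel.
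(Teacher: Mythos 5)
Your argument is correct, but the contractibility half takes a genuinely different route from the paper's. The paper contracts $\Phi_{\vs}(\t)$ by a chain of explicit deformation retractions, first pinning the $y$-coordinates one at a time (the spaces $F_0\supset\dots\supset F_{|S|}$) and then the $x$-coordinates (the spaces $G_0\supset\dots\supset G_{|T|}\cong\pt$). You instead observe that $\Phi_{\vs}(\t)$ is star-shaped about any point $p_0$ of the top cell $\FN_{\t}$: for a coarsening $\wt$ of $\t$ the strict inequalities $x_s<x_{\ts}$ defining $\FN_{\t}$ hold weakly on $\FN_{\wt}$ (since $P_T$ is monotone) and strictly at $p_0$, while the remaining conditions (equality of $x$-coordinates within a $\t$-column, the $y$-order there, which agrees between $\t$ and $\wt$ because $P_S$ preserves the order on $\t$-fibres, and the SC constraints $x_s=0$ or $x_s>0$) are all preserved under convex combination. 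This check is valid and noticeably shorter; note, though, that the paper's heavier retraction machinery is what gets reused for the contractibility of $X_{\vs}=\colim_{\J(\vs)}\Phi_{\vs}$ later in the section, where there is no top cell and no straight-line contraction is available. For cofibrancy the two arguments are essentially the same induction seen from two angles: the paper filters $\Phi_{\vs}$ by the dimension of the Fox--Neuwirth cells and appeals to the argument of Batanin's Theorem 7.2, while you use the latching-map criterion. To make your version airtight you should record that $\J(\vs)$ is a finite direct category (graded by $|T|$, which strictly decreases along non-identity morphisms), so that projective cofibrancy is detected on latching maps, and that the latching object, being a finite union of closed substrata, carries the subspace topology; moreover, since the transferred model structure sits over the Quillen structure on $\Top$, the latching maps should ultimately be exhibited as relative cell inclusions (which the polyhedral description of $\Phi_{\vs}(\t)$ by linear equalities and inequalities does provide), rather than merely as closed NDR pairs. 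With that said, your stratification bookkeeping is left at essentially the same level of detail as the paper's own.
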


\begin{proof}
Let $S$ be an (SC) set and
$\vs \in D(S, N)$\,. Let us show that
$\Phi_{\vs}(\t)$ is contractible for every pruned
(SC) $2$-tree $\t: S\to T$ for which
$\vs \in D(\t, N)$\,.

We give a detailed proof of contractibility of
$\Phi_{\vs}(\t)$ in the case when $c_{S}= \c$ (i.e. $S$ is a usual, non-SC, set) and
hence $\t$ is a pruned (non-SC) 2-tree.
The SC case $c_S=\a$ is very similar.

The 2-tree $\t: S\to T$ gives us a total order
on the set $S$. So we identify $S$ with the ordinal
$\{1,2, 3, \dots, |S|\}$ and denote by
$(x_i, y_i)$ the coordinates of the point labeled by
$i \in \{ 1,2,3, \dots, |S| \}$\,.

Next, we consider the following sequence of subspaces
$$
\Phi_{\vs}(\t)= F_0 \supset F_1 \supset F_2 \supset \dots \supset F_{|S|}
$$
where $F_k$ consists of configurations $\{(x_i, y_i)\} \in \Phi_{\vs}(\t)$
with
$$
y_i = i\,, \qquad \forall ~~i \le k\,.
$$
Let us show that $F_{k+1}$ is a deformation
retract of $F_k$ for all $k =0,1,2, \dots, |S|-1$\,.

A deformation retraction $f: F_k \times [0,1] \to F_k $
of $F_k$ onto $F_{k+1}$ is given by the formula:
\begin{equation}
\label{f-t}
f(\{(x_i, y_i)\}, t) =
\{(x_i, y_i(t) )\}\,,
\end{equation}
where
$$
y_i(t) =
\begin{cases}
i \,, \qquad  {\rm if} \qquad i \le k\,, \cr
(1-t) y_i + t(k+1 + y_i-y_{k+1}) \,, ~ {\rm if} ~ i > k \,.
\end{cases}
$$
We need to show that for all $t \in [0,1]$ and
for all configurations $\{(x_i, y_i)\}\in F_k$
the point
$f(\{(x_i, y_i)\}, t) $ belongs to $\Phi_{\vs}(\t)$\,. More precisely,
we need to check that if $x_i= x_j$
and $i<j$ then $y_i(t) < y_j(t)$ for all $t \in [0,1]$\,.

First, it is obvious that if $i<j\le k $ the
$y_i(t)< y_j(t)$ regardless of whether $x_i$ equals $x_j$
or not. Second, it is not hard to see that if
$k < i< j$ and $y_i < y_j$ then
$y_i(t) < y_j(t)$ for all $t \in [0,1]$\,.
Finally, if $i\le k < j$ and $x_i = x_j$ then
the configuration $\{(x_i, y_j)\}$ belongs to the Fox-Neuwirth cell
$\FN_{\wt}$ corresponding to  a pruned 2-tree $\wt$ for which
$$
\wt(i)= \wt(j)\,.
$$
The latter implies that $\wt(i)= \wt(i+1)= \dots = \wt(j-1) = \wt(j)$
and hence
$$
x_i= x_{i+1} = \dots = x_{j-1} = x_{j}\,.
$$
Therefore $y_i< y_{i+1}< \dots < y_{j-1} < y_j$ and,
in particular\footnote{In this case $y_j= y_{k+1}$ only if $j=k+1$},
$$
y_j \ge y_{k+1} > y_k = k\,.
$$
Using these inequalities we conclude that
for all $t\in [0,1]$
$$
y_j(t) = (1-t)y_j + t(k+1) + t(y_j- y_{k+1}) >
k + t (y_j - y_{k+1}) \ge k\,.
$$
On the other hand $y_i(t) \le k$\,.
Thus, if $x_i = x_j$ and $i < j$ then
$$
y_j(t) > y_i(t)
$$
for all $t\in [0,1]$\,.

Furthermore, if $y_i = i$ for all $i\le k+1$
then $y_i(t)\equiv i$ for all $i\le k+1$\,.
Thus $f$ is indeed a deformation retraction of $F_k$
onto $F_{k+1}$\,.

Let us now identify $T$ with the standard ordinal
$\{1,2,3, \dots, |T|\}$\,. Next we note
that if $\wt\in \J(\vs)$ admits a map $\t \to \wt$
then equality $\t(i)= \t(j)$ implies the equality
$\wt(i) = \wt(j)$\,. Hence, if $\t(i) = \t(j)$ then
$x_i= x_j$ for every configuration
$\{(x_i, y_i)\}\in \Phi_{\vs}(\t)$\,.

Therefore the function $i \to x_i$ factors through
$$
\t: \{1,2,3, \dots, |S|\} \to \{1,2,3, \dots, |T|\}
$$
and hence, we may describe configurations
from $\Phi_{\vs}(\t)$ using the collections of coordinates
$\{z_l, y_i\}$\,, $z_l, y_i\in \bbR$  where
$l\in  \{1,2,3, \dots, |T|\}$ and $i\in  \{1,2,3, \dots, |S|\}$\,.

For every configuration $\{z_l, y_i\}$ from $\Phi_{\vs}(\t)$
we have
\begin{equation}
\label{inequal}
z_1 \le z_2 \le z_3 \le \dots \le z_{|T|}
\end{equation}
and if $z_l= z_m$ for $l\neq m$ then the
corresponding configuration belongs to the Fox-Neuwirth
cell $\FN_{\wt}$ of a 2-tree $\wt \neq \t$\,.

To show the contractibility of $F_{|S|}$ we
consider the following
sequence of subspaces:
$$
F_{|S|} = G_0 \supset G_1 \supset G_2 \supset \dots \supset G_{|T|} \cong \pt.
$$
where $G_k$ consists of configurations $\{z_l, y_i\}\in F_{|S|}$
satisfying the following condition
$$
z_l = l \,, \qquad \forall \quad l \le k\,.
$$
In terms of the original coordinates $(x_i, y_i)$
the latter condition reads
$$
x_i = \t(i)\,, \qquad {\rm if} \quad  \t(i) \le k\,.
$$

We show that for all $k \le |T|-1$ the space $G_{k+1}$ is a deformation
retract of $G_k$\,.

The desired deformation retraction is defined by the
formula
\begin{equation}
\label{g-t}
g(\{z_l, y_i\}, t) =
\{z_l(t), y_i\}\,,
\end{equation}
where
$$
z_l(t) =
\begin{cases}
l \,, \qquad  {\rm if} \qquad l \le k\,, \cr
(1-t) z_l + t(k+1 + z_l- z_{k+1}) \,, ~ {\rm if} ~ l > k \,.
\end{cases}
$$

To prove that the configuration $\{z_l(t), y_i\}$ belongs
to $F_{|S|}$ for all $t\in [0,1]$ we need to check that
inequalities
\begin{equation}
\label{inequal1}
z_1(t) \le z_2(t) \le z_3(t) \le \dots \le z_{|T|}(t)
\end{equation}
hold for all $t\in [0,1]$\,. Furthermore we need to
check that if $z_l< z_m$ then $z_l(t) < z_m(t)$ for all
$t\in [0,1]$\,.

In the case $l < m \le k$ we simply have the
inequality $z_l(t) < z_m(t)$\,.
Also it is not hard to see that in the case
$k < l < m$ the inequality $z_{l}(t)\le z_m(t)$
(resp. $z_l(t)< z_m(t)$) follows
from $z_l \le z_m$ (resp. $z_l < z_m$)\,.

Thus it remains to consider the case $l=k$ and
$m=k+1$\,.

In this case we have $z_k(t) \equiv z_k = k$\,.
Furthermore, due to (\ref{inequal}) we have
$z_{k+1} \ge k$ and hence
$$
z_{k+1}(t) = (1-t)z_{k+1} + t(k+1) \ge (1-t) k + t k =k\,.
$$
It is also obvious that if $z_{k+1} > k$ then
$$
z_{k+1}(t) = (1-t)z_{k+1} + t(k+1) > (1-t) k + t k = k =z_k(t)
$$
for all $t\in [0,1]$\,.

Finally, it is clear that if $z_{k+1}= k+1$ then
$$
z_{k+1}(t)\equiv k+1\,.
$$
Thus $g$ (\ref{g-t}) is indeed a deformation retraction
of $G_k$ onto $G_{k+1}$\,.

Since $G_{|T|}$ is a one-point space we conclude that
$F_{|S|}$, and hence, the space $\Phi_{\vs}(\t)$ is contractible.

The proof of the fact that $\Phi_{\vs}$ is a cofibrant object
in the category of functors from $\J(\vs)$ to $\Top$ is
very similar to the proof of Theorem 7.2 from \cite{Bat}.

Following the arguments of \cite{Bat} we define the following
sequence of functors
$$
\Phi^m_{\vs}\,, \qquad  m\in \bbZ\,, \qquad m\ge 0\,.
$$
On the level of objects the functor $\Phi^m_{\vs}$
operates as
\begin{equation}
\label{Phi-vs-m}
\Phi^m_{\vs} (\t)
=
\begin{cases}
\Phi_{\vs}(\t) \,, \qquad  {\rm if} \quad |S|+ |T| < m\,,
\quad {\rm and}\quad \t \quad {\rm is~ a~ 2-tree}, \cr
\Phi_{\vs}(\t) \,, \qquad  {\rm if} \quad |S|+ |T|-1 < m\,,
\quad {\rm and}\quad  \t \quad {\rm is~ an~SC~ 2-tree}, \cr
\emptyset \,, \qquad {\rm otherwise}.
\end{cases}
\end{equation}
We would like to remark that the number $|S|+|T|$ (resp. $|S|+|T|-1$)
for a 2-tree $\t:S\to T$ (resp. for
an SC 2-tree $\t:S\to T$) is the dimension of the Fox-Neuwirth cell
$\FN_{\t}$\,. Thus the collection $\Phi^m_{\vs}$ may be considered
as a filtration of $\Phi_{\vs}$ by dimension.

We have the obvious sequence of natural transformations
$$
\Phi^0_{\vs} \to \Phi^1_{\vs} \to \Phi^2_{\vs} \to \dots
$$
and the functor $\Phi_{\vs}$ is the sequential colimit
$$
\Phi_{\vs} = \colim_{m} \Phi^m_{\vs}\,.
$$

Similarly to the proof of Theorem 7.2 from \cite{Bat}
we show that for every $m$ the natural transformation
$$
\Phi^m_{\vs} \to \Phi^{m+1}_{\vs}
$$
is a cellular extension generated by a cofibration.

Thus $\Phi_{\vs}$ is indeed a cofibrant object in
the category of functors from $\J(\vs)$ to the category
of topological spaces.

This completes the proof of Proposition
\ref{cofib-resol}.

\end{proof}

Now that we have a cofibrant resolution $\Phi_{\vs}$ of the
trivial functor from $\J(\vs)$ to $\Top$ we
prove Proposition \ref{weak-equiv} by showing that
the space
\begin{equation}
\label{X-vs}
X_{\vs} = \colim_{\J(\vs)} \Phi_{\vs}
\end{equation}
is contractible for every surjection $\vs\in D(S, N)$\,,
where $S$ is an (SC) set.

It is easy to see that
\begin{equation}
\label{X-vs1}
X_{\vs} = \bigcup_{\t\in \J(\vs)} \FN_{\t}\,.
\end{equation}

To get a more explicit description of the space $X_{\vs}$
(\ref{X-vs1}) we recall that the set $D(S, N)$ consists of
surjections
$$
\vs : \{1,2,3, \dots |S|+N\} \to S
$$
from the standard ordinal $\{1,2,3, \dots |S|+N\}$ to
the set $S$\,; the surjections $\vs$ should satisfy two
conditions {\bf I} and {\bf II} from the proof of
Lemma \ref{lmfil-se}; if, in addition $c_S= \a$, then
we should also impose on $\vs$ condition {\bf III}
from the proof of the same lemma.

Let us also recall that a 2-tree (an SC 2-tree)
$\t: S \to T$ belongs
to $\J(\vs)$ iff the following conditions are met:

--- if for $s\neq \ts$ there exist $i_1, i_2\in \vs^{-1}(\ts)$ and $i\in \vs^{-1}(s)$
such that $i_1 < i < i_2$ then $\t(s)< \t(\ts)$ in the (SC) ordinal $T$\,,

--- if $\t(s) = \t(\ts)$ and  $s <_{\t} \ts$ then
all elements of $\vs^{-1}(s)$
are smaller than all elements of $\vs^{-1}(\ts)$\,.

Here $<_{\t}$ is the total order on $S$ coming from
the structure of the (SC) 2-tree $\t$\,.

Thus the space $X_{\vs}$ (\ref{X-vs1}) consists of the
configurations $\{(x_s, y_s)\}$ from $\conf(S)$ satisfying
the following conditions:
\begin{enumerate}
\item[{\bf C1}] if $\exists$ $i_1, i_2\in \vs^{-1}(\ts)$ and $i\in \vs^{-1}(s)$
such that $i_1 < i < i_2$ then $x_{s} < x_{\ts}$

\item[{\bf C2}] if $x_{s}= x_{\ts}$ and all elements of $\vs^{-1}(s)$
are smaller than all elements of $\vs^{-1}(\ts)$ then
$y_s < y_{\ts}$\,.
\end{enumerate}

If $c_{S} = \a$ then we have to impose on the configuration
$\{(x_s, y_s)\}$ the additional condition
\begin{enumerate}

\item[{\bf C3}] if $\chi(s)= \a$ then $x_s = 0$ and if
$\chi(s)= \c$ then $x_{s} > 0$\,.

\end{enumerate}

\noindent\\
{\bf Remark.} Let $S$ be a usual (non-SC) set.
It can be shown that every surjection $\vs\in D(S, N)$
gives us a pair of complementary orders on the set $S$
in the sense of M. Kontsevich and Y. Soibelman \cite{KS}.
(See also Section 2 in \cite{Bat} about complementary
orders and higher trees.) To a pair of complementary
orders $>_0$ and $>_1$ M. Kontsevich and Y. Soibelman
assign a subspace $X_{>_0, >_1}$ \cite{KS} of the compactified
configuration space of points on $\bbR^2$ labeled by elements of $S$\,.
Our space $X_{\vs}$ is an uncompactified version
of the subspace considered by M. Kontsevich and Y. Soibelman
in \cite{KS}.

\subsubsection{Contractibility of $X_{\vs}$}
We give a detailed proof of the contractibility of $X_{\vs}$ (\ref{X-vs1})
in the SC case when the color $c_{S}$ of the SC set $S$ is $\a$\,.
The non SC case ($c_{S}=\c$) is very similar.

Although every SC 2-tree $\t\in \J(\vs)$ gives us a
total order $>_{\t}$ on $S$, we equip the set $S$ with yet another total order
which we denote by $<_{\vs}$\,. Namely, we set $s <_{\vs} \ts$ iff

--- either $\exists$ $i_1, i_2\in \vs^{-1}(\ts)$ and $i\in \vs^{-1}(s)$
such that $i_1 < i < i_2$ or

--- all elements of $\vs^{-1}(s)$ are smaller than all elements
of $\vs^{-1}(\ts)$\,.\\

\noindent
{\bf Warning.} In general, the order total $>_{\t}$ on $S$ coming
from the structure of an SC 2-tree $\t\in \J(\vs)$
does not coincide with the order $>_{\vs}$\,.
Thus, in Example \ref{primer-al-be-ga-de}, the map
$\vs$ induces on the SC set $S$ the order
$$
\al<\beta< \ga< \de\,.
$$
On the other hand we have a pruned SC 2-tree
$\t : \{\beta<\ga<\al<\de\} \to \{1,2,3,4\}$
which belongs to $\J(\vs)$\,. A similar example
can be found for an SC set $S$ with $c_{S}=\c$\,.\\

Using the total order $>_{\vs}$ we identify $S$ with the standard ordinal
$\{1 < 2 < 3 < \dots < |S|\}$\,.

Next we define the following functions on $\conf(S)$
\begin{equation}
\label{mu}
\mu_k(\{ (x_s, y_s)\}) = \min (y_k, y_{k+1}, \dots, y_{|S|})
\end{equation}
which are obviously continuous.

Then we introduce the sequence of subspaces
$$
X_{\vs} = Y_0 \supset Y_1 \supset \dots \supset Y_{|S|}\,,
$$
where $Y_k$ consists of configurations $\{(x_s, y_s)\}\in X_{\vs}$
satisfying the properties
\begin{equation}
\label{do-k}
y_s = y_1 + s-1\,, \qquad \forall \quad  s\le k\,,
\end{equation}
\begin{equation}
\label{posle-k}
\mu_{k+1}(\{ (x_s, y_s)\})= y_k+1\,.
\end{equation}

Let us show that $Y_{k+1}$ is homotopy equivalent to
$Y_{k}$ for all $k < |S|$\,.

For this purpose we introduce an intermediate
subspace $Z_k$
$$
Y_k\supset Z_k \supset Y_{k+1}\,.
$$
This subspace consists of configurations $\{(x_s, y_s)\}\in Y_k$
satisfying the property
\begin{equation}
\label{y-mu}
y_{k+1} = \mu_{k+1}(\{ (x_s, y_s)\})\,.
\end{equation}

Let us consider the map
$h: Y_k \times [0,1] \to Y_k $
\begin{equation}
\label{h-t}
h(\{(x_s, y_s)\}, t) =
\{(x_s, y_s(t) )\}\,,
\end{equation}
where
$$
y_s(t) =
\begin{cases}
y_s \,, \qquad  {\rm if} \qquad s \neq k+1\,, \cr
(1-t) y_{k+1} + t \mu_{k+1}(\{ (x_s, y_s)\}) \,, ~ {\rm if} ~ s = k+1 \,.
\end{cases}
$$
In order to show that $h(\{(x_s, y_s)\}, t) \in Y_k$
we only need to check condition {\bf C2} for all $t\in [0,1]$\,.

It is clear that
\begin{equation}
\label{ineq-kplus1}
y_{k+1} \ge y_{k+1}(t) \ge \mu_{k+1}(\{ (x_s, y_s)\})\,,
\qquad \forall \quad  t\in [0,1]\,.
\end{equation}

Since $\{(x_s, y_s)\}\in Y_k$ we have
$$
\mu_{k+1}(\{ (x_s, y_s)\}) >y_s\,, \qquad \forall \quad s \le k
$$
and hence
$$
y_{k+1}(t) > y_s\,, \qquad \forall \quad s \le k, \quad t\in [0,1]\,.
$$

Furthermore, since condition {\bf C2} is satisfied for $\{(x_s, y_s)\}$
we conclude that all points $(x_s, y_s)$ with $s>k+1$ and
$x_s=x_{k+1}$ lie above the point $(x_{k+1}, y_{k+1})$\,.
Combining this observation with inequality (\ref{ineq-kplus1})
we conclude that if $s > k+1$ and $x_s = x_{k+1}$ then
$y_{k+1}(t)< y_s$ for all $t\in [0,1]$\,.

It is clear that $h(\{(x_s, y_s)\}, 1)\in Z_k $ and
for all $\{(x_s, y_s)\}\in Z_k $ we have
$$
h(\{(x_s, y_s)\}, t) = \{(x_s, y_s)\}\,, \qquad \forall \quad t\in [0,1]\,.
$$
Thus $h$ is a deformation retraction of $Y_k$
onto $Z_k$\,.

It is clear that the subspace $Y_{k+1}$ consists
of configurations $\{(x_s, y_s)\}\in Z_k$ satisfying
the additional property
$$
\mu_{k+2}(\{(x_s, y_s)\}) = y_{k+1}+1\,.
$$
So we consider the map
$h_Z: Z_k \times [0,1] \to Z_k $
\begin{equation}
\label{h-t-Z}
h_Z(\{(x_s, y_s)\}, t) =
\{(x_s, y_s(t) )\}\,,
\end{equation}
where
$$
y_s(t) =
\begin{cases}
y_s \,, \qquad  {\rm if} \qquad s \le k+1\,, \cr
y_s + t\big(y_{k+1}+1 - \mu_{k+2}(\{ (x_s, y_s)\})\, \big)\,,
\qquad {\rm if} \qquad s > k+1\,.
\end{cases}
$$
In order to show that $h_Z$ lands in $Z_k$
we need to check condition {\bf C2} and condition (\ref{y-mu}).

Since
$$
\min (y_{k+2}(t), y_{k+3}(t), \dots, y_{|S|}(t)) =
$$
$$
\min (y_{k+2}, y_{k+3}, \dots, y_{|S|}) +
t\big(y_{k+1}+1 - \mu_{k+2}(\{ (x_s, y_s)\})\, \big) =
$$
$$
(1-t)\mu_{k+2}(\{ (x_s, y_s)\}) + t(y_{k+1}+1) \ge \mu_{k+1}(\{ (x_s, y_s)\})
$$
we conclude that
$$
\mu_{k+1}(\{ (x_s, y_s(t))\})
$$
does not depend on $t$\,. Thus condition (\ref{y-mu}) is satisfied.

Next, if $s \ge k+2$ then
$$
y_s(t) \ge \mu_{k+2}(\{ (x_s, y_s)\})  +
t\big(y_{k+1}+1 - \mu_{k+2}(\{ (x_s, y_s)\})\, \big) =
$$
$$
(1-t)\mu_{k+2}(\{ (x_s, y_s)\}) + t(y_{k+1}+1) > y_{k+1}
$$
for all $t\in (0,1]$ because $\mu_{k+2}(\{ (x_s, y_s)\}) \ge y_{k+1}$
and $y_{k+1}+1 > y_{k+1}$\,. Hence
$$
y_s(t) > y_{\ts}
$$
for all $s\ge k+2$, $\ts \le k+1$ and $t\in (0,1]$\,.

Furthermore, if for $s,\ts\ge k+2$ we have $y_s> y_{\ts}$
then obviously $y_s(t) > y_{\ts}(t)$ for all $t\in [0,1]$\,.
Thus we conclude that condition {\bf C2} is satisfied
for every configuration $h_Z(\{(x_s, y_s)\}, t)$\,.

It is not hard to see that for all $\{(x_s, y_s)\}\in Z_k$
$$
h_Z(\{(x_s, y_s)\}, 1)\in Y_{k+1}
$$
and for all $t\in [0,1]$ and $\{(x_s, y_s)\}\in Y_{k+1}$
$$
h_Z(\{(x_s, y_s)\}, t) = \{(x_s, y_s)\}\,.
$$
Thus $h_Z$ is a deformation retraction of $Z_k$ onto
$Y_{k+1}$.

We proved that $X_{\vs}$ is homotopy equivalent to
the subspace $Y_{|S|}$ which consists of configurations
$\{(x_s, y_s)\}\in X_{\vs}$ satisfying the property
\begin{equation}
\label{do-k-posle}
y_s = y_1 + s-1\,, \qquad \forall \quad  s\in S\,.
\end{equation}

To show that $Y_{|S|}$ is contractible we
set, as above, $S_{\a} = \chi^{-1}(\a)$ and
$S_{\c}= \chi^{-1}(\c)$\,.

Due to Condition {\bf C3} $x_{s}=0$ for all $s\in S_{\a}$
and $x_{s} >0 $ for all $s\in S_{\c}$\,.

Restricting the total order $>_{\vs}$ from $S$ to $S_{\c}$
we get an isomorphism
$$
\beta : S_{\c} \to \{1<2<3< \dots < |S_{\c}|\}
$$
from $S_{\c}$ to the standard ordinal $\{1<2<3< \dots < |S_{\c}|\}$\,.

Using this isomorphism we define the following map
$H: Y_{|S|}\times [0,1] \to Y_{|S|}$
\begin{equation}
\label{H-t}
H(\{(x_s, y_s)\}, t) =
\{(x_s(t), y_s)\}\,,
\end{equation}
where
$$
x_s(t) =
\begin{cases}
0 \,, \qquad  {\rm if} \qquad s\in S_{\a}\,, \cr
(1-t) x_s + t \beta(s) \,, ~ {\rm if} ~ s\in S_{\c} \,.
\end{cases}
$$

Let us show that $H$ indeed lands in $X_{\vs}$\,.

Since $x_s(t)>0$ for all $s\in S_{\c}$ and $t\in [0,1]$
we need to check Condition {\bf C1} only for $s, \ts\in S_{\c}$\,.

If $s, \ts\in S_{\c}$, $s\neq \ts$ and
there exists $i_1, i_2\in \vs^{-1}(\ts)$ and $i\in \vs^{-1}(s)$
such that $i_1 < i < i_2$ then $x_{s} < x_{\ts}$ and $\beta(s)< \beta(\ts)$
according to the definition of the total order $<_{\vs}$ on $S$\,.
Hence
$$
(1-t) x_s + t \beta(s) < (1-t) x_{\ts} + t \beta(\ts)\,,
\qquad \forall \quad t \in [0,1]\,.
$$

Condition {\bf C2} is satisfied automatically because
for every configuration in $Y_{|S|}$ we have (\ref{do-k-posle}).

Condition {\bf C3} is also obviously satisfied.

It also follows from the construction that
$$
H(\{(x_s, y_s)\}, t) \in Y_{|S|}
$$
for all $\{(x_s, y_s)\}\in Y_{|S|}$ and
$t \in [0,1]$\,.

Furthermore, it is cleat that $H$ is a deformation
retraction of $Y_{|S|}$ onto the subspace $L$ of
configurations $\{(x_s, y_s)\}\in X_{\vs}$ with
$$
y_{s} = y_1 + s-1\,, \qquad \forall \quad s\in S\,,
$$
$$
x_s=0\,,  \qquad \forall \quad s\in S_{\a}\,,
$$
and
$$
x_s=\beta(s)\,,  \qquad \forall \quad s\in S_{\c}\,.
$$
The subspace $L$ is obviously homeomorphic to the real line $\bbR$\,.

Thus we conclude that $Y_{|S|}$ and hence
$X_{\vs}$ is contractible.

This completes the proof of Proposition \ref{weak-equiv} and
hence the proof of Theorem \ref{SymRbr-braces}.

\begin{Example}
\label{primer}
{\rm
Let us illustrate the proof of contractibility for $X_{\vs}$
with the map
$$
\vs:\{1,2,3,4,5,6\} \to \{\al, \beta, \ga, \de\}
$$
from Example \ref{primer-al-be-ga-de}.
Recall that $c_S=\a$\, $\chi(\al)= \chi(\ga)= \chi(\de) =\c$\,,
and $\chi(\beta)= \a$\,.

The space $X_{\vs}$ consists of configurations from
$\conf(\{\al, \beta, \ga, \de\})$ satisfying the following
conditions:

{\it i}) $x_{\beta}=0 < x_{\ga} < x_{\de}$\,,

{\it ii}) $x_{\al} >0 $\,,

{\it iii}) if $x_\al = x_\ga$ then $y_{\al} < y_{\ga}$\,,

{\it iv}) if $x_\al = x_\de$ then $y_{\al} < y_{\de}$\,.

In the first step of the above proof we retract $X_{\vs}$
onto the subspace $Z_0$ of configurations satisfying the
property
$$
y_{\al} = \min (y_{\al}, y_{\beta}, y_{\ga}, y_{\de})\,.
$$
Second, we retract $Z_0$ to the subspace $Y_1$ of configurations
satisfying in addition the property
$$
\min (y_{\beta}, y_{\ga}, y_{\de}) = y_{\al}+1\,.
$$
Next, we retract $Y_1$ to the subspace $Z_1$ which
consists of configurations $\{(x_s, y_s)\}\in Y_1$
with
$$
y_{\beta} = \min (y_{\beta}, y_{\ga}, y_{\de})\,.
$$

We keep doing so until we get the subspace $Y_4$
of configurations $\{(x_s, y_s)\}\in X_{\vs}$ with
\begin{equation}
\label{zazhal}
y_{\de}=y_{\ga} + 1= y_{\beta} + 2 = y_{\al} + 3\,.
\end{equation}

Then we retract the resulting space $Y_4$
to the subspace $L$ of configurations
$\{(x_s, y_s)\}\in X_{\vs}$ satisfying (\ref{zazhal}) and
$$
x_{\al}=1, \qquad x_{\beta} =0, \qquad x_{\ga}=2, \qquad x_{\de} = 3\,.
$$
Performing the latter retraction
we may need to move horizontally the point labeled
by $\al$ through the vertical lines containing the points labeled by
$\ga$ and $\de$\,. In doing so we will not violate
conditions {\it iii}) and {\it iv})  because the
inequalities $y_{\al}< y_{\ga}$ and $y_{\al}< y_{\de}$
are already achieved at the previous steps.

The subspace $L$ is obviously homeomorphic to the real
line. Thus contractibility of $X_{\vs}$ follows.
}
\end{Example}

\section{Proof of Theorem \ref{th}}\label{proof2}
\label{pfth}
Let us return to the dg (SC) 2-operad $\scR$
introduced in Definition \ref{defi-br} and show that

\begin{Proposition} \label{br-styag}
For every pruned 2-tree (pruned SC 2-tree) $\t$

1) the cochain complex $\scR(\t)$ is contractible;

2) there exist natural identifications
$$
H^0(\scR(\t))=\k
$$
under which all operadic composition
maps of the operad $H^\bullet(\scR)$
evaluated on $1\in \k$ produce $1\in \k$\,.
\end{Proposition}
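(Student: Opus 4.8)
The plan is as follows. Since the inclusion $\scR(\t)\hookrightarrow\sc(\t)$ is a quasi-isomorphism (as noted right after Definition~\ref{defi-br}), part~(1) amounts to saying that $H^{\bullet}(\sc(\t))$ is $\k$ concentrated in degree~$0$, and it is convenient to compute this through $\scR(\t)$ itself. By Lemma~\ref{lmfil1}(2) the complex $F_{N}\sc(\t)$ lives in degrees $\ge-N$, so by the very definition of $G^{N}\sc(\t)$ the map sending $v\in G^{N}\sc(\t)$ to its class in $\big(F_{N}\sc(\t)/F_{N-1}\sc(\t)\big)^{-N}$ is injective with image the space of $d_{1}$-cocycles, i.e.\ $H^{-N}$ of the graded piece, which by (\ref{H-gadget-F-N}), (\ref{H-gadgetSC-F-N}) equals $\k[D(\t,N)]$. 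Since $d$ carries $G^{N}\sc(\t)$ into $G^{N-1}\sc(\t)$, this exhibits $\scR(\t)$ as the cochain complex
$$
\cdots\longrightarrow\k[D(\t,2)]\longrightarrow\k[D(\t,1)]\longrightarrow\k[D(\t,0)]\longrightarrow0
$$
with $\k[D(\t,N)]$ placed in degree $-N$ — precisely the $E_{1}$-page of the spectral sequence of the filtration $F$, which collapses at $E_{2}$ because $E_{1}$ is concentrated in a single line. Inspecting the totalization differential, the map $\k[D(\t,N)]\to\k[D(\t,N-1)]$ is, up to signs, the familiar deletion (surjection-operad type) differential restricted to those surjections obeying conditions {\bf A}, {\bf B}, {\bf C}: it takes a surjection $\vs$ to a signed sum of the surjections obtained by deleting one occurrence of a letter that $\vs$ repeats (conditions {\bf B}, {\bf C} are preserved by such deletions, so this is well defined).

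Part~(1) thus becomes the claim that this complex is acyclic except in degree $0$, with $H^{0}=\k$. Following the strategy of the rest of the paper I would realise it as the (regraded) cellular chain complex of a \emph{contractible} cell complex $P_{\t}$ naturally attached to the pruned $2$-tree $\t:S\to T$. A natural candidate for $P_{\t}$ is a compactified/normalised configuration space of $S$-labelled points in $\bbR^{2}$ subordinate to $\t$, cut into cells in the same manner in which $\conf(S)$ is cut into the Fox--Neuwirth cells $\FN_{\wt}$, arranged so that its $N$-cells are exactly the elements of $D(\t,N)$ and its cellular boundary is the differential above. Contractibility of $P_{\t}$ is then obtained by the ``retract one coordinate at a time'' technique already used in the proof of Proposition~\ref{cofib-resol} and in the proof of contractibility of $X_{\vs}$ in Section~\ref{proof1}: one successively fixes the $y$-coordinates and then the $x$-coordinates (keeping, in the SC case $c_{S}=\a$, the $\a$-coloured points on the line $x=0$), each step being an explicit deformation retraction onto a smaller subspace, with a point or a line at the end. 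An equivalent, purely combinatorial, route is to write down a contracting homotopy on $\bigoplus_{N}\k[D(\t,N)]$ directly, in the spirit of the standard acyclicity argument for the surjection ($E_{\infty}$) operad: prepending to $\vs$ the letter of $S$ lying over the minimal element of $T$ (or, in the SC case, a suitable $\c$-coloured letter) respects {\bf A}, {\bf B}, {\bf C}, and the homotopy identity $dh+hd=\mathrm{id}-p$ follows by the usual bookkeeping of signs, $p$ being the projection onto $\k$. The SC case is handled the same way. This is the technical heart of the proposition.

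For part~(2), take the identification $H^{0}(\scR(\t))\cong\k$ sending $1$ to the class $[\vs_{0}]$ of any $\vs_{0}\in D(\t,0)$ (a total order on $S$ subject to {\bf B}, {\bf C}); part~(1) guarantees this class is independent of $\vs_{0}$ and is a generator — in the geometric model the vertices of the connected complex $P_{\t}$ all represent the same nonzero class in $H_{0}$. This identification is natural in $\t$ with respect to isomorphisms of $2$-trees, since $\scR$ is a functor. Finally, the operadic composition maps of $\scR$ preserve the filtration $F$ (Lemma~\ref{lmfil}), and in filtration degree $0$ the composition $\scR(\t_{1})\otimes\bigotimes_{s_{1}}\scR(P^{-1}s_{1})\to\scR(\t)$ is the ``substitution of total orders'' map, i.e.\ it sends a tuple of basis elements of the $\k[D(\cdot,0)]$'s to a single basis element of $\k[D(\t,0)]$. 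Passing to $H^{\bullet}$, it therefore carries $(1,\dots,1)$ to $1$, which is the assertion; the SC case is identical.

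The main obstacle is the contractibility in part~(1): producing the right contractible model $P_{\t}$ together with a cellular structure indexed by $D(\t,N)$ matching the totalization differential (equivalently, making the combinatorial contracting homotopy compatible with the linking/ordering conditions {\bf B}, {\bf C} induced by $\t$). Everything else is bookkeeping. Once Proposition~\ref{br-styag} is established it gives, together with Proposition~\ref{scR-reduced}, that the natural augmentation $\scR\to\triv$ is a weak equivalence of reduced (SC) $2$-operads; combined with Lemma~\ref{scmodel}, Theorem~\ref{SymRbr-braces}, and Batanin's Theorems~\ref{bat33}, \ref{bat3} this yields $\La\simeq|\se|\simeq\sym\,\R\scR\simeq\sym\,\R\triv$, the singular chain operad of Voronov's Swiss Cheese operad, thereby proving Theorem~\ref{th}.
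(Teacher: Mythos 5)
Your reduction of part (1) to the statement that the complex $\cdots\to\k[D(\t,1)]\to\k[D(\t,0)]\to 0$ has cohomology $\k$ concentrated in degree $0$ is correct (the identification $G^N\sc(\t)\cong H^{-N}\bigl(F_N\sc(\t)/F_{N-1}\sc(\t)\bigr)=\k[D(\t,N)]$ does follow from Lemma \ref{lmfil1}), but the step you yourself call the technical heart is exactly where the argument breaks. The explicit contracting homotopy you propose --- prepending to $\vs$ the letter of $S$ lying over the minimal element of $T$ --- does not preserve condition {\bf B}. Take $S=T=\{1,2\}$ with $\t=\mathrm{id}$: then $D(\t,0)=\{(1,2),(2,1)\}$ and $D(\t,1)=\{(2,1,2)\}$, the sequence $(1,2,1)$ being excluded by {\bf B} because it would force $\t(2)<\t(1)$; prepending the letter $1$ to $(2,1)$ produces precisely the forbidden $(1,2,1)$. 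Prepending the letter over the \emph{maximal} element of $T$ repairs this example but fails whenever the top fibre of $\t$ has more than one element (the new first occurrence then violates {\bf C} against the other letters of that fibre), and already for $\t:\{1<2<3\}\to\{1<2\}$ with $\t(1)=1$, $\t(2)=\t(3)=2$ no single ``prepend a fixed letter'' map contracts the complex: one needs both $(2,1,2,3)$ and $(2,3,1,3)$ to kill $H^0$ down to one dimension. Your alternative route --- a contractible cell complex $P_\t$ with $N$-cells $D(\t,N)$ and the deletion differential as cellular boundary --- is the right intuition, but it is only asserted; the space is never defined and the ``retract one coordinate at a time'' argument is invoked for it by analogy only. (The identification of the $d_1$-differential with the deletion differential is likewise asserted rather than checked.)

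The paper takes a genuinely different route that bypasses the associated graded entirely: it proves directly that the topological realization $|\scseq(\t)|_{\top}$ is contractible, by exhibiting a homeomorphism $|\scseq(\t)|_{\top}\cong|\seq(\wt)^{[0]}_{\bullet,\ldots,\bullet}|_{\top}\times\D^{|S_{\a}|}$ where $\wt$ is the non-SC $2$-tree obtained by deleting $S_{\a}$ and the minimum of $T$, describing the first factor as a space of labelled segment decompositions of $[0,|S_{\c}|]$, and invoking the induction on $|T|$ from Proposition 6.4 of \cite{dgcat} which shows that this space is a product of simplices; contractibility of $\scR(\t)$ then follows from the quasi-isomorphism $\scR\hookrightarrow\sc$ of Lemma \ref{lmfil1}. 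That product of simplices is in effect the space $P_\t$ you are looking for, so to salvage your approach you would either have to carry out an analogous induction on $|T|$ at the level of the complexes $\bigoplus_N\k[D(\t,N)]$ or simply fall back on the paper's topological argument. Your treatment of part (2) is fine modulo part (1) and is essentially equivalent to the paper's.
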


\begin{proof} Due to Lemma \ref{lmfil1} the inclusion
$$
\scR \hookrightarrow \sc
$$
is a quasi-isomorphism of dg SC 2-operads.
We start with the non-SC case. We have to show that
for every
pruned 2-tree $\t:S\to T$,
 the cochain complex
$$|\seq|(\t)
$$
is contractible. This  was
proved in Proposition 6.4 in \cite{dgcat}.
For the convenience of the reader we briefly recall
the argument.

By definition, $|\seq|(\t)$ is
the realization of the cosimplicial/polysimplicial
set (see Section \ref{seq})
\begin{equation}
\label{c-p-s-set}
\{\{I_s\}_{s\in S } ; J \} \to
\seq(\t)^J_{\{I_s\}_{s\in S }}
\end{equation}
in the category of cochain complexes.

Thus we need to show that realizing (\ref{c-p-s-set})
in the category of topological spaces we get a contractible
space.

For this purpose we fix the ordinal $J$ and
consider the corresponding polysimplicial set
\begin{equation}
\label{p-s-set-J}
\{\{I_s\}_{s\in S }\} \to
\seq(\t)^J_{\{I_s\}_{s\in S }}\,.
\end{equation}
It is shown in \cite{dgcat} that for every (non-empty)
ordinal $J$
\begin{equation}
\label{h-J}
|\seq(\t)^J_{\bullet,\ldots,\bullet}|_{\top}
\cong |\seq(\t)^{[0]}_{\bullet,\ldots,\bullet}|_{\top}
\times \D^J
\end{equation}
and moreover the collection of homeomorphisms (\ref{h-J})
gives an isomorphism of the corresponding cosimplicial
topological spaces. Here $[0]$ is the one element ordinal.

Thus, in order to prove contractibility
of the realization of (\ref{c-p-s-set}) we need
to prove contractibility of the topological space
\begin{equation}
\label{ono}
|\seq(\t)^{[0]}_{\bullet,\ldots,\bullet}|_{\top}\,.
\end{equation}

This space admits the following explicit description.
A point of $|\seq(\t)^{[0]}_{\bullet,\ldots,\bullet}|_{\top}$
is given by an equivalence class of decompositions of the
segment $[0, |S|]$ into a number of subsegments
labeled by elements of $S$. The labeling should satisfy the following
conditions:

$\aleph 1)$ if $s_1, s_2\in S$ and a segment labeled by $s_2$ lies between
segments labeled by $s_1$ then $\t(s_1)>\t(s_2)$ in $T$\,,

$\aleph 2)$ if for $s_1,s_2\in S$ we have
$\t(s_1)= \t(s_2)$ and $s_1 < s_2$ then all segments labeled
by $s_1$ are on the left-hand side of all segments labeled by $s_2$\,,

$\aleph 3)$ for every $s\in S$ the total length of all
segments labeled by $s$ is $1$\,.

Two such decompositions are equivalent if one is obtained from
the other by a number of operations of
the following two types:

a) adding into or deleting from our decomposition a number of labeled segments
of length 0\,,

b) joining two neighboring segments of our decomposition labeled by an
element $s \in S$ into one segment labeled by $s$,
or the inverse operation.

In \cite{dgcat} it was proved, by induction on $|T|$,
that the space (\ref{ono}) is a product of simplices and
hence (\ref{ono}) is contractible. Thus we deduce that so is
the cochain complex $|\seq|(\t)$\,.

We now pass to the  SC-case.
Let  $\t: S\to T$ be a pruned SC 2-tree
with $S= S_{\a}\sqcup S_{\c}$\,, where $S_{\a}$ is
the preimage of the minimal element of $T$ and
$S_{\c} = S\setminus S_{\a}$\,. The subset $S_{\a}$
may, in principle, be empty.

Recall that $\sc(\t)$ is the realization of
the polysimplicial set
\begin{equation}
\label{p-s-set}
\{ \{I_s\}_{s\in S_{\c}}\}~ \to ~
\scseq(\t)_{\{I_s\}_{s\in S_{\c}} }
\end{equation}
in the category of cochain complexes.

Each element $u$ of $\scseq(\t)_{\{I_s\}_{s\in S_{\c} }}$
is a total order $>_u$ on
$$
\I = \bigsqcup\limits_{s\in S_{\c}} I_s
\sqcup S_{\a}
$$
satisfying the following conditions:

--- it agrees with the total order on each $I_{s}$ and
with the total order on $S_{\a}$\,,

--- if $i,k\in I_{s_1}$, $j\in I_{s_2}$,
$s_1\neq s_2$ and $i<_u j<_u k$, then
$\t(s_2)<\t(s_1)$\,,

--- if $s_1,s_2\in S_{\c}$, $s_1<s_2$, and
$\t(s_1)=\t(s_2)$, then all elements of $I_{s_1}$
are strictly smaller than all elements of
$I_{s_2}$.

As well as the space (\ref{ono}) the realization
$|\scseq(\t)|_{\top}$ of (\ref{p-s-set}) has the following
explicit description.
A point of $|\scseq(\t)|_{\top}$
is given by an equivalence class of decompositions of the
segment $[0, |S|]$ into a number of subsegments
labeled by elements of $S$. The labeling should satisfy the following
conditions:

$\aleph 0')$ for each $s\in S_{\a}$ there is exactly one segment labeled by $s$
and its length is $1$; if for $s_1, s_2\in S_{\a}$ we have
$s_1<s_2$ then the segment labeled by $s_1$ is on the left-hand side
of the segment labeled by $s_2$\,.

$\aleph 1')$ if $s_1, s_2\in S_{\c}$ and a segment labeled by $s_2$ lies between
segments labeled by $s_1$ then $\t(s_1)>\t(s_2)$\,,

$\aleph 2')$ if for $s_1,s_2\in S_{\c}$ we have
$\t(s_1)= \t(s_2)$ and $s_1 < s_2$ then all segments labeled
by $s_1$ are on the left-hand side of all segments labeled by $s_2$\,,

$\aleph 3')$ for every $s\in S_{\c}$ the total length of all
segments labeled by $s$ is $1$\,.

Two such decompositions are equivalent if one is obtained from
the other by a number of operations of
the following two types:

a) adding into or deleting from our decomposition a number of labeled segments
of length 0\,,

b) joining two neighboring segments of our decomposition labeled by an
element $s \in S_{\c}$ into one segment labeled by $s$,
or the inverse operation.

If we remove all elements of $S_{\a}$ from $S$ and the minimal
element $t_{min}$ from $T$ then we get a usual pruned (non-SC) 2-tree
\begin{equation}
\label{tree-c}
\wt  = \t \Big|_{S_{\c}} : S_{\c} \to T \setminus \{t_{min}\}\,.
\end{equation}
To this 2-tree we assign the following polysimplicial set
\begin{equation}
\label{p-s-set-0}
\{ \{I_s\}_{s\in S_{\c} } \} \to
\seq(\wt)^{[0]}_{\{I_s\}_{s\in S_{\c} }}
\end{equation}
and the corresponding topological space
\begin{equation}
\label{ono1}
|\seq(\wt)^{[0]}_{\bullet,\ldots,\bullet}|_{\top}
\end{equation}
which was explicitly described above.
(The space (\ref{ono1}) is obtained from the space (\ref{ono})
via replacing $\t$ by $\wt$.)

We have the obvious projection
$$
P: |\scseq(\t)|_{\top} \to
|\seq(\wt)^{[0]}_{\bullet,\ldots,\bullet}|_{\top}
$$
which sends a point of $|\scseq(\t)|_{\top}$ to
a point of $|\seq(\wt)^{[0]}_{\bullet,\ldots,\bullet}|_{\top}$ by
collapsing each segment labeled by an element of $S_{\a}$ to a point.

Conversely, given:

$i)$ a point $x\in
|\seq(\wt)^{[0]}_{\bullet,\ldots,\bullet}|_{\top}$\,, and

$ii)$ a monotonous map $U: S_{\a} \to [0, |S_{\c}|]$\\
one can reconstruct a point in  $|\seq(\t)|_{\top}$ by
inserting unit segments labeled by $s\in S_{\a}$
in the place of the point $U(s)$\,.

Thus we conclude that
$$
|\scseq(\t)|_{\top} \cong
|\seq(\wt)^{[0]}_{\bullet,\ldots,\bullet}|_{\top} \times \D^{|S_{\a}|}\,.
$$
Due to Proposition 6.4 from \cite{dgcat} the first
component $|\seq(\wt)^{[0]}_{\bullet,\ldots,\bullet}|_{\top}$
is contractible. Hence so is $|\scseq(\t)|_{\top}$\,.

Thus we proved that $\sc(\t)$ is contractible for
every pruned SC 2-tree $\t$\,.

The identifications from Part 2) of this proposition
come from the fact that the topological spaces
$$
|\scseq(\t)^{\bul}_{\bullet,\ldots,\bullet}|_{\top}
$$
for pruned 2-trees $\t$ and
$$
|\scseq(\t)_{\bullet,\ldots,\bullet}|_{\top}
$$
for pruned SC 2-trees $\t$ are contractible.
These topological realizations inherit the operadic
compositions, whence Part 2) of this proposition.
\end{proof}

Proposition \ref{br-styag} implies that the cofibrant resolution
$\R \scR$ of $\scR$ is also a cofibrant resolution
of the trivial (SC) 2-operad $\triv$ in the category
of reduced (SC) 2-operads over cochain complexes.

Therefore, due to Batanin's theorem (Theorem \ref{bat3})
the symmetrization $\sym \R \scR$ of $\R \scR$ is quasi-isomorphic
to the singular chain operad of Voronov's
Swiss Cheese operad $\SC_2$\, (in particular, the non-SC part of $\sym \R \scR$ is
quasi-isomorphic to the singular chain operad of the little disc operad
 (Theorem \ref{bat33})).

Due to Theorem \ref{SymRbr-braces} the SC operad
$\sym \R \scR$ is quasi-isomorphic to $\scoR$ which is,
in turn, quasi-isomorphic to the SC operad
$|\se|$ by Lemma \ref{lmfil-se}.

Finally, by construction
the SC operad $|\se|$ is isomorphic to the operad
$|\op|$\,.

Thus we conclude that the two-colored  operad $|\op|$
is  quasi-isomorphic
to the singular chain operad of Voronov's
Swiss Cheese operad $\SC_2$\, (and the non-SC part of $|\op|$ is
quasi-isomoprhic to the singular chain operad of the little disc operad).

It remains to show that the induced action
of  $H_{-\bul}(\SC_2)$ on the pair $(HH^{\bul}(A,A), A)$
coincides with the one given in Proposition \ref{HH-swiss}.
For this purpose we present operations on the pair
\begin{equation}
\label{para}
(\,C^{\bul}(A,A), A\,)
\end{equation}
which come from the action of $|\op|$
and which induce on $(HH^{\bul}(A,A), A)$
the  $H_{-\bul}(\SC_2)$-algebra structure
from Proposition \ref{HH-swiss}\,.

These operations are the cup-product and the Gerstenhaber
bracket \cite{Ger} on $C^{\bul}(A,A)$, the associative product on $A$,
and the following contraction of a cochain $P$ with
elements of the algebra $A$:
\begin{equation}
\label{contraction}
i(P, a) = a\,P(1,1,\dots, 1)~ : ~ C^{\bul}(A,A)\otimes A \to A\,.
\end{equation}
We would like to remark that since $C^{\bul}(A,A)$ is
the normalized Hochschild complex only degree zero cochains
contribute to the contraction.

These operations induce the desired $H_{-\bul}(\SC_2)$-algebra structure
on $(HH^{\bul}(A,A), A)$ and they obviously come from the action of
the SC operad $|\op|$ on the pair (\ref{para}).

Since the cohomology operad $H^{\bul}(|\op|)$ of $|\op|$ is
isomorphic to $H_{-\bul}(\SC_2)$ we conclude that the action of
$|\op|$ on (\ref{para}) induce the desired $H_{-\bul}(\SC_2)$-algebra
structure on $(HH^{\bul}(A,A), A)$\,.

Theorem \ref{th} is proved.  $\Box$

\section*{Appendix}
Let $[n]$ be the standard ordinal $\{0,1,2, \dots, n\}$\,.

Given a collection of $k$ ordinals $[n_1], [n_2], \dots, [n_k]$
we consider the following ordinal
\begin{equation}
\label{cal-I}
\I_{n_1, \dots, n_k} = [n_1]\sqcup [n_2] \sqcup \dots \sqcup [n_k]\,,
\end{equation}
where the order is defined by the following rule:
{\it for $i_1\in [n_{l_1}]$ and $i_2\in [n_{l_2}]$
$i_1 < i_2$ if
\begin{itemize}

\item $l_1 < l_2$ or

\item $l_1 = l_2$ and $i_1 < i_2$ in $[n_{l_1}]$\,.

\end{itemize}
}

Given ordinals $J$, $[n_1], [n_2], \dots, [n_k]$
the collection
\begin{equation}
\label{hom-n-J}
(\Xi_k)^J_{n_1, \dots, n_k} =
\hom_{\Delta}(\I_{n_1, \dots, n_k}, J)
\end{equation}
form a polysimplicial/cosimplicial set.
Indeed $(\Xi_k)^J_{n_1, \dots, n_k}$ is simplicial in
$[n_1], [n_2], \dots, [n_k]$ and cosimplicial
in $J$\,.

In this appendix we show that
\begin{Proposition}
\label{Xi-realize}
The cochain complex $|\Xi_k|$ is concentrated in
nonnegative degrees. Furthermore,
\begin{equation}
\label{Cohomol}
H^{\bul}(|\Xi_k|) =
\begin{cases}
\k \,, ~~ {\rm if} ~ \bul = 0\,, \cr
0\,, ~~ {\rm otherwise}\,.
\end{cases}
\end{equation}
\end{Proposition}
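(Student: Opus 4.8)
The plan is to identify the cochain complex $|\Xi_k|$ with the totalization of the cosimplicial/polysimplicial set $(\Xi_k)^J_{n_1,\dots,n_k}=\hom_\Delta(\I_{n_1,\dots,n_k},J)$ and to compute its cohomology by recognizing the underlying topological realization as a contractible space. First I would recall that by the definition in Subsection \ref{202} we have
$$
|\Xi_k| = \hom_\Delta\bigl(\S^\bullet,\, |(\Xi_k)^\bullet_{\bullet,\dots,\bullet}|\bigr),
$$
where the inner realization is taken over the $k$ simplicial directions $n_1,\dots,n_k$ and the outer $\hom_\Delta$ over the cosimplicial direction $J$. Since $\S([n])^\bullet = \overline{C_{-\bullet}}(\Delta^n,\k)$ computes the (normalized) chains of the simplex, totalizing a (co)simplicial set against $\S$ in the category of complexes is the same, up to quasi-isomorphism, as applying the singular chain functor to the topological realization. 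So it suffices to prove that the topological space
$$
Y := \bigl|\,(\Xi_k)^{[0]}_{\bullet,\dots,\bullet}\,\bigr|_{\top}
$$
is contractible, and then to control the cosimplicial direction $J$.

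The second step is to analyze the $J$-direction. Here I would show, exactly as in the argument recalled in the proof of Proposition \ref{br-styag} (equation (\ref{h-J})), that for every non-empty ordinal $J$ there is a homeomorphism
$$
\bigl|(\Xi_k)^J_{\bullet,\dots,\bullet}\bigr|_{\top} \cong \bigl|(\Xi_k)^{[0]}_{\bullet,\dots,\bullet}\bigr|_{\top} \times \Delta^J,
$$
compatible with the cosimplicial structure in $J$. Intuitively, a morphism $\I_{n_1,\dots,n_k}\to J$ is the same as a non-decreasing function on the totalized interval, which is determined by a "shape" part (the non-decreasing step function up to reparametrization, living in the $[0]$-realization) and a point of $\Delta^J$ recording the values; the decomposition is natural in $J$. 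Given this, $\hom_\Delta(\S^\bullet, |(\Xi_k)^\bullet|)$ collapses: totalizing $\Delta^J$ against $\S^\bullet$ over $\Delta$ yields $\k$ in degree $0$, so $|\Xi_k|$ is quasi-isomorphic to the singular chains of $Y$ placed in non-positive degrees, i.e. (after the grading convention of the paper which puts cochains in non-negative degrees) concentrated in degrees $\ge 0$ with $H^\bullet(|\Xi_k|)\cong H_{-\bullet}^{\mathrm{sing}}(Y;\k)$.

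The third and main step is to prove that $Y=|(\Xi_k)^{[0]}_{\bullet,\dots,\bullet}|_{\top}$ is contractible. A point of $Y$ is an equivalence class of decompositions of the segment $[0,k]$ into subsegments labeled by $\{1,\dots,k\}$, where the total length of the segments labeled by $l$ is $1$ for each $l$, and two decompositions are identified under inserting/deleting length-zero segments and merging adjacent segments with the same label; crucially, unlike the space (\ref{ono}) in the proof of Proposition \ref{br-styag}, there are \emph{no} tree-type constraints ($\aleph 1$)–($\aleph 2$), since $(\Xi_k)$ imposes only the order conditions coming from $\I_{n_1,\dots,n_k}$ being a disjoint-union ordinal. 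I would contract $Y$ by the same kind of straight-line homotopy used in Proposition \ref{cofib-resol}: push the whole configuration toward the canonical decomposition in which the segment labeled $l$ is exactly $[l-1,l]$. Concretely, parametrize a generic point by the lengths $\{x^l_j\}$ of its constituent segments (with $\sum_j x^l_j = 1$) and linearly interpolate these toward the canonical lengths; one checks this stays inside $Y$ for all $t\in[0,1]$ and fixes the basepoint, giving a deformation retraction onto a point. Since no order constraint is violated by shrinking or enlarging segments monotonically, this is straightforward. Hence $Y\simeq \pt$, so $H^\bullet(|\Xi_k|)=\k$ in degree $0$ and $0$ otherwise.

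The step I expect to be the main obstacle is making the $J$-direction decomposition (\ref{h-J})-analogue fully precise and natural in the cosimplicial variable — in particular checking that the homeomorphism $|(\Xi_k)^J|_{\top}\cong Y\times\Delta^J$ commutes with the coface/codegeneracy maps in $J$ so that the $\hom_\Delta(\S^\bullet,-)$ really does reduce to singular chains of $Y$. The contractibility of $Y$ itself, while it requires care in bookkeeping the segment lengths, is routine once the straight-line homotopy is written down, and reduces to the (even simpler) special case of the arguments already carried out in Proposition \ref{cofib-resol} and Proposition \ref{br-styag}.
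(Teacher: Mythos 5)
Your overall route is essentially the paper's: for each fixed cosimplicial degree the polysimplicial realization is identified with a (stretched) simplex, and the remaining cosimplicial direction is then collapsed. The paper carries out the second step by exhibiting the explicit acyclic subcomplex $\Te$ of the bicomplex and observing that the induced differential on the row cohomologies $H^0(|\Xi_k|^{\bul,m})\cong\k$ is the alternating pattern $0,\mathrm{id},0,\mathrm{id},\dots$; your appeal to ``totalizing $\Delta^J$ against $\S^\bullet$ yields $\k$ in degree $0$'' is exactly this computation, and, as you yourself anticipate, it is the step that needs to be written out rather than asserted.

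There is, however, a concrete error in your description of $Y=|(\Xi_k)^{[0]}_{\bullet,\dots,\bullet}|_{\top}$, and it makes your second step internally inconsistent as stated. By (\ref{cal-I}) the order on $\I_{n_1,\dots,n_k}$ is the concatenation order (all of $[n_1]$ precedes all of $[n_2]$, and so on), so $(\Xi_k)^{[0]}_{n_1,\dots,n_k}=\hom_{\Delta}(\I_{n_1,\dots,n_k},[0])$ is a one-point set for every multidegree; the polysimplicial set is terminal and $Y$ is a single point. There is no shuffle or interleaving datum anywhere in $\Xi_k$, so the space of ``decompositions of $[0,k]$ into labeled subsegments with no order constraints'' that you describe is the realization of a different and much larger polysimplicial set (essentially $\se(S)^{[0]}$ with the interleaving conditions dropped), and with that $Y$ the claimed homeomorphism $|(\Xi_k)^{[m]}|_{\top}\cong Y\times\Delta^m$ already fails at $m=0$. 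The error happens to be harmless for the conclusion, since the correct $Y$ is trivially contractible and $|(\Xi_k)^{[m]}|_{\top}$ is the stretched $m$-simplex $\{x_i\ge 0,\ x_0+\dots+x_m=k\}$ exactly as in the paper's proof, but it reflects a misreading of the definition of $\I_{n_1,\dots,n_k}$. Finally, the first assertion of the Proposition concerns the complex $|\Xi_k|$ itself, not merely its cohomology; it requires the observation that a non-degenerate element of $\hom_{\Delta}(\I_{n_1,\dots,n_k},J)$ forces $|J|-1-\sum_i n_i\ge 0$, which your argument does not supply.
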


\noindent
{\bf Proof.} The first statement is very easy.
Indeed, an element $v \in \hom_{\Delta}(\I_{n_1, \dots, n_k}, J) $
will not contribute to the realization if it is
degenerate. It is clear that if
$$
|J| <  \sum_{i=1}^k (n_i+1)  - k + 1
$$
then $v$ is degenerate.
Therefore, elements  $v \in \hom_{\Delta}(\I_{n_1, \dots, n_k}, J) $
with
$$
|J|-1 - \sum_{i=1}^k n_i < 0
$$
will not contribute to the realization. Hence the
cochain complex $|\Xi_k|$ is indeed concentrated in
nonnegative degrees.

The cochain complex $| \Xi_k |$ can be considered as
bicomplex
\begin{equation}
\label{bicomplex}
|\Xi_k| = |\Xi_k|^{\bul, \bul}\,.
\end{equation}
The first degree is the total degree of the
simplicial indices. According to our conventions this degree
is nonpositive.  The second degree is the degree
in the cosimplicial index and this degree is nonnegative.
Let us denote by $\pa^s$ the part of the differential in $|\Xi_k|$
which comes from the simplicial indices and by
$\pa^c$ the part of the differential in $|\Xi_k|$ coming from the
cosimplicial structure.

Fixing the second degree we get the cochain
complex
\begin{equation}
\label{Xi-k-m}
|\Xi_k|^{\bul, m}
\end{equation}
which is the realization of the polysimplicial
set
\begin{equation}
\label{hom-m}
([n_1], [n_2], \dots, [n_k])~~ \to ~~
\hom_{\Delta}(\I_{n_1, n_2,  \dots, n_k}, [m])\,.
\end{equation}

It is not hard to see that the realization of
(\ref{hom-m}) in the category of topological spaces
is the following stretched $m$-simplex:
$$
\{(x_0, x_1, \dots, x_m)
\quad | \quad x_i\ge 0\,, \quad x_0 + x_1+ x_2 + \dots + x_m = k\}\,.
$$
Therefore for each $m$ the complex $|\Xi_k|^{\bul, m}$ has non-trivial cohomology
only in degree $0$ and
\begin{equation}
\label{H-0-Xi-m}
H^0(|\Xi_k|^{\bul, m}) = \k\,.
\end{equation}
The class which generates $H^0(|\Xi_k|^{\bul, m})$ is represented by
the map
\begin{equation}
\label{cocycle}
c\in \hom_{\Delta}(\I_{0, \dots, 0}, [m])\,,
\end{equation}
which sends all elements of $\I_{0, \dots, 0}$
to the same element $0\in [m]$\,.
All other maps in $\hom_{\Delta}(\I_{0, \dots, 0}, [m])$
are cohomologous to the cocycle (\ref{cocycle}).

It is not hard to see that
\begin{equation}
\label{Te}
\Te = \bigoplus_{q < 0} |\Xi_k|^{q,\bul} ~ \oplus ~
\pa^{s} (\, |\Xi_k|^{-1,\bul} \,)
\end{equation}
is a subcomplex of the bicomplex $|\Xi_k|$\,.

Equation (\ref{H-0-Xi-m}) implies that each term of
the quotient complex $|\Xi_k|/\Te$ is $\k$\,. Using the
explicit cocycle (\ref{cocycle}) it is not hard to see
that the quotient complex $|\Xi_k|/\Te$ is
$$
\k \stackrel{0}{\to} \k  \stackrel{{\rm id}}{\to} \k
 \stackrel{0}{\to} \k  \stackrel{{\rm id}}{\to} \k
 \stackrel{0}{\to} \dots
$$
and hence
\begin{equation}
\label{Xi-Te}
H^{\bul}(|\Xi_k|\, / \, \Te) =
\begin{cases}
\k \,, ~~ {\rm if} ~ \bul = 0\,, \cr
0\,, ~~ {\rm otherwise}\,.
\end{cases}
\end{equation}

We see from the construction that the bicomplex
$\Te$ (\ref{Te}) is acyclic in the first degree.
Therefore $\Te$ is acyclic as the total complex.

Thus $H^{\bul}(|\Xi_k|)= H^{\bul}(|\Xi_k|\, / \, \Te)$
and the proposition follows.  $\Box$

\vspace{0.5cm}

\noindent\textsc{Department of Mathematics,
University of California at Riverside, \\
900 Big Springs Drive,\\
Riverside, CA 92521, USA \\
\emph{E-mail address:} {\bf vald@math.ucr.edu}}

\vspace{0.5cm}

\noindent\textsc{Mathematics Department,
Northwestern University, \\
2033 Sheridan Rd.,\\
Evanston, IL 60208, USA \\
\emph{E-mail addresses:} {\bf tamarkin@math.northwestern.edu},
{\bf tsygan@math.northwestern.edu}}


\begin{thebibliography}{xyz}
\bibitem{Milgram} C. Balteanu, Z. Fiedorowicz, R. Schw\"anzl, and R. Vogt,
Iterated monoidal categories, Adv. Math. {\bf 176} (2003) 277--349.

\bibitem{Bat} M.A. Batanin, Symmetrisation of
$n$-operads and compactification of
real configuration spaces,  Adv. Math.  {\bf 211}, 2  (2007) 684--725;
math.CT/0606067.

\bibitem{Bat1} M.A. Batanin, Locally constant $n$-operads as higher
braided operads,  J. Noncommut. Geom.  {\bf 4}, 2  (2010) 237--263;
math.AT/0804.4165.


\bibitem{lattice} M.A. Batanin, C. Berger,  Lattice path operad and Hochschild cochains,
{\it Alpine perspectives on algebraic topology},  23--52, Contemp. Math., 504, Amer. Math. Soc., Providence, RI, 2009. 


\bibitem{Misha-Martin} M. Batanin and M. Markl,
Crossed interval groups and operations on the Hochschild cohomology,
arXiv:0803.2249.

\bibitem{BFN} D. Ben-Zvi, J.N.K. Francis, and D. Nadler,
Integral Transforms and Drinfeld Centers in Derived Algebraic Geometry,
arXiv:0805.0157.

\bibitem{Berger} C. Berger, Combinatorial models for real configuration
spaces and $E_n$-operads, Contemp. Math. {\bf 202} (1997) 37--52.

\bibitem{BF} C. Berger and B. Fresse,
Combinatorial operad actions on cochains,
{\it Math. Proc. Cambridge Philos. Soc.},
{\bf 137}, 1  (2004) 135--174.

\bibitem{BM} C. Berger and I. Moerdijk, Axiomatic homotopy theory
for operads, Comment. Math. Helv. {\bf 78}, 4 (2003) 805--831;
arXiv:math/0206094

\bibitem{Board-V} J.M. Boardmann and R.M. Vogt,
Homotopy invariant algebraic structures on topological spaces,
Springer-Verlag, Berlin, 1973, Lect. Notes in Math., Vol. 347.


\bibitem{DTT} V.A. Dolgushev, D.E. Tamarkin, and
B.L. Tsygan, The homotopy Gerstenhaber algebra
of Hochschild cochains of a regular algebra is formal,
J. Noncommut. Geom.  {\bf 1}, 1  (2007) 1--25;


\bibitem{Cepochki}  V.A. Dolgushev, D.E. Tamarkin, and
B.L. Tsygan, Formality of the homotopy calculus algebra of
Hochschild (co)chains, arXiv:0807.5117.


\bibitem{jnkf} J.N.K. Francis, Derived algebraic geometry
over ${\mathcal E}_n$-rings, PhD thesis, M.I.T. 2008.

\bibitem{Ger} M. Gerstenhaber, The cohomology structure of
an associative ring, Ann. Math., {\bf 78} (1963) 267--288.

\bibitem{GerVor} M. Gerstenhaber and A. Voronov, 
Homotopy G-algebras and moduli space operad,
Internat. Math. Research Notes {\bf 3} (1995) 141--153.


\bibitem{Ezra} E. Getzler, Cartan homotopy formulas and
the Gauss-Manin connection in cyclic homology, in
{\it Quantum deformations of algebras and their representations},
Israel Math. Conf. Proc. {\bf 7} (1993) 65--78.

\bibitem{GJ} E. Getzler and J.D.S. Jones,
Operads, homotopy algebra and iterated integrals
for double loop spaces, hep-th/9403055.


\bibitem{HKV} P. Hu, I. Kriz, and A. Voronov,
On Kontsevich's Hochschild cohomology conjecture,
{\it Compos. Math.},  {\bf 142}, 1  (2006) 143--168.


\bibitem{Gruzia} T. V. Kadeishvili, The structure of the $A(\infty)$-algebra,
and the Hochschild and Harrison cohomologies. (Russian)
Trudy Tbiliss. Mat. Inst. Razmadze. Akad. Nauk Gruzin. SSR
{\bf 91}  (1988) 19--27.


\bibitem{RK} R. Kaufmann,
A proof of a cyclic version of Deligne's conjecture
via Cacti,  Math. Res. Lett. {\bf 15}, 5  (2008) 901--921;
math.QA/0403340.


\bibitem{K} M. Kontsevich, Deformation quantization of
Poisson manifolds,  Lett. Math. Phys.,
{\bf 66} (2003) 157--216.

\bibitem{Km} M. Kontsevich,
Operads and motives in deformation quantization,
Lett. Math. Phys., {\bf 48} (1999) 35--72.

\bibitem{KS} M. Kontsevich and Y. Soibelman,
Deformations of algebras over operads and
the Deligne conjecture, Proceedings of the
Mosh\'e Flato Conference
Math. Phys. Stud. {\bf 21},
255--307, Kluwer Acad. Publ., Dordrecht, 2000.

\bibitem{K-Soi1} M. Kontsevich and Y. Soibelman,
Notes on A-infinity algebras, A-infinity categories
and non-commutative geometry, {\it Homological mirror symmetry},  
153--219, Lecture Notes in Phys., 757, Springer, Berlin, 2009;
math.RA/0606241.

\bibitem{Markl} M. Markl,
Cohomology operations and the Deligne conjecture,
Czechoslovak Math. J.  {\bf 57} (132), 1  (2007) 473--503;
math.AT/0506170.


\bibitem{May} J.P. May, Infinite loop space theory,
Bull. Amer. Math. Soc.,  {\bf 83}, 4  (1977) 456--494.


\bibitem{M-Smith} J. E. McClure and J. H. Smith,
A solution of Deligne's Hochschild cohomology conjecture,
Recent progress in homotopy theory (Baltimore, MD, 2000),
{\it Amer. Math. Soc.,  Contemp. Math.}, {\bf 293}, 153--193;
math.QA/9910126.


\bibitem{M-SmithJAMS} J. E. McClure and J. H. Smith,
Multivariable cochain operations and little $n$-cubes, 
J. Amer. Math. Soc. {\bf 16}, 3 (2003) 681--704. 


\bibitem{M-SmithAJM} J. E. McClure and J. H. Smith, 
Cosimplicial objects and little $n$-cubes. I., Amer. J. Math. 
{\bf 126},  5 (2004) 1109--1153. 

\bibitem{Dima-Proof} D. Tamarkin,
Another proof of M. Kontsevich formality theorem,
math.QA/9803025.

\bibitem{Dima-d} D. Tamarkin, Deformation complex of a
$d$-algebra is a $(d+1)$-algebra, arXiv:math/0010072.

\bibitem{dgcat} D. Tamarkin, What do DG
categories form?  Compos. Math.  {\bf 143}, 5 (2007) 1335--1358;
math.CT/0606553.

\bibitem{TT} D. Tamarkin and B. Tsygan,
Cyclic formality and index theorems.
Talk given at the Mosh\'e Flato Conference (2000),
Lett. Math. Phys.  {\bf 56}, 2  (2001) 85--97.

\bibitem{TZ} T. Tradler and M. Zeinalian,
On the cyclic Deligne conjecture,
J. Pure Appl. Alg.,  {\bf 204}, 2 (2006) 280--299.

\bibitem{Boris} B. Tsygan,  Formality conjectures for chains,
Differential topology, infinite-dimensional Lie algebras, and
applications. 261--274, {\it Amer. Math. Soc. Transl.} Ser. 2, 194,
Amer. Math. Soc., Providence, RI, 1999.

\bibitem{Bruno} B. Vallette, Manin products, Koszul duality,
Loday algebras and Deligne conjecture,  J. Reine Angew. Math. 
{\bf 620}  (2008) 105--164;
 math.QA/0609002.

\bibitem{Sasha1} A.A. Voronov, Homotopy Gerstenhaber
algebras, {\it Proceedings of the Mosh\'e Flato Conference
Math. Phys. Stud.}, {\bf 22}, 307-331.
Kluwer Acad. Publ., Dordrecht, 2000.

\bibitem{Vor} A.A. Voronov,
The Swiss-Cheese Operad, Proc. of the
conference: ``Homotopy invariant algebraic structures''
(Baltimore, MD, 1998),  365--373, {\it Contemp. Math.},
239, Amer. Math. Soc., Providence, RI, 1999.

\end{thebibliography}
\end{document}